\numberwithin{equation}{section}
\newtheorem{theorem}{Theorem}
\newtheorem{lemma}{Lemma}[section]
\newtheorem{prop}{Proposition}[section]
\newtheorem*{corollary}{Corollary}
\theoremstyle{definition}
\newtheorem*{remark}{Remark}
\newtheorem{definition}{Definition}[section]
\newcounter{appndx}
\renewcommand{\theappndx}{\Alph{appndx}}
\newcommand{\appndx}[2]{\refstepcounter{appndx}%
             \renewcommand{\thesection}{\theappndx}%
             \phantomsection\addcontentsline{toc}{section}{Appendix \theappndx: #1}
             \label{#2}\section*{Appendix \theappndx: #1} }
\def\ba#1{\begin{array}{#1}}
\def\ea{\end{array}}
\def\beq#1{\begin{equation}\label{#1}}
\def\eeq{\end{equation}}
\newcommand{\ab}[1]{\left\langle{#1}\right\rangle}
\newcommand{\dist}{\mathrm{dist}\,}
\newcommand{\dst}{\displaystyle}
\newcommand{\RR}{\mathbb{R}}
\newcommand{\NN}{\mathbb{N}}
\newcommand{\ZZ}{\mathbb{Z}}
\newcommand{\eps}{\varepsilon}
\newcommand{\talpha}{\tilde\alpha}
\newcommand{\tgamma}{\tilde\gamma}
\newcommand{\txi}{\tilde\xi}
\newcommand{\ali}{\alpha_{\infty}}
\newcommand{\Xmin}{X^{0}}
\newcommand{\Xcross}{X^{\!\times}}
\newcommand{\Tmin}{t^{0}}
\newcommand{\Tleft}{t^{\ell}}
\newcommand{\Tright}{t^{r}}
\newcommand{\eqbydef}{\stackrel{\text{\scriptsize\rm def}}{=}}
\newcommand{\Xb}{X^{**}}
\newcommand{\maplecodestyle}{\color{black}\scriptsize}
\newcommand{\mapleoutpstyle}{\color{blue}\scriptsize}
\title{Precise asymptotics with log-periodic term in an elementary optimization problem}
\author{Sergey Sadov\footnotemark[1]\footnote{E-mail: serge.sadov@gmail.com}}
\date{}
\begin{document}
\maketitle

\begin{abstract}
The function $\inf_n nx^{1/n}$ has the asymptotics $eu+e d^2(u)/(2u)+O(1/u^2)$ as $x\to\infty$, where
$u=\log x$ and $d(u)$ is the distance from $u$ to the nearest integer. We generalize this observation. 

First,
the curves $y=nx^{1/n}$ can be written parametrically
as $\log x=nt$, $y=nt$. In general, let 
$(u_n(t),v_n(t))$ be a family of parametric
curves  with asymptotics 
$u_n=n p_1(t)+q_1(t)+r_1(t)/n+O(1/n^2)$ and 
$v_n=n p_2(t)+q_2(t)+r_2(t)/n+O(1/n^2)$.
Suppose the function $p_1(t)/p_0(t)$ has a unique 
nondegenerate minimum in the parameter domain. 
It is shown that the asymptotics of their lower envelope
$v(u)=\inf_{n,t} v_n(t)$ while $u=u_n(t)$, has the asymptotics of the form $v(u)=a_0 u+a_1+\Phi(u)/u+O(1/u^2)$, where $\Phi(\cdot)$ is an affinely transformed function $d^2(\cdot)$.

Second, note that $nx^{1/n}$ is the minimum
of the sum $t_1+t_2/t_1+\dots+t_{n}/t_{n-1}$ subject to the constraint $t_n=x$. We consider a similar
asymptotic problem for the sums $t_1+t_2/(t_1+1)+\dots+t_n/(t_{n-1}+1)$. 
Let $F_n(x)$ is the minimum value of the $n$-term sum under the constraint $t_n=x$. Define $F(x)=\inf_n F_n(x)$. We show that $F(x)=eu-A+e d^2(u+b)/(2u)+O(1/u^2)$, $u=\log x$, with certain numerical constants $A$ and $b$. We present alternative forms of this optimization problem, in particular, 
a ``least action'' formulation. Also we find the asymptotics $F_n^{(p)}(x)=e\log n-A(p)+O(1/\log n)$
for the function arising from the sums with denominators of the form $t_j+p$ with arbitrary $p>0$ and establish
some facts about the function $A(p)$. 

\medskip
{\em Keywords}: AM-GM inequality, asymptotics, dynamic programming, enveloping curve, recurrence relations.

\medskip
MSC: 
26D15, % Inequalities for sums, series and integrals
26D20  % Other analytical inequalities
\end{abstract}

\section{Statement of results}
\label{sec:intro}

The main result of this work concerns the asymptotic behaviour as $x\to+\infty$ of the function
$$
 F(x)=\inf_{n\in\NN} F_n(x),
$$
where\footnote{
The function $F(x)$ in another guise (see Proposition~\ref{prop:altoptfn}) appeared in the study of a certain cyclic inequality
\cite{Sadov_2022_maxcyc}, which motivated this paper.} 
\begin{align}
&F_n(x)=\inf_{t_1,\dots,t_{n-1}\geq 0} S(t_1,\dots,t_{n-1},x),
\label{op_main}
\\[1ex]
%\left(
& S(t_1,\dots,t_n)= t_1+\frac{t_2}{t_1+1}+\dots+\frac{t_{n-1}}{t_{n-2}+1}+\frac{t_n}{t_{n-1}+1}
\label{S1}
% \right).
\end{align}
In other words, $C=F(x)$ is the best constant, independent of $n$ and $\{t_j\}$, in the inequality
$$
 t_1+\frac{t_2}{t_1+1}+\dots+\frac{t_{n-1}}{t_{n-2}+1}+\frac{x}{t_{n-1}+1}\geq C.
$$ 

We will use the notation
$$
 \ab{x}=\dist(x,\ZZ).
$$
The function $x\mapsto\ab{x}$ is a $1$-periodic piecewise-linear, continuous function oscillating between $0$ and $1/2$.

\begin{theorem}
\label{thm:main}
Let
$
 u=\log x.
$
There exist numerical constants
$$
\ba{l}
 A\approx 1.7046560372, % 72-> 718
\\
 b\approx 0.6973885601,  % 01 ->0098
\ea
$$
such that 
the function $F(x)$
has the asymptotics 
\begin{equation}
\label{asf}
F(x)=eu-A+\frac{e}{2}\,\frac{{\ab{u+b}}^2}{u}
+O\left(\frac{1}{u^2}\right)
\end{equation}
as $x\to\infty$.
\end{theorem}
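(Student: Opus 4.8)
The plan is to realise $F(x)$ as the lower envelope of an explicit one-parameter family of curves coming from the stationarity (dynamic-programming / Euler--Lagrange) conditions for \eqref{op_main}, and then to invoke the general envelope theorem of Part~1. Writing $\tau_j=t_j+1$ (so $\tau_0=1$, $\tau_n=x+1$), the sum \eqref{S1} becomes $S=\sum_{j=1}^n\tau_j/\tau_{j-1}-\sum_{j=1}^n 1/\tau_{j-1}$, and the equations $\partial S/\partial t_j=0$ ($1\le j\le n-1$) read $\tau_{j+1}\tau_{j-1}=\tau_j^2+\tau_{j-1}$. This second-order recurrence has a one-parameter family of solutions indexed by $\tau_1=:t$, which in terms of the ratios $\rho_j:=\tau_j/\tau_{j-1}$ takes the first-order form $\rho_{j+1}=\rho_j+1/\tau_j$, $\tau_{j+1}=\tau_j\rho_{j+1}$, $\rho_1=\tau_1=t$, with the telescoping value $S=\rho_1+\sum_{j=1}^{n-1}\rho_j-1$ along any stationary trajectory. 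One must then check that for large $x$ the infimum in \eqref{op_main} is attained at such an interior stationary point for the $n$ that matter for $F=\inf_n F_n$: $S$ is coercive, minima on the boundary $\{t_j=0\}$ split off and equal $F_{n'}(x)$ with $n'<n$ (hence are already accounted for), and at an $n$ realising the infimum the strict comparisons with its neighbours exclude the boundary. Thus $F(x)=\inf\{\,\rho_1+\sum_{j=1}^{n-1}\rho_j-1 : n\in\NN,\ t\ge1,\ \log(\tau_n(t)-1)=u\,\}$.

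Next I would study the recurrence as $j\to\infty$. Since $\tau_j$ grows geometrically, $\sum 1/\tau_j$ converges and $\rho_j\nearrow\rho_\infty(t):=t+\sum_{j\ge1}1/\tau_j(t)$ at a geometric rate, smoothly and uniformly on compact subsets of $(1,\infty)$. Setting $p_1(t)=\log\rho_\infty(t)$ and $p_2(t)=\rho_\infty(t)$, this gives
$$
u_n(t):=\log(\tau_n(t)-1)=n\,p_1(t)-\gamma_1(t)+O\!\big(\rho_\infty(t)^{-n}\big),\qquad v_n(t):=\rho_1+\sum_{j=1}^{n-1}\rho_j-1=n\,p_2(t)+q_2(t)+O\!\big(\rho_\infty(t)^{-n}\big),
$$
where $\gamma_1(t)=-\sum_{j\ge1}\log\!\big(\rho_j(t)/\rho_\infty(t)\big)>0$ and $q_2(t)=(t-1)-\rho_\infty(t)-\sum_{j\ge1}\big(\rho_\infty(t)-\rho_j(t)\big)$ are both smooth in $t$; the remainders are exponentially small in $n$ (so in particular $O(1/n^2)$, with vanishing $1/n$-terms). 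Hence $F(x)$ is the lower envelope of a family of exactly the type treated in Part~1.

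The structural point is a change of parameter. On any subinterval where $\rho_\infty'\ne0$ one may take $w:=p_1(t)=\log\rho_\infty(t)$ as parameter, and then $\hat u_n(w)=nw+\hat q_1(w)+O(e^{-cn})$, $\hat v_n(w)=ne^{w}+\hat q_2(w)+O(e^{-cn})$: the leading coefficients $\hat p_1(w)=w$, $\hat p_2(w)=e^{w}$ are exactly those of the classical family $y=nx^{1/n}$, so $\hat p_2/\hat p_1=e^{w}/w$ has a unique nondegenerate minimum at $w=1$ with value $e$. Part~1 then yields $F(x)=eu-A+\tfrac{e}{2}\,\ab{u+b}^2/u+O(1/u^2)$, with the same coefficient $e/2$ of the log-periodic term as in the classical case, $A=e\,\hat q_1(1)-\hat q_2(1)$, and $b$ given by the corresponding Part~1 formula in terms of $\hat q_1,\hat q_2$ and their first derivatives at $w=1$. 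The numerical values of $A$ and $b$ then follow by iterating the recurrence along the stationary trajectory with $\rho_\infty=e$ (it converges geometrically) and summing the series that define $\gamma_1$, $q_2$ and their $t$-derivatives there.

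The main obstacle is that $p_1(t)=\log\rho_\infty(t)$ is \emph{not} monotone on $[1,\infty)$ --- it descends slightly below $1$ and then increases --- so for $n$ near $u$ there are two stationary trajectories with $\rho_\infty=e$, one on each branch of this fold. One has to show that exactly one of them realises $F_n(x)$ (the other giving a saddle or a non-global local minimum of $S$) and that its curve family lies strictly below the other's by a positive constant; equivalently, to compare the constants $e\gamma_1(t)+q_2(t)$ at the two trajectories and verify that the subdominant curves stay above $F(x)+\mathrm{const}+O(1/u)$. Isolating the dominant branch is a finite numerical computation; carrying this out rigorously, together with the verification that the optimum in \eqref{op_main} is an interior stationary point and that $\rho_\infty'\ne0$ at the relevant trajectory (which legitimises both the reparametrisation and the transfer of nondegeneracy into the $w$-variable), is the delicate part. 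Once the dominant branch is isolated and restricted to a neighbourhood of its $\rho_\infty=e$ point, the recurrence asymptotics and the appeal to Part~1 are routine.
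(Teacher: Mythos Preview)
Your route is essentially the paper's: the change $\tau_j=t_j+1$, ratios $\rho_j=\tau_j/\tau_{j-1}$, the recursion $\rho_{j+1}=\rho_j+1/\tau_j$, and the asymptotic expansions of $\log\tau_n$ and of the value along stationary trajectories are exactly the paper's $\xi_n$, $\alpha_n$, $\eta_n$, and Proposition~\ref{prop:asphipsi}. Your reparametrisation $w=\log\rho_\infty(t)$ is a cosmetic variant of applying Theorem~\ref{thm:paramcurves} with $p_0(t)=\log\alpha_\infty(t)$; it makes the value $a_3=e/2$ transparent (as in Lemma~\ref{lem:a3}), which is fine.

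There is, however, a real gap. Theorem~\ref{thm:paramcurves} yields $\Phi(u)=a_2+a_3\ab{\,\cdot\,}^2$ with
\[
a_2\;=\;-\frac{\bigl(\hat q_2'(1)-e\,\hat q_1'(1)\bigr)^2}{2e},
\]
and the statement you are proving has \emph{no} constant $a_2/u$ term. You assert the final formula directly, but nowhere argue that $\hat q_2'(1)=e\,\hat q_1'(1)$ (equivalently, in the paper's notation, $\psi'(t_b)=e\,\phi'(t_b)$). This is not automatic from the envelope theorem or from the reparametrisation; it is a specific identity about the series defining $\phi$ and $\psi$. In the paper it is Lemma~\ref{lem:Dzeta}, proved via the Abel summation identity
\[
\sum_{j\ge 1}\delta_j(t)\,\frac{\alpha_j'(t)}{\alpha_j(t)}\;=\;1-\alpha_\infty'(t),
\]
which in turn relies on $\delta_j-\delta_{j+1}=1/\xi_j$. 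Without this step your asymptotics would read $eu-A+(a_2+\tfrac{e}{2}\ab{u+b}^2)/u+O(u^{-2})$, which is not the stated theorem.

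A smaller remark on the two-branch obstacle: you describe selecting the dominant branch as ``a finite numerical computation'' comparing the constants on the two $\rho_\infty=e$ trajectories. The paper instead gives an analytical argument (Proposition~\ref{prop:pmbranches}) valid for every $n$: if $\xi_n(t^-)=\xi_n(t^+)$ with $t^-<t^+$ then $\xi_{n-1}(t^-)\ne\xi_{n-1}(t^+)$ (else the reverse recursion forces $t^-=t^+$), and monotonicity of $t^0_n$ in $n$ fixes the sign; integrating $d\eta_n/d\xi_n=1/\xi_{n-1}$ then gives $\eta_n(t^+)<\eta_n(t^-)$. This is cleaner than a numerical constant comparison and handles the whole family uniformly, not just the limit at $t_b$.
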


Theorem~\ref{thm:main} will be better understood 
in the context of two 
%simpler 
theorems stated below: 
Theorem~\ref{thm:AMGM}, which is a simpler result of the same kind, and Theorem~\ref{thm:paramcurves} of a technical nature, which
shows that a periodic (up to a small error)
remainder term appears in a class of optimization problems
concerning the lower envelope of parametric curves. 
The asymptotic formula \eqref{asf} will be eventually obtained through Theorem~\ref{thm:paramcurves}.

\medskip
Let us consider a simpler analogue of the functions
$F_n(x)$: 
\begin{equation}
\label{ep_AM-GM}
F^{(0)}_n(x)=\inf_{t_1,\dots,t_{n-1}>0} \left(
 t_1+\frac{t_2}{t_1}+\dots+\frac{t_{n-1}}{t_{n-2}}+\frac{x}{t_{n-1}}
 \right).
\end{equation}

The definition of $F^{(0)}_n(x)$ can be written as
$$
 F^{(0)}_n(x)=\inf_{a_1,\dots,a_{n}>0}
(a_1+\dots+a_n)
\quad\text{subject to $\;a_1\cdot\dots\cdot a_n=x$}.
$$

One recognizes the constrained optimization problem associated with the inequality between the arithmetic and geometric means, hence
$$
 F^{(0)}_n(x)=n x^{1/n}.
$$ 

\begin{theorem}
\label{thm:AMGM}
Let $u=\log x$. The function
$$
 F^{(0)}(x)=\inf_{n\in\NN} F^{(0)}_n(x)
$$
has the asymptotics
\begin{equation}
\label{asf0}
 F^{(0)}(x)=eu+\frac{e}{2}\,\frac{\ab{u}^2}{u}+O\left(\frac{1}{u^2}\right)
\end{equation}
as $x\to\infty$.
\end{theorem}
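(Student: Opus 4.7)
\noindent\textbf{Proof plan for Theorem~\ref{thm:AMGM}.}
The plan is to reduce the two-variable infimum $\inf_n F_n^{(0)}(x)$ to a one-variable minimization over integers $n$ and then to analyze that minimization by Taylor expansion around the continuous optimum.

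First, I will note that, by the change of variables $a_j=t_j/t_{j-1}$ (with $t_0=1$, $t_n=x$), the definition of $F_n^{(0)}(x)$ is exactly the constrained AM--GM problem $\inf\{a_1+\cdots+a_n : a_1\cdots a_n=x,\ a_j>0\}$, whose value is $nx^{1/n}=ne^{u/n}$. Thus $F^{(0)}(x)=\inf_{n\in\NN} g(n)$, where $g(n):=ne^{u/n}$ is viewed as a function of a real variable $n>0$. I would then compute $g'(n)=e^{u/n}(1-u/n)$ and $g''(n)=e^{u/n}\,u^2/n^3>0$, so $g$ is strictly convex on $(0,\infty)$ with unique global minimum at $n=u$ and minimum value $g(u)=eu$. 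Strict convexity immediately localizes the integer minimum: for $u$ large enough that $\lfloor u\rfloor\ge 1$, one has $\min_{n\in\NN} g(n)=\min\bigl(g(\lfloor u\rfloor),g(\lceil u\rceil)\bigr)$.

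Next I would Taylor-expand $g$ around $n=u$. Writing $n=u+s$ and using $e^{u/(u+s)}=e\cdot e^{-s/(u+s)}$, a routine expansion of the exponential and of $1/(1+s/u)$ gives
\[
 g(u+s)=(u+s)\,e^{u/(u+s)}=eu+\frac{e\,s^2}{2u}+O\!\left(\frac{s^3}{u^2}\right),
\]
with the implied constant in $O(\cdot)$ uniform for $|s|\le 1$; this is the only nontrivial computation. Applying this with $s=s_{-}:=u-\lfloor u\rfloor\in[0,1)$ and $s=s_{+}:=\lceil u\rceil-u\in(0,1]$ (and $s_{+}=0$ when $u\in\ZZ$) yields
\[
 g(\lfloor u\rfloor)=eu+\frac{e\,s_{-}^{\,2}}{2u}+O\!\left(\tfrac{1}{u^{2}}\right),\qquad
 g(\lceil u\rceil)=eu+\frac{e\,s_{+}^{\,2}}{2u}+O\!\left(\tfrac{1}{u^{2}}\right).
\]

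Finally, since $\ab{u}=\min(s_{-},s_{+})$, taking the smaller of the two values above gives
\[
 F^{(0)}(x)=eu+\frac{e}{2}\,\frac{\ab{u}^{2}}{u}+O\!\left(\frac{1}{u^{2}}\right),
\]
which is \eqref{asf0}. The only delicate point to verify is that taking the minimum of two quantities each carrying an $O(1/u^{2})$ error still gives an $O(1/u^{2})$ error: the error in the minimum is bounded by the maximum of the individual errors, so this is automatic. I expect the main (and only) obstacle to be the bookkeeping in the Taylor expansion needed to confirm that the cubic terms in $s$ really contribute only $O(1/u^{2})$ uniformly for $|s|\le 1$; convexity and the AM--GM identification take care of everything else.
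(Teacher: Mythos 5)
Your proof is correct and follows the same core strategy as the paper's: view $F_n^{(0)}(x)=n e^{u/n}$ as a function of $n$, locate the optimal real value $n=u$, expand to second order, and identify the residual quadratic term with $\ab{u}^2$. The computations match (the paper writes $F_n^{(0)}(x)=u\varphi(u/n)$ with $\varphi(t)=t^{-1}e^t$ and expands $\varphi$ near $1$, which is your expansion of $g(u+s)$ in disguise).

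The one place where your argument diverges, and in my view improves on the paper, is the localization of the optimal integer. The paper finds the crossover points $u_n=n(n+1)\log(1+n^{-1})=n+\tfrac12+O(1/n)$ at which $F^{(0)}_n$ and $F^{(0)}_{n+1}$ trade places, restricts to $u\in[u_{n-1},u_n]$, and then needs the slightly awkward observation that the deviation $s=u-n$ equals $\ab{u}$ only up to an $O(1/n)$ error near half-integers. Your convexity argument ($g''(n)=e^{u/n}u^2/n^3>0$) shows outright that the integer minimizer must be $\lfloor u\rfloor$ or $\lceil u\rceil$, so $s=\pm\ab{u}$ exactly (not merely up to $O(1/n)$), and the final step of taking the minimum of the two Taylor-expanded values is cleaner. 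Both routes give the same result with the same error order; yours avoids the extra bookkeeping around the interval endpoints.
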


\begin{figure}
\begin{picture}(260,175)
\put(0,0){\includegraphics[scale=0.5]{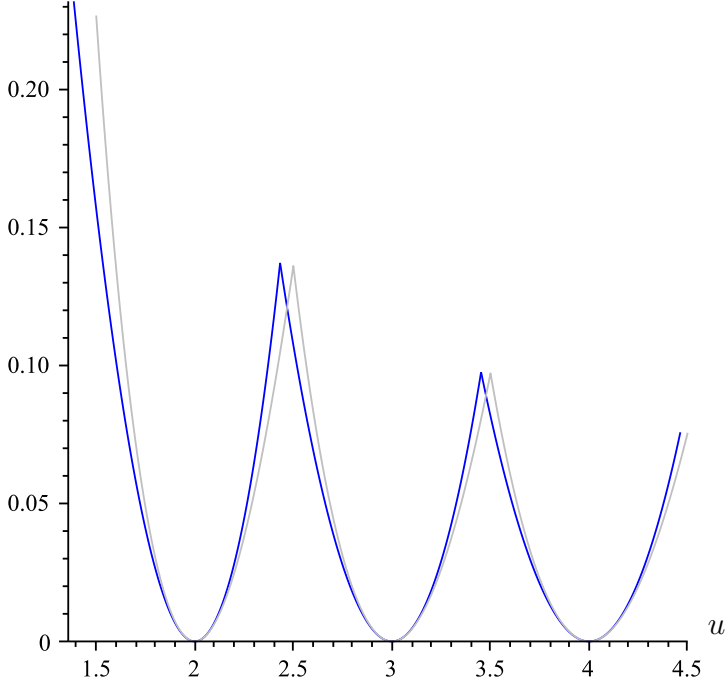}}
% Origing: file xieta.mw
\put(265,16){$u$}
\end{picture}
\caption{Illustration of Theorem~\ref{thm:AMGM}. Blue line: the function $u\mapsto F^{(0)}(x)-e u$, $u=\log x$. Gray line: %vs 
the correction term $(e/2){\ab{u}}^2/u$.}
\label{fig:f0corr}
\end{figure}

The asymptotic approximation \eqref{asf0} is illustrated in Fig.~\ref{fig:f0corr}.

%The other theorem, on surface, looks very differently, but it shows how periodic remainder terms in an asymptotics arise in some rather general situation. We will eventually reduce the problem of estimating $F_n(x)$ to this form.

\medskip
Theorems~\ref{thm:main} and \ref{thm:paramcurves} both
deal with a function defined as infimum over a family.
For instance, the graph of the function $F^{(0)}(x)$ is the lower envelope of the curves $y=nx^{1/n}$, which can be written parametrically as $\log x=nt$, $y=ne^t$. From this point of view Theorem~\ref{thm:AMGM} is an easy consequence of our next theorem. To keep things simple where possible, we will prove Theorem~\ref{thm:AMGM} directly; however, Theorem~\ref{thm:paramcurves} will be fully relevant for the proof of Theorem~\ref{thm:main}.
The relevance of the general asymptotic pattern
\eqref{asfabs} is apparent already.

\begin{theorem}
\label{thm:paramcurves}
Consider a family of parametric curves
$u=\xi_n(t)$, $v=\eta_n(t)$, $t\in I$, where $I$ is a segment of the real line. 
Suppose the functions $\xi_n(t)$, $\eta_n(t)$ have the asymptotic behaviour%
%\footnote{This form, without terms of order $n^{-1}$, will suffice in our application, where the remainder will be exponentially small. The presence of terms of order $n^{-1}$ will only change the formulas for coefficients in \eqref{asgenPhi}, see Appendix~\ref{app:paramcurves}.}
% 
$$
\ba{l}
\dst
 \xi_n(t)=p_0(t) n+ q_0(t)+ r_0(t)n^{-1}+ O(n^{-2}),
\dst
\\
 \eta_n(t)=p_1(t) n+ q_1(t)+ r_1(t)n^{-1}+ O(n^{-2})
\ea
$$
as $n\to\infty$, uniformly in $t\in I$.

Let $v=f(u)$ be the lower envelope of the curves so defined. That is, given $u$, we determine the set%
\footnote{This set is nonempty if $u$ is sufficiently large.}
of those $n$ for which the equation $\xi_n(t)=u$ has a solution and define
$$
 f(u)=\inf_{(n,t):\, \xi_n(t)=u} \eta_n(t).
$$

Denote
$$
 \beta(t)=\frac{p_1(t)}{p_0(t)}, 
\qquad 
\delta(t)=p_0(t)r_1(t)-p_1(t)r_0(t).
$$
We make the following assumptions.

\smallskip
{\rm(i)} The 
%functions $\xi_n(\cdot)$, $\eta_n(\cdot)$ and the 
coefficients $p_i(\cdot)$, $q_i(\cdot)$, $r_i(\cdot)$ are of class $C^2(I)$. 

\smallskip
{\rm(ii)}  $p_0(t)>0$, $p_1(t)>0$, $p_0(t)'>0$ on $I$.

\smallskip
{\rm(iii)} The function $\beta(t)$ has the unique point of minimum $t=t_0$ on $I$ and
$$
\beta(t)=b_0+\frac{b_2}{2}(t-t_0)^2+o(t-t_0)^2
\quad \text{near $t_0$}.  
$$

Then %the asymptotic behaviour of the function $f(u)$ is described as follows:
\beq{asfabs}
 f(u)=a_0 u+a_1+\frac{\Phi(u)}{u}+O\left(\frac{1}{u^2}\right)
\quad\text{as $u\to\infty$},
\eeq
where
\beq{coefa0a1}
 a_0=b_0,
\qquad
 a_1=q_1(t_0)-b_0 q_0(t_0),
\eeq
and
%\beq
%\ba{c}\dst
\begin{align}
& \Phi(u)=a_2+a_3\ab{\frac{u-q_0(t_0)}{p_0(t_0)}}^2,
\label{asgenPhi}
%\eeq
%and
%\beq{coefa2a3}
\\[2ex]\dst
& a_2
=-\frac{(q_1'(t_0)-b_0 q_0'(t_0))^2}{2b_2}+\delta(t_0),
\qquad
a_3=\frac{b_2}{2}\,\left(\frac{(p_0(t_0))^2}{p_0'(t_0)}\right)^2.
\label{coefa2a3}
%\eeq
\end{align}
\end{theorem}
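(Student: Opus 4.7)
\medskip
\noindent\textbf{Proof plan.} The strategy is to invert the constraint $\xi_n(t)=u$, reduce to a one-parameter family of functions, then minimize the resulting asymptotic expansion over the integer $n$. By~(ii), for large $n$ the map $\xi_n(\cdot)$ is strictly increasing on $I$, so there is a unique $t_n(u)\in I$ with $\xi_n(t_n(u))=u$, and $f(u)=\inf_n\phi_n(u)$, where $\phi_n(u):=\eta_n(t_n(u))$. Write $t_n(u)=t_0+s$ and set $\nu:=(u-q_0(t_0))/p_0(t_0)$. Taylor-expanding the constraint about $t_0$ and using $p_0'(t_0)>0$ yields
\[
    s=\frac{p_0(t_0)^2}{p_0'(t_0)\,u}(\nu-n)+O(1/u^2),
\]
valid as soon as $n$ is within $O(1)$ of $\nu$ (in which case $s=O(1/u)$).

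The central computation is the expansion of $\phi_n(u)$. Using $p_1(t)-b_0p_0(t)=\tfrac{b_2}{2}\,p_0(t_0)(t-t_0)^2+O((t-t_0)^3)$ (a consequence of $\beta'(t_0)=0$ and $\beta''(t_0)=b_2$), the bulk term $b_0\cdot n p_0(t_0+s)$ inside $\eta_n$ is eliminated via the constraint $\xi_n(t_0+s)=u$; one also replaces $1/n$ by $p_0(t_0)/u+O(1/u^2)$, using $n=\Theta(u)$. After this cancellation,
\[
    \phi_n(u)=b_0u+a_1+H's+\frac{b_2 u}{2}s^2+\frac{\delta(t_0)}{u}+O\!\left(\frac{1}{u^2}\right),
\]
with $H':=q_1'(t_0)-b_0 q_0'(t_0)$ and $a_1$ as in~\eqref{coefa0a1}.

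Completing the square in $s$ with $s^*:=-H'/(b_2u)$ rewrites this as
\[
    \phi_n(u)=b_0u+a_1+\frac{a_2}{u}+\frac{b_2 u}{2}(s-s^*)^2+O(1/u^2),
\]
where $a_2=-H'^{\,2}/(2b_2)+\delta(t_0)$. As $n$ ranges over the integers near $\nu$, the corresponding $s$-values form (to leading order) an arithmetic progression with step $p_0(t_0)^2/(p_0'(t_0)u)$, so the minimum of $(s-s^*)^2$ over integer $n$ equals
\[
    \bigl(p_0(t_0)^2/(p_0'(t_0)u)\bigr)^2\,\ab{\nu+\tfrac{H'p_0'(t_0)}{b_2 p_0(t_0)^2}}^2.
\]
Multiplying by $b_2u/2$ produces the oscillating correction $a_3\ab{\,\cdot\,}^2/u$ with $a_3$ exactly as in~\eqref{coefa2a3}, and adding the constant part gives~\eqref{asfabs}--\eqref{asgenPhi} (the $u$-independent shift in the argument of $\ab{\,\cdot\,}$ that arises is the analogue of the constant $b$ appearing in Theorem~\ref{thm:main}).

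The main technical obstacle is justifying the localization $|n-\nu|=O(1)$: for such $n$ the $C^2$-uniform expansions provided by (i) apply and yield $\phi_n(u)-b_0u=O(1)$, whereas for $n$ outside this band the corresponding $t_n(u)$ is forced away from $t_0$ by monotonicity of $p_0$, so $\beta(t_n(u))\geq b_0+\gamma$ for some $\gamma>0$ by the strict minimality in~(iii), and $\phi_n(u)\geq(b_0+\gamma)u-C$ exceeds the target by $\Omega(u)$. Once this localization is in hand, the various tail contributions ($O(s^3)$ from Taylor, $O(s/n)$ and $O(1/n^2)$ from the hypothesis) aggregate to a uniform $O(1/u^2)$ with essentially no further effort, thanks to $s=O(1/u)$ and $n=\Theta(u)$ in the localized regime.
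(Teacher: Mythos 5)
Your proof follows essentially the same route as the paper's: solve the constraint for the parameter near $t_0$, expand $\eta_n(t_n(u))$ using $p_1-b_0p_0=\tfrac{b_2}{2}p_0(t_0)(t-t_0)^2+\cdots$, complete the square, and recognize that the admissible values of $s$ form an arithmetic progression of step $p_0(t_0)^2/(p_0'(t_0)u)$, so the minimum over integers gives an $\ab{\,\cdot\,}^2$ term. The paper organizes this slightly differently (it first computes the ``continuous'' minimum $v_1=\min_t w(u,t)$ with $\nu$ allowed to be non-integer, then corrects for integrality), but the substance is the same.

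There is one point you should think through more carefully, and in fact you have located a subtlety that the paper's proof also glosses over. You correctly derive the argument of $\ab{\,\cdot\,}$ to be
\[
\nu + \frac{H'\,p_0'(t_0)}{b_2\,p_0(t_0)^2}
=\frac{u-q_0(t_0)}{p_0(t_0)}+\frac{H'\,p_0'(t_0)}{b_2\,p_0(t_0)^2},
\qquad H':=q_1'(t_0)-b_0q_0'(t_0),
\]
whereas Eq.~\eqref{asgenPhi} as printed has only $\ab{(u-q_0(t_0))/p_0(t_0)}^2$, \emph{without} the second term. The two agree precisely when $H'=0$. Your parenthetical remark that this ``shift is the analogue of $b$'' is not right: the constant $b$ in Theorem~\ref{thm:main} corresponds to $-q_0(t_0)/p_0(t_0)$ and is already built into the printed formula. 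The extra $H'p_0'(t_0)/(b_2p_0(t_0)^2)$ term is a genuinely additional shift, and it vanishes in the application to Theorem~\ref{thm:main} only because Lemma~\ref{lem:Dzeta} gives $\zeta'(t_b)=H'=0$ there. (Indeed, if you inspect the paper's own proof, the step that replaces $\ab{\nu(u,t_1)+O(u^{-1})}$ by $\ab{(u-q_0(t_0))/p_0(t_0)}$ on the basis of $t_1-t_0=O(u^{-1})$ ignores that $\partial_t\nu(u,t)=O(u)$, so $\nu(u,t_1)-\nu(u,t_0)=\frac{H'p_0'(t_0)}{b_2p_0(t_0)^2}+O(u^{-1})$ --- an $O(1)$ discrepancy, not $O(u^{-1})$, unless $H'=0$.) You should state plainly that your argument yields the corrected phase, rather than pretending it coincides with~\eqref{asgenPhi}. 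Beyond this, the localization step $|n-\nu|=O(1)$ is handled a bit hastily: the penalty for $|n-\nu|=M$ with $1\ll M=o(u)$ is $\Theta(M^2/u)$, not $\Omega(u)$; you get $\Omega(u)$ only once $|t_n(u)-t_0|$ is bounded below. Both regimes are easy to dispatch, but they need to be distinguished.
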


Proof of Theorem~\ref{thm:paramcurves} is given in Appendix~\ref{app:paramcurves}.

\medskip
Our final theorem concerns an interpolation between
Theorems~\ref{thm:main} and~\ref{thm:AMGM}.

Indeed, it is natural to treat the functions $F^{(0)}(x)$
and $F(x)=F^{(1)}(x)$ as members of a one-parameter family
of functions
$$
 F^{(p)}(x)=\inf_{n\in\NN} F^{(p)}_n(x)
$$
with any $p\geq 0$,
where
$$
 F^{(p)}_n(x)=\inf_{t_1,\dots,t_{n-1}\geq 0,\;t_n=x} 
S^{(p)}_n
%\left(
(t_1,\dots,t_n),
% t_1+\frac{t_2}{t_1+p}+\dots+\frac{t_{n-1}}{t_{n-2}+p}+\frac{x}{t_{n-1}+p}
% \right).
$$
%Denote
\begin{equation}
\label{Snpt}
 S^{(p)}_n(t_1,\dots,t_n)=t_1+\sum_{j=2}^{n}\frac{t_j}{t_{j-1}+p}.
\end{equation}
In particular,
$$
 F^{(p)}_1(x)=x,\qquad F^{(p)}_2(x,p)=\inf_{t\geq 0}\left(t+\frac{x}{t+p}\right)=\begin{cases}
2\sqrt{x}-p, \quad x\geq p^2,\\
x/p, \qquad x\leq p^2.
\end{cases}
$$

We will not chase the asymptotics of $F^{(p)}(x)$ as precisely as we did in Theorems~\ref{thm:main}--\ref{thm:AMGM} but rather
focus on the constant term in the asymptotics. In particular, we determine its behaviour as $p\to +0$. We also reveal that the parameter value $p=1$ separates the regions with different analytic form of $F^{(p)}$: it is much simpler for $p>1$. This latter fact gives an additional support to our attention to the function $F(x)=F^{(1)}(x)$ as the main subject of this paper. 

\begin{theorem}
\label{thm:genpar}
Let $u=\log x$. 

\smallskip
{\rm(a)} For any $p>0$ 
\begin{equation}
\label{Fpasym}
 F^{(p)}(x)=eu-A(p)+O(u^{-1}) \quad\text{as $x\to\infty$}.
\end{equation}
The function $A(\cdot)$ is increasing.

\smallskip
{\rm(b)} If $p>1$, then
\begin{equation}
\label{Fpgt1}
 F^{(p)}(x)=F\left(\frac{x}{p}\right),
\end{equation}
so $A(p)=A+e\log p$.

\smallskip
{\rm(c)} If $0<p<1$, then
the function $A(p)$ satisfies the functional equation
\begin{equation}
\label{feq-Ap}
 A(p)=\max_{1\leq u\leq e} \left(A\left(\frac{p}{u}\right)-u+e\log u\right)+p.
\end{equation}
The asymptotics of $A(p)$ as $p\to +0$ is
$$
A(p)=\frac{p}{1-e^{-1}}+o(p).
$$
\end{theorem}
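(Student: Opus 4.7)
\emph{Part (a).} The approach mirrors the proof of Theorem~\ref{thm:main}. Writing the Bellman recursion $F^{(p)}_n(x) = \inf_{t \geq 0}[F^{(p)}_{n-1}(t) + x/(t+p)]$ with $F^{(p)}_1(x) = x$ (by isolating the last term of $S^{(p)}_n$), the Lagrange equations for the optimal $(t_1,\dots,t_{n-1})$ generate a family of extremal curves parametrized as $(\xi_n(t),\eta_n(t))$; their expansions satisfy the hypotheses of Theorem~\ref{thm:paramcurves}, with leading coefficients $p_0,p_1$ independent of $p$, forcing $a_0=e$ and defining the constant $a_1=:-A(p)$. Monotonicity of $A(\cdot)$ is immediate from the strict monotonicity of $S^{(p)}_n$ in $p$ (for fixed positive $(t_j)$), which descends to $F^{(p)}_n$ and to $F^{(p)}$.

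\emph{Part (b).} \emph{Upper bound:} restricting the minimization in $F^{(p)}_n(x)$ to $\{t_1=0\}$ and substituting $t_j=ps_{j-1}$ for $j\geq 2$ collapses the sum into $S^{(1)}_{n-1}(s_1,\dots,s_{n-1})$ with $s_{n-1}=x/p$, so $F^{(p)}_n(x)\leq F_{n-1}(x/p)$ and hence $F^{(p)}(x)\leq F(x/p)$. \emph{Lower bound:} the function $\phi(x):=F(x/p)$ is a fixed point of the Bellman operator $T_p\psi(x):=\inf_{t\geq 0}[\psi(t)+x/(t+p)]$ (via $t=ps$ and the $p=1$ identity $F(y)=\inf_s[F(s)+y/(s+1)]$, valid since the infimum is bounded above by $y$). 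Since $\phi(x)\leq x/p\leq x = F^{(p)}_1(x)$ for $p\geq 1$---the only place where the hypothesis is used---monotone induction along $F^{(p)}_n=T_p^{n-1}(\mathrm{id})$ yields $F^{(p)}_n\geq\phi$, hence $F^{(p)}\geq\phi$. Substituting $F^{(p)}(x)=F(x/p)$ into the asymptotic of Theorem~\ref{thm:main} gives $A(p)=A+e\log p$.

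\emph{Part (c).} The substitution $t_j=(t_1+p)\tau_{j-1}$ for $j\geq 2$ in the definition of $F^{(p)}_n(x)$ recasts the inner sum as $S^{(q)}_{n-1}(\tau_1,\dots,\tau_{n-1})$ with $q=p/(t_1+p)$ and $\tau_{n-1}=x/(t_1+p)$. Taking $\inf_n$ and writing $s=t_1+p$ yields the scaling recursion
$$
F^{(p)}(x)=\min\bigl(x,\;\inf_{s\geq p}\bigl[(s-p)+F^{(p/s)}(x/s)\bigr]\bigr).
$$
Substituting the part-(a) asymptotic $F^{(q)}(y)=e\log y-A(q)+o(1)$ on the right, the $eu$ terms cancel and matching the $O(1)$ term gives $A(p)=p+\sup_{s\geq p}[A(p/s)-s+e\log s]$. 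A derivative analysis using monotonicity of $A$ shows this supremum lies in $[1,e]$: the derivative $-A'(p/s)\,p/s^2+(e-s)/s$ is positive at $s=1$ for small $p$, non-positive at $s=e$, strictly negative for $s>e$, while values on $[p,1)$ are dominated by the value at $s=1\in[1,e]$. This establishes \eqref{feq-Ap}. For $p\to 0^+$, the monotone convergence $F^{(p)}\searrow F^{(0)}$ together with Theorem~\ref{thm:AMGM} gives $A(p)\to A(0)=0$, so the maximizer satisfies $u_*\to e$; substituting $u=e$ in the functional equation yields $A(p)=p+A(p/e)+o(p)$, and iterating produces the geometric series $A(p)=p\sum_{k\geq 0}e^{-k}+o(p)=p/(1-e^{-1})+o(p)$.

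\emph{Main obstacle.} The principal technical difficulty is in part (c): making the remainder in $F^{(p/s)}(x/s)=e\log(x/s)-A(p/s)+o(1)$ uniform over $s$ in a suitable compact subset of $(0,\infty)$, so that the substitution into the scaling recursion legitimately yields a self-contained functional equation for $A(p)$. A secondary concern is that the derivative argument locating the supremum in $[1,e]$ presumes regularity of $A(\cdot)$ beyond the monotonicity already in hand; this must be bootstrapped from the structure of the finite-$n$ minimization, e.g.\ by showing $A$ is concave or at least locally Lipschitz.
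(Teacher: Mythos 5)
Your proposal is substantively correct and follows the same route as the paper for parts (a) and (c) and for the upper bound in (b): everything hinges on the scaling identity $S^{(p)}_n(t_1,\dots,t_n)=t_1-\tilde t_1+S^{(p/q)}_n(\tilde t_1,\dots,\tilde t_n)$ with $\tilde t_j=t_j/q$, applied with $q=p$ (part b) and with $q=t_1+p$ (part c). Your lower bound in part (b) is the one genuine departure. You exhibit $\phi(x)=F(x/p)$ as a fixed point of the Bellman operator $T_p$ and propagate $\phi\le F^{(p)}_1=\mathrm{id}$ by monotone iteration; the paper instead reads the inequality off the scaling identity directly, noting that for $q=p>1$ the residual $t_1(1-1/p)$ is nonnegative, so $S^{(p)}_n\ge S_n(\tilde t)$ and hence $F^{(p)}_n(x)\ge F_n(x/p)$ at once. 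Both correctly isolate where $p>1$ enters; the paper's version is shorter, while yours is more operator-theoretic and would adapt to Lagrangians that are not linear in the first argument. In (c) the paper restricts the maximum to $u\in[1,e]$ by combining the inequality $F^{(p)}_n(x)\le F^{(p/q)}_n(x/q)$ for $q\le1$ (from the same scaling identity) with the monotone decrease of $e\log u-u$ on $u>e$, whereas your route via the derivative of $u\mapsto A(p/u)-u+e\log u$ silently presumes more regularity of $A$ than the monotonicity in hand --- a gap you correctly flag. For $p\to0^+$ the paper avoids appealing to $F^{(p)}\searrow F^{(0)}$: it iterates $A(p)\ge A(p/e)+p$ to get $\liminf A(p)/p\ge(1-e^{-1})^{-1}$, then, assuming $\lim A(p)/p$ exists, derives the matching upper bound from an explicit near-optimal $u$ close to $e$ (the larger root of $u^2-eu+(k+\eps)p=0$). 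Your iteration reaches the same conclusion but leans a bit more on the unproved claim that the maximizer $u_*\to e$. None of these gaps is fatal; the paper itself warns that for $0<p<1$ it gives ``only a sketch of proof, using heuristics at some steps'' and explicitly says ``skipping a necessary justification'' for the existence of $\lim_{p\to0^+}A(p)/p$, so your flagged obstacles coincide with the ones the authors acknowledge.
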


%(Note: the constant $A=A(1)$ in Theorem~\ref{thm:main} has the form $(e-1)+\eps$, where $\eps\ll 1$. Is there an explanation? Yes (partially) in Th.~\ref{thm:genpar}. Is $A(p)$ convex?

\medskip
The plan of the paper is as follows.

\smallskip
In Section~\ref{sec:variants} we describe a few guises in which the optimization problem leading to the functions $F_n(x)$ or $F^{(p)}_n(x)$ may arise. In particular,
we discuss a ``least action formulation'' and introduce a recurrence that determines the ``extremal trajectory'' for each $n$.

\smallskip
In Section~\ref{sec:simpleproofs} we prove what is easy to prove: Theorem~\ref{thm:AMGM} and the crude asymptotics
$F(x)=e\log n+O(1)$. 

\smallskip
In Section~\ref{sec:minimizers} we begin the analysis
of candidate minimizers (extremal trajectories) for
the main problem. We identify some special points
for every $n$ that play specific roles in the analysis.
We illustrate the introduced concept with graphs
and point out some pitfalls.

\smallskip
In Section~\ref{sec:proofmain} the facts observed in
Section~\ref{sec:minimizers} are fully formalized and proved and the proof of Theorem~\ref{thm:main} is completed by reducing it to a form suitable for application of Theorem~\ref{thm:paramcurves}.
 
\smallskip
In Section~\ref{sec:Ap} we prove Theorem~\ref{thm:genpar}
and describe an efficient algorithm, based on a recurrence,
to tabulate the function $A(p)$ for $0<p<1$.

\smallskip
There are two appendices containing proofs technically unrelated to the main part of the paper.

\smallskip
In Appendix~\ref{app:paramcurves} Theorem~\ref{thm:paramcurves} is proved. 

\smallskip
In Appendix~\ref{app:alpha} we rigorously prove a visually obvious property (Proposition~\ref{prop:alpha})
of a function $\alpha_\infty(x)$, which is important in the main proof.

The proof uses contour integration for estimating derivatives of a real function.
Some elements of the proof refer to numerical evaluations but always in a ``provable'' way.

\section{Alternative forms of the extremal problem}
\label{sec:variants}

\subsection{Prototype: formulations for the AM-GM}

The AM-GM optimization problem as stated in \eqref{ep_AM-GM} involves
only ``soft'' constraints $t_j>0$.
The objective function can be written more symmetrically by introducing two extra indeterminates $t_0$ and $t_n$ and subjecting them to the artificial, boundary-value constraints:  
\begin{align}
& F^{(0)}_n(x)=\min_{\mathbf{t}>0,\; t_0=1,t_n=x} \hat S_n^{(0)}(t_0,t_1,\dots,t_n),
\label{optfn0}
\\[3ex]
& \hat S_n^{(0)}(t_0,t_1,\dots,t_n)\eqbydef
\frac{t_1}{t_0}+\frac{t_2}{t_1}+\dots+\frac{t_{n-1}}{t_{n-2}}+\frac{t_n}{t_{n-1}}
\label{L0n}
.
\end{align}

A proof of the AM-GM inequality by R.~Bellmann's dynamic programming approach%
\footnote{\cite[\S~7]{BeckBel_1961} presents a dual dynamic programming formulation of the AM-GM inequality, in which one maximizes the product of $n$ indeterminates subject to the prescribed value of their sum.}
 amounts to replacing the multivariate optimization problem by
a sequence of univariate optimization problems, 
where the functions $F_n^{(0)}$ are defined recurrently by
putting $F^{(0)}_1(x)\eqbydef x$ and 
\begin{equation}
\label{recurf0}
 F^{(0)}_n(x)=\min_{y> 0}\left(F_{n-1}^{(0)}(y)+\frac{x}{y}\right),
\quad n\geq 2. %,
\end{equation}
%starting with $F^{(0)}_1(x)=x$.

\smallskip
%Similarly, there are different ways,
%besides \eqref{op_main}, to define the function
%$F_n(x)$. We %will 
%introduce them in an order that suits the convenience of proofs.

\subsection{Formulation with boundary constraints}
\label{ssec:bconst}

The constrained optimization problem that parallels
\eqref{optfn0} is
%with uniform summands in the objective function,
\begin{align}
& F_n(x)=\inf_{\mathbf{t}\geq 0,\; t_0=0,t_n=x}
\hat S_n(t_0,t_1,\dots,t_n), 
\label{optfn}
\\[3ex]
& \hat S_n(t_0,t_1,\dots,t_n)\eqbydef
\frac{t_1}{t_0+1}+\frac{t_2}{t_1+1}+\dots+\frac{t_{n-1}}{t_{n-2}+1}+\frac{t_n}{t_{n-1}+1}
\label{Ln}
.
\end{align}

In Sec.~\ref{ssec:leastaction} we will elaborate on this
presentation of the problem.

\subsection{Formulation with additive constraint}

\begin{prop}
\label{prop:altoptfn}
For every $n\in\NN$, the function $\bar F_n(x)$ defined by
\begin{equation}
\label{op_additive}
 \bar F_n(x)=\inf_{\{u_1,\dots,u_n\mid \sum u_j=1\}} \left(\frac{u_1}{u_2}+\dots+\frac{u_{n-1}}{u_n}+xu_n\right)
\end{equation}
is identical to $F_n(x)$. 
\end{prop}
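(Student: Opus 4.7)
The plan is to establish by induction on $n$ that both families $F_n$ and $\bar F_n$ satisfy the common Bellman-type recurrence
$$
H_n(x)=\inf_{y\geq 0}\left[H_{n-1}(y)+\frac{x}{y+1}\right],\qquad n\geq 2,
$$
with initial condition $H_1(x)=x$. Since the recurrence determines the entire sequence from $H_1$, the identity $F_n\equiv\bar F_n$ follows at once. The base case is immediate: $F_1(x)=x$ by direct reading of \eqref{S1}, while the single-variable version of \eqref{op_additive} forces $u_1=1$ and yields $\bar F_1(x)=xu_1=x$.

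For the recurrence on the $F_n$ side I would freeze the last free variable $t_{n-1}=y$ in \eqref{op_main} and minimize over $t_1,\dots,t_{n-2}$ first. The initial $n-1$ terms of $S(t_1,\dots,t_{n-1},x)$, read with $t_{n-1}=y$, match the definition of $F_{n-1}(y)$ term by term, while the tail contributes $x/(y+1)$ independently of the preceding variables. Taking the outer infimum over $y\geq 0$ produces the recurrence.

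On the $\bar F_n$ side the key move is a scaling. I would fix $u_n=c\in(0,1)$ and write the remaining coordinates as $u_j=(1-c)v_j$, $j=1,\dots,n-1$, with $\sum_{j=1}^{n-1}v_j=1$. The ratios $u_j/u_{j+1}=v_j/v_{j+1}$ are then scale-invariant for $j\leq n-2$, while $u_{n-1}/u_n=(1-c)v_{n-1}/c$ and $xu_n = xc$. The inner infimum over the $v_j$'s is therefore exactly $\bar F_{n-1}((1-c)/c)$, which gives $\bar F_n(x)=\inf_{c\in(0,1)}\bigl[\bar F_{n-1}((1-c)/c)+cx\bigr]$. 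The monotone change of variable $c=1/(y+1)$ with $y\in[0,\infty)$ converts this into the desired Bellman form, and the inductive hypothesis closes the step.

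The only mildly delicate point I anticipate is the bookkeeping at the endpoint $c=1$ (equivalently $u_n=1$, all other $u_j=0$), where some ratios in \eqref{op_additive} are of the form $0/0$. Since $\bar F_n$ is defined by an infimum, it suffices to approach this boundary from the interior; a short induction gives $\bar F_{n-1}(0)=0$ (take $u_j=\varepsilon^{\,n-1-j}$ suitably normalized and send $\varepsilon\to 0$), so continuity absorbs the issue and the two recurrences align without exception.
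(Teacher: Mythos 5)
Your proof is correct, and it takes a genuinely different route from the paper. The paper's own proof constructs an explicit one-to-one change of variables between the simplex $\{\sum u_j = 1\}$ and $\RR_+^{n-1}$, namely $t_j = s_j/u_{j+1}$ with $s_j = u_1 + \dots + u_j$, and checks term-by-term that $S_n(t_1,\dots,t_{n-1},x) = \bar S_n(u_1,\dots,u_n,x)$; the identity of the infima then holds because the optimization domains are in bijection. Your proof instead observes that both sequences $(F_n)$ and $(\bar F_n)$ start at $x \mapsto x$ and obey the same Bellman recurrence $H_n(x) = \inf_{y \ge 0}\bigl[H_{n-1}(y) + x/(y+1)\bigr]$, so they agree by induction. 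For the $F_n$ side this is immediate by freezing $t_{n-1}=y$; for the $\bar F_n$ side your scaling $u_j = (1-c)v_j$, $j < n$, combined with $c = 1/(y+1)$ is exactly right, and your treatment of the boundary $c\to 1$ (via $\bar F_{n-1}(0)=0$ and continuity of the infimum) is adequate. The paper's bijection is slightly stronger in that it identifies the two variational problems pointwise, not just their optimal values, and makes the structural relation between the additive-constraint and boundary-constraint formulations transparent; your route is shorter and more modular, leaning on the dynamic-programming structure that the paper develops anyway in Section~\ref{ssec:dynprog}. Both arguments are sound.
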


The objective function in \eqref{op_additive}
can be written as $\hat S^{(0)}_{n}(1/u_1,\dots,1/u_n, x)$,
using the notation \eqref{L0n}. Thus \eqref{op_additive}
can also be seen as a result of the replacement of
the boundary condition $t_0=1$ in \eqref{optfn0} by the nonlocal constraint $1/t_0+\dots+1/t_{n-1}=1$.

%Putting $v_j=1/u_{j}$, we obtain an yet another presentation
%$$
%\bar F_n(x)=\inf_{\{v_1,\dots,v_n\mid \sum 1/v_j=1\}} \left(\frac{v_2}{v_1}+\dots+\frac{v_{n}}{v_{n-1}}+\frac{x}{v_n}\right)
%$$

The functions $F_n(x)$ in the guise \eqref{op_additive} appeared in my work \cite{Sadov_2022_maxcyc}.

\begin{proof}
\iffalse
Clearly, $\tilde f_1(x)=x$. We make the inductive assumption that
$$
 \tilde f_{n-1}(y)=\min_{\{p'_1,\dots,p'_{n-1}\mid \sum p'_j=1\}} \left(\frac{p'_1}{p'_2}+\dots+\frac{p'_{n-2}}{p'_{n-1}}+yp'_{n-1}\right).
$$
Now we put $p_n=(1+y)^{-1}$ and 
$$
 p_j=\frac{p'_j}{y+1},\quad j=1,\dots,n-1.
$$
\fi
%%%%%%%%%%%%%%%%
\iffalse
Consider the case $n=2$. Here
$$
 f_2(x)=\min_{t_1>0}\left(t_1+\frac{x}{t_1+1}\right),
$$
while
$$
 \tilde f_2(x)=\min_{p_1+p_2=1}\left(\frac{p_1}{p_2}+xp_2\right).
$$
Put
$$
 p_2=\frac{1}{t_1+1},\qquad p_1=t_1 p_2=\frac{t_1}{t_1+1}.
$$
Clearly, $p_1+p_2=1$. Conversely, given $p_1\in(0,1)$
and $p_2=1-p_1$, put
$t_1=p_1/p_2$.
\fi
%%%%%%%%%%%%%%%%
Denote the objective function in \eqref{op_additive}
%the definition of $\bar F_n(x)$ 
by $\bar S_n(\dots)$.
%Keeping $x$ fixed, 
We will exhibit a one-to-one correspondence 
between the points $(u_1,\dots,u_n)$ of the simplex $\sum u_j=1$ and the points
$(t_1,\dots,t_{n-1})$ of  $\RR_+^{n-1}$ preserving the
objective function: $S_n(t_1,\dots,t_{n-1},x)=\bar S_n(u_1,\dots,u_n,x)$. 
%\eqref{op_main}

Denote
$$
 s_j=u_1+\dots+u_j \quad (j=1,\dots,n).
$$

To a vector $\mathbf{u}$ with $\sum u_j=1$ (that is,
$s_n=1$) we
put in correspondence the vector $\mathbf{t}=(t_{1},\dots,t_{n-1})$ with
coordinates
$$
 t_{j}=\frac{s_j}{u_{j+1}}\quad (j=1,\dots,n-1).
$$
Then 
$$
 t_j+1=\frac{s_j+u_{j+1}}{u_{j+1}}=\frac{s_{j+1}}{u_{j+1}},
$$
so
$$
S_n(t_1,\dots,t_{n-1},x)=\frac{s_1}{u_2}+\sum_{j=2}^{n-1}
\frac{u_j}{u_{j-1}}+\frac{xu_n}{s_n}=
\bar S_n(u_1,\dots,u_n,x),
$$
since $s_1=u_1$ and $s_n=1$.

\iffalse
$$
\frac{t_j}{t_{j-1}+1}=\frac{p_j}{p_{j+1}}
\quad (2\leq j\leq n-1)
$$
and
$$
 t_1=\frac{s_1}{p_2}=\frac{p_1}{p_2}.
$$
Finally,
$$
 \frac{x}{t_{n-1}+1}=\frac{xp_n}{s_n}=xp_n.
$$
We see that the objective function from
the definition of $\tilde f_n(x)$  is transformed term-wise to the objective function from \eqref{op_main}.
\fi

%Conversely, given $s_n>s_{n-1}>\dots>s_1$
%we have $u_j=s_{j}-s_{j-1}$ ($j=2,\dots,n$) and $u_1=s_1$.

The inverse transformation $\mathbf{t}\mapsto \mathbf{u}$ is defined by
the formulas
$$
 u_n=\frac{1}{t_{n-1}+1},
$$
then recurrently for $j=n-1,\dots,2$
$$
u_j=\frac{t_j u_{j+1}}{t_{j-1}+1}
%=\frac{t_j t_{j+1}\dots t_{n-1}}{(t_{j-1}+1)(t_j+1)\dots(t_{n-1}+1)}.
=\frac{1}{t_{j-1}+1}\prod_{i=j}^{n-1}\frac{t_i}{t_i+1},
$$
and lastly,
$$
 u_1=t_1 u_2.
$$
We prove by induction (on ascending $j$) that 
$s_j=t_j u_{j+1}$, $1\leq j\leq n-1$.
It is true for $j=1$, since $s_1=u_1$. The induction step ($2\leq j\leq n-1)$ goes: $s_j=s_{j-1}+u_j=(t_{j-1}+1)u_j=t_j u_{j+1}$.
Finally,
$s_n=(t_{n-1}+1)u_n=1$, so that 
the $\mathbf{u}$ satisfies the required constraint.
\end{proof}

\subsection{Existence of a minimizer}
\label{ssec:existence}
We have written `$\min$' in Eq.~\eqref{optfn0}, since the
minimum of the objective function in is attained at critical point where $t_j=x^{j/n}$ ($j=0,1,\dots,n)$. It is also true, thouhg not immediately obvious, that the greatest lower bound of the objective function in \eqref{optfn} is attained. We prove this now along with first elementary properties of the functions $F_n(\cdot)$.

%%%%%%%%%%%%%%%%%%%%%%%%
\iffalse
We can, of course, introduce an interpolatory family of optimization problems:
$$
\ba{l}\dst
 f_n(x|s)=
\min_{t_1,\dots,t_{n-1}\geq 0} \left(t_1+\frac{t_2}{t_1+s}+\dots+\frac{t_{n-1}}{t_{n-2}+s}+\frac{x}{t_{n-1}+s}
\right),
\\[2ex]
f_0(x|s)=x.
\ea
$$
Equivalently (upon substitution $t_j\mapsto st_j$): $f_n(x|s)=g_n(x/s\mid s)$, where
$$
\ba{l}\dst
 g_n(x|s)=
\min_{t_1,\dots,t_{n-1}\geq 0} \left(st_1+\frac{t_2}{t_1+1}+\dots+\frac{t_{n-1}}{t_{n-2}+1}+\frac{x}{t_{n-1}+1}
\right),
\\[2ex]
g_1(x|s)=sx.
\ea
$$
There is the recursive definition, the same as one for $f_n$:
$$
 g_n(x|s)=\min_{y>0}\left(g_{n-1}(y)+\frac{x}{y+1}\right),\quad n\geq 2.
$$
Only the initial condition (case $n=1$) depends on $s$.
\fi
%%%%%%%%%%%%%%%%%%%%%%%%

\begin{prop}
\label{prop:monotfn}
{\rm(a)} For every $n$, the function $F_n(\cdot)$ is nondecreasing.

{\rm(b)} For every fixed $x$, the sequence $(F_n(x))$ is nonincreasing, hence the function $F(x)$ can be defined as
a monotone limit
$$
F(x)=\downarrow\lim_{n\to\infty} F_n(x).
$$

{\rm(c)} 
\iffalse
For every $n$ the inequality 
\begin{equation}
\label{compf0fn}
\frac{1}{2}F^{(0)}(x)\leq F_n(x)\leq F_n^{(0)}(x)
\end{equation}
holds. Consequently,
\begin{equation}
\label{compf0f}
\frac{1}{2}F^{(0)}(x)\leq F(x)\leq F^{(0)}(x).
\end{equation}

%
%$f_n(x)=x$ if $x\leq 1$ for any $n$.

{\rm(d)} 
\fi
The objective function $(t_1,\dots,t_{n-1})\mapsto L_n(0,t_1,\dots,t_{n-1},x)$ attains its minimum value at some nonnegative
$(n-1)$-tuple.
Hence the symbol `$\inf$' in  \eqref{op_main}, \eqref{optfn} and \eqref{op_additive} can be replaced by `$\min$'. 
\end{prop}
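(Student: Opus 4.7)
The plan is to treat the three parts in turn by direct arguments.

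For part (a), the objective $S(t_1,\dots,t_{n-1},x)$ in \eqref{S1} depends on $x$ only through the last summand $x/(t_{n-1}+1)$, which is nondecreasing in $x$ at every fixed $t_{n-1}\geq 0$. Consequently the whole objective is pointwise nondecreasing in $x$, and taking the infimum over $(t_1,\dots,t_{n-1})$ preserves this monotonicity.

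For part (b), I would establish $F_{n+1}(x)\leq F_n(x)$ by a left-padding argument. Given any admissible $(t_1,\dots,t_{n-1})\in\RR_{\geq 0}^{n-1}$ for $F_n(x)$, set $(\tilde t_1,\tilde t_2,\dots,\tilde t_n)=(0,t_1,\dots,t_{n-1})$. A direct expansion gives
$$S(\tilde t_1,\dots,\tilde t_n,x)=0+\frac{t_1}{0+1}+\frac{t_2}{t_1+1}+\cdots+\frac{x}{t_{n-1}+1}=S(t_1,\dots,t_{n-1},x),$$
since the vanishing first coordinate cancels the denominator shift in the second summand. Taking the infimum over $(t_1,\dots,t_{n-1})$ yields $F_{n+1}(x)\leq F_n(x)$, so the sequence $(F_n(x))$ is nonincreasing and converges to $\inf_n F_n(x)=F(x)$.

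For part (c), the crux is coercivity of the continuous objective $g(t_1,\dots,t_{n-1}):=S(t_1,\dots,t_{n-1},x)$ on the closed orthant $\RR_{\geq 0}^{n-1}$. Suppose $g\leq M$ at some point. The first summand gives $t_1\leq M$, and each ratio inequality $t_k/(t_{k-1}+1)\leq M$ for $2\leq k\leq n-1$ yields inductively $t_k\leq M(t_{k-1}+1)\leq M+M^2+\cdots+M^k$. Note that the final summand $x/(t_{n-1}+1)$ alone does not bound $t_{n-1}$, but the preceding ratio (with $k=n-1$) does. Hence every sublevel set $\{g\leq M\}$ is bounded and closed, so compact; continuity then guarantees the infimum is attained at some $(t_1,\dots,t_{n-1})\in\RR_{\geq 0}^{n-1}$. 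The parallel claims for \eqref{optfn} and \eqref{op_additive} follow at once: in \eqref{optfn} the boundary prescription $t_0=0$ reduces $\hat S_n$ term-for-term to $S$, and for \eqref{op_additive} the explicit change of variables from the proof of Proposition~\ref{prop:altoptfn} transports a minimizer between the two formulations. None of the steps poses a substantive obstacle; the only subtlety worth flagging is the observation about $t_{n-1}$ just mentioned, and the proof is otherwise routine bookkeeping on the structure of the sum.
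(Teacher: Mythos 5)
Your proposal is correct and follows essentially the same route as the paper. Part (a) is the same trivial observation; part (b) is the same left-padding argument (the paper phrases it as imposing the extra constraint $t_1=0$ in the $\hat S_n$ formulation, which is literally the same move); and part (c) is the same compactness argument — the paper phrases it as reducing the admissible region to a parallelotope $\{0\le t_k\le C_k\}$ with $C_k=(C_{k-1}+1)(C+1)$, whereas you phrase it as boundedness of sublevel sets with the bound $t_k\le M+M^2+\cdots+M^k$, but the inductive estimate $t_k\le M(t_{k-1}+1)$ is identical in substance.
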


\begin{proof}
(a) Trivial: the function $x\mapsto S_n(t_0,\dots,t_{n-1},x)$ is increasing for every
$n$-tuple 
$(t_0,\dots,t_{n-1})$.

\smallskip
(b) The inequality $F_{n}(x)\leq F_{n-1}(x)$ 
is due to the fact that imposing the additional constraint $t_1=0$ in \eqref{optfn} yields $F_{n-1}(x)$. 
That is,
$$
L_{n-1}(0,t_2,\dots,t_n)=L_n(0,0,t_2,\dots,t_n).
$$

\iffalse
(c) 
The right inequality in \eqref{compf0fn} is obvious: $L_n^{(0)}(\mathbf{t})>L_n(\mathbf{t})$ for any
$\mathbf{t}>0$.

The left inequality is equivalent to the claim:
for any $n$, $x$ and $t_1,\dots,t_{n-1}$ there exists $k$ such that
$$
 L_n(0,t_1,\dots,t_{n-1},x)\geq \frac{1}{2} F_k^{(0)}(x).
$$
We prove this by induction on $n$.
The base case $n=1$ is obvious, since 
$L_1(0,x)=x$ and one can take $k=1$.

Assuming that the claim is true with $n-1$ in place of $n$ and some $k'$ in place of $k$, we write
$$
 L_n(0,t_1,\dots,t_{n-1},x)=
L_{n-1}(0,t_1,\dots,t_{n-1})+\frac{x}{t_{n-1}+1}.
$$
If $t_{n-1}\leq 1$, then $x/(t_{n-1}+1)\geq x/2$,
so the claim is true with $k=1$.
%
If $t_{n-1}> 1$, then $x/(t_{n-1}+1)> x/(2t_{n-1})$.
%
Using the recursive definition \eqref{recurf0}, we infer
$$
%\ba{l}\dst
 S_n(0,t_1,\dots,t_{n-1},x)
%=S_{n-1}(0,t_1,\dots,t_{n-1})+\frac{x}{t_{n-1}+1}
%\\[3ex]\dst
%\qquad
\geq
%\frac{1}{2} f^{(0)}(x)\frac{x}{2t_{n-1}}=
\frac{1}{2}\left(f_{k'}^{(0)}(x)+\frac{x}{t_{n-1}}\right)
\geq \frac{1}{2}f_{k}^{(0)}(x),
%\ea
$$
where $k=k'+1$. 

The left-hand side of \eqref{compf0fn} does not depend on $n$, hence  \eqref{compf0fn} implies \eqref{compf0f}.

\smallskip
(d) 
\fi

(c) 
It suffices to show that the optimization region
in \eqref{op_main} can be reduced to a compact.
Let us fix $x$ and put $C=F_n(x)$. 
If $t_1>C+1$, then $S(t_1,\dots,t_{n-1},x)>C+1$,
so the optimization region can be reduced to
$\{0\leq t_1\leq C+1,\;t_2,\dots,t_{n-1}\geq 0\}$.

Suppose that $t_1\leq C_1=C+1$ and define the constants
$C_k$ recurrently: $C_k=(C_{k-1}+1)(C+1)$.
If $k\in\{2,\dots,n-1\}$ is the least index such that $t_k>C_k$ (assuming such an index exists),
we have $t_k/(t_{k-1}+1)>C+1$. Hence $S_n(t_1,\dots,t_{n-1},x)>C+1$ in the region 
$\{t_1>C_1\}\cup\{t_2>C_2\}\cup\dots\{t_{n-1}>C_{n-1}\}$.
Therefore the optimization region is reduced
to the parallelotop $\{0\leq t_k\leq C_k,\;\;k=1,\dots,n-1\}$.
\end{proof}

\begin{definition}
An $(n-1)$-tuple $(t_1,\dots,t_{n-1})$ is called a {\em minimizer}\ for the problem \eqref{op_main} if
$S_{n-1}(t_1,\dots,t_{n-1},x)=F_{n-1}(x)$.
We will also say ``a minimizer for $F_n(x)$''. 

With reference to the problem \eqref{optfn} we will call
the minimizing $(n+1)$-tuple $(0,t_1,\dots,t_{n-1},x)$
a minimizer. 
\end{definition}

%Minimizers will play the decisive role in the proof of Theorem~\ref{thm:main}. 
In preparation to the proof of Proposition~\ref{prop:funeqf} let us prove the first result
on minimizers. 

\begin{lemma}
\label{lem:minimizer0}
If $x\leq 1$, then the unique minimizer for $F_n(x)$
is the zero tuple.
\end{lemma}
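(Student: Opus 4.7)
The plan is to induct on $n$ using the one-step dynamic programming reduction
$$
 F_n(x)=\inf_{y\geq 0}\left(F_{n-1}(y)+\frac{x}{y+1}\right),
$$
obtained from \eqref{op_main} by singling out $y=t_{n-1}$ and taking the infimum over $t_1,\dots,t_{n-2}$ first. The base case $n=1$ is trivial: $F_1(x)=x$ corresponds to the empty tuple, so the claim is vacuous.

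For the inductive step I assume $F_{n-1}(y)=y$ for $0\leq y\leq 1$, with the zero $(n-2)$-tuple being the unique minimizer. Split the range of $y$ in the recurrence according to whether $y\leq 1$ or $y>1$. In the range $y\in[0,1]$ the induction hypothesis turns the objective into $y+x/(y+1)$, whose derivative $1-x/(y+1)^2$ equals $1-x\geq 0$ at $y=0$ and is strictly increasing in $y$; hence it is strictly positive for all $y>0$ whenever $x\leq 1$. Consequently the minimum of $y+x/(y+1)$ over $[0,1]$ equals $x$ and is attained only at $y=0$. In the range $y>1$ I use monotonicity of $F_{n-1}$ from Proposition~\ref{prop:monotfn}(a) together with $F_{n-1}(1)=1$ supplied by the hypothesis; this gives $F_{n-1}(y)+x/(y+1)>1\geq x$, with strict inequality because $x/(y+1)>0$.

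Combining the two ranges, the infimum in the recurrence equals $x$ and is attained only at $y=t_{n-1}=0$. Applying the induction hypothesis once more with the value $y=0\leq 1$ forces $t_1=\cdots=t_{n-2}=0$, so the minimizer of $F_n(x)$ is indeed the zero $(n-1)$-tuple, and it is unique. I do not foresee a serious obstacle: the argument is a short induction, and the only subtlety lies in securing uniqueness at the endpoint $x=1$, which is handled by the observation that the derivative of $y+x/(y+1)$ vanishes only at $y=0$ even in that critical case.
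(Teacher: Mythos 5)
Your proof is correct, but it uses a genuinely different mechanism than the paper. The paper works directly with the sum $S$ and proves, by induction on the length of the tuple, the pointwise inequality $S(t_1,\dots,t_{n-1}) > t_{n-1}/(t_{n-1}+1)$ whenever $\mathbf{t}\neq\mathbf 0$ (reducing to the worst case $x=1$), and then notes that adding $x/(t_{n-1}+1)$ pushes the total above $x=S(0,\dots,0,x)$. You instead invoke the one-step dynamic-programming reduction $F_n(x)=\inf_{y\geq 0}\bigl(F_{n-1}(y)+x/(y+1)\bigr)$ — which, to be clear, follows immediately from the definition \eqref{op_main} and is not the stronger restricted-range recurrence of Proposition~\ref{prop:funeqf}, so there is no circularity — and then analyze the scalar function $y\mapsto F_{n-1}(y)+x/(y+1)$ on $[0,1]$ and $(1,\infty)$ separately, using Proposition~\ref{prop:monotfn}(a) in the second range. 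Each approach has a small cost: the paper's argument is fully self-contained and purely algebraic, needing only the elementary identity $\frac{t_{n-1}+t_n}{t_{n-1}+1}-\frac{t_n}{t_n+1}=\frac{t_n^2+t_{n-1}}{(t_{n-1}+1)(t_n+1)}\geq 0$; yours imports the DP recurrence and the monotonicity result, but in return the key estimate becomes a one-variable calculus fact (the derivative $1-x/(y+1)^2$ is nonnegative at $y=0$ and strictly increasing), which some readers may find more transparent. One point you gloss over slightly: you should say explicitly that if $(t_1^*,\dots,t_{n-1}^*)$ minimizes $F_n(x)$, then $y=t_{n-1}^*$ necessarily minimizes the outer problem and $(t_1^*,\dots,t_{n-2}^*)$ necessarily minimizes $F_{n-1}(t_{n-1}^*)$ — this follows from the chain $F_n(x)=S(\mathbf t^*,x)\geq F_{n-1}(t_{n-1}^*)+x/(t_{n-1}^*+1)\geq F_n(x)$ forcing equality throughout — before the uniqueness conclusion can be read off from the induction hypothesis.
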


\begin{proof}
We need to prove that if not all $t_j$ equal zero, then
$S_{n}(t_1,\dots,t_{n-1},x)>S_{n}(0,\dots,0,x)=x$.
Equivalently, the inequality to prove is: if $\mathbf{t}\neq\mathbf{0}$, then (in the worst case, when $x=1$) 
$$
 S_{n-1}(t_1,\dots,t_{n-1})>\frac{t_{n-1}}{t_{n-1}+1}.
$$

It is obvious, if $n-1=1$. By induction we proceed from the estimate for $S_{n-1}$ to the one for $S_n$.

If $t_1=\dots=t_{n-1}=0$ and $t_{n}>0$, then
we are in the same situation as in the case $n-1=1$.
If not all $t_j$ with $j\leq n-1$ are equal to 0, then
by the induction hypothesis we have
$$
 S_{n}(t_1,\dots,t_{n})=S_{n-1}(t_1,\dots,t_{n-1})+\frac{t_n}{t_{n-1}+1}
>\frac{t_{n-1}+t_n}{t_{n-1}+1}.
$$
Since
$$
 \frac{t_{n-1}+t_n}{t_{n-1}+1}-\frac{t_{n}}{t_{n}+1}
=\frac{t_n^2+t_{n-1}}{(t_{n-1}+1)(t_{n}+1)}\geq 0,
$$
the induction step is complete.
\end{proof}

\subsection{Dynamic programming formulation}
\label{ssec:dynprog}
\smallskip
We give recurrent equations for
$F_n(\cdot)$ analogous to \eqref{recurf0}.

Also it appears possible to define the function $F(x)$ is expressed without explicit reference to the $F_n$'s
--- by means of
the functional equation \eqref{fe1}.
%Later, in Proposition~\ref{prop:fnstab}, we show that the minimization range in Eqs.~\eqref{recurf} and \eqref{fe1} 
%can be reduced to an interval of length approximately $x/e$.

\begin{prop}
\label{prop:funeqf}
Let $\tilde F_n(x)$, $n=1,2,\dots$, be a sequence of functions %such that $\tilde F_n(x)$ is 
defined for $x>0$ as follows.

\smallskip
{\rm (i)} For $0<x\leq 1$ and all $n\;\;$ $\tilde F_n(x)= x$.

\smallskip
{\rm (ii)}  
For $x>1$ and $n\geq 2$
\begin{equation} 
\label{recurf}
 \tilde F_n(x)= \min_{0< y<x-1}\left(\tilde F_{n-1}(y)+\frac{x}{y+1}\right).
\end{equation} 

Define also the function $\tilde F(x)$, $x>0$, as follows.

\smallskip
{\rm (i$'$)} If $0<x\leq 1$, then $\tilde F(x)= x$.

\smallskip
{\rm (ii$'$)} In the intervals $(1,2], (1,3], \dots$ the values $\tilde F(x)$ are defined recursively by means of the functional equation
\begin{equation} 
\label{fe1}
 \tilde F(x)=\min_{0< y<x-1}\left(\tilde F(y)+\frac{x}{y+1}\right).
\end{equation} 
(The right-hand side refers to values of $\tilde F(\cdot)$ defined earlier.)

Then \\
{\rm (a)} 
$\tilde F(x)=\tilde F_n(x)$ for $0<x\leq n$;
\\
{\rm (b)} 
$\tilde F_n(x)=F_n(x)$ for all $n$ and $x$, and
\\
{\rm (c)} 
$\tilde F(x)=F(x)$ for all $x$. 
\end{prop}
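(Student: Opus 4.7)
The plan is to prove (a), (b), (c) in order. Part (c) follows at once from (a) and (b): for any fixed $x > 0$ and $n \geq x$, part (a) gives $\tilde F(x) = \tilde F_n(x)$ and part (b) gives $\tilde F_n(x) = F_n(x)$, so $\tilde F(x) = \lim_n F_n(x) = F(x)$.

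For (a) I would induct on $n$. The base $n = 1$ is trivial. In the inductive step, for $0 < x \leq 1$ both sides equal $x$; for $1 < x \leq n$ the recursion \eqref{fe1} for $\tilde F(x)$ only refers to values $\tilde F(y)$ with $y \in (0, x-1) \subset (0, n-1)$, which by the induction hypothesis coincide with $\tilde F_{n-1}(y)$, so the right-hand sides of \eqref{fe1} and \eqref{recurf} are identical.

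The substance lies in (b), which I would also prove by induction on $n$. The case $0 < x \leq 1$ is settled by Lemma~\ref{lem:minimizer0}: both $F_n(x)$ and $\tilde F_n(x)$ equal $x$. For $x > 1$ and $n \geq 2$ the first step is the standard dynamic programming identity
$$F_n(x) = \inf_{y \geq 0}\bigl(F_{n-1}(y) + x/(y+1)\bigr),$$
obtained by splitting the minimization over $(t_1,\dots,t_{n-1})$ into an inner minimization over $(t_1,\dots,t_{n-2})$ for each fixed value of $y = t_{n-1}$. Since $F_n(x)$ is attained (Proposition~\ref{prop:monotfn}(c)), the infimum above is likewise attained at $y^* := t_{n-1}^*$. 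To match this with \eqref{recurf} it then only remains to show $y^* \in (0, x-1)$; the outer inductive hypothesis $F_{n-1} = \tilde F_{n-1}$ will then finish the step.

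I expect the location of $y^*$ to be the main obstacle, and I would isolate it as a sub-claim proved by a separate induction on $n$: for all $n \geq 2$ and $x > 1$, every minimizer $(t_1^*, \dots, t_{n-1}^*)$ of $S_n(\cdot, x)$ satisfies $0 < t_{n-1}^* < x - 1$. Strict positivity is easy: with $g(y) := F_{n-1}(y) + x/(y+1)$, Lemma~\ref{lem:minimizer0} gives $F_{n-1}(y) = y$ near $0$, so $g'(0^+) = 1 - x < 0$ for $x > 1$. For the upper bound, the envelope theorem applied to the DP recurrence (equivalently, the first-order condition in $t_{n-1}$, using the convention $t_0 := 0$) yields
$$(t_{n-1}^* + 1)^2 = x\,(t_{n-2}^* + 1).$$
For $n = 2$ this gives $t_1^* = \sqrt{x} - 1 < x - 1$ outright. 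For $n \geq 3$, if $t_{n-1}^* \leq 1$ then Lemma~\ref{lem:minimizer0} forces $t_{n-2}^* = 0$ and again $t_{n-1}^* = \sqrt{x} - 1 < x - 1$; while if $t_{n-1}^* > 1$, applying the sub-claim inductively to the subproblem $F_{n-1}(t_{n-1}^*)$ gives $t_{n-2}^* < t_{n-1}^* - 1$, whence $(t_{n-1}^* + 1)^2 < x\,t_{n-1}^*$, i.e.\ $t_{n-1}^* + 2 + 1/t_{n-1}^* < x$, so $t_{n-1}^* < x - 2 < x - 1$.
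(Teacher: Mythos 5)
Your proposal is correct and follows essentially the same plan as the paper's own proof: an induction on $n$ that simultaneously establishes $\tilde F_n = F_n$ and the location claim $t_{n-1}^* \in (0,\,x-1)$, driven by Lemma~\ref{lem:minimizer0}, the first-order condition $(t_{n-1}^*+1)^2 = x(t_{n-2}^*+1)$, and the case split on $t_{n-1}^* \lessgtr 1$. The only cosmetic difference is that you argue the upper bound $t_{n-1}^* < x-1$ directly in each case, whereas the paper supposes $t_{n-1}^*+1 \geq x$ and derives a contradiction from $t_{n-1}^* \leq t_{n-2}^*$.
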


\begin{proof}
(a) $\tilde F_1(x)=\tilde F(x)=1$ for $0<x\leq 1$ by definition.
The claimed identity for any $n\geq 2$ follows by induction, comparing the recursive definitions of $\tilde F_n(x)$ and $\tilde F(x)$.

\smallskip
(b)
We will prove by induction, simultaneously, that (I) $\tilde F_n(x)=F_n(x)$,
$0<x\leq n$; and (II) $0<t_{n-1}^*<x-1$, so that $t_{n-1}^*$ can be identified with the point of minimum
$y_*$ in the right-hand side of \eqref{recurf}.

For $n=1$, the claim (I) is trivial and the claim (II) is vacuous. %We may assume that $x>1$.

Suppose that the objective function in \eqref{op_main} attains its minimum at $\mathbf{t}^*=(t_1^*,\dots,t_n^*)$,
so that
$$
 F_n(x)=S_{n-1}(t^*_1,\dots,t^*_{n-1})+\frac{x}{t^*_{n-1}+1}.
$$
The existence of such $\mathbf{t}^*$ is ensured by Proposition~\ref{prop:monotfn}.

We have
$$
\frac{\partial S_n(\mathbf{t},x)}{\partial t_{n-1}}
=\frac{\partial}{\partial t_{n-1}}
\left(\frac{t_{n-1}}{t_{n-2}+1}+\frac{x}{t_{n-1}+1}\right)=\frac{1}{t_{n-2}+1}-\frac{x}{(t_{n-1}+1)^2}.
$$
(For $n=2$, we formaly take $t^*_{0}=0$.) 

Since $x>1$, there can be no minimum at $t_{n-1}=0$. 
Hence $t^*_{n-1}>0$. 

By the choice of $\mathbf{t}^*$ we have $S_{n-1}(t^*_1,\dots,t^*_{n-2},t^*_{n-1})
=F_{n-1}(t_{n-1}^*)$. In order to complete the induction step
%, i.e.\ to obtain Eq.~\ref{recurf} 
it remains to prove that $t^*_{n-1}<x-1$.

The necessary condition of extremum yields
$$
 (t^*_{n-1}+1)^2=x(t^*_{n-2}+1).
$$
Suppose that $t^*_{n-1}+1\geq x$. Then $t^*_{n-1}\leq t^*_{n-2}$. If $n=2$, we get $1<x\leq t^*_1+1\leq t^*_0+1=1$, a contradiction. 

If $n>2$ and $t^*_{n-1}>1$, then by part (II) of the induction hypothesis the last component $t^*_{n-2}$ in the minimizer for $F_{n-1}(t^*_{n-1})$ satisfies the inequality $t^*_{n-2}<t^*_{n-1}-1$.

Finally, if $n>2$ and $t^*_{n-1}\leq 1$, then by Lemma~\ref{lem:minimizer0} $t^*_{n-2}=0$.

In both cases, we arrive to a contradiction with assumption
$t^*_{n-1}\leq t^*_{n-2}$. 

The induction step is complete.

\smallskip
(c) By parts (a) and (b) we get $F_n(x)=\tilde F(x)$
for any integer $n\geq x$. Since the sequence $F_n(x)$
is monotone (Proposition~\ref{prop:monotfn}(b)), we conclude that $F(x)=\tilde F(x)$.
\end{proof}

\subsection%
{General look: least action formulation}
\label{ssec:leastaction}

\iffalse
%%%%%%%%%%%%%%%%%%%%%%
Consider the optimization problem in the form~\eqref{op_main}. A {\em minimizer} is .
The existence of a minimizer 
has been proved in Proposition~\ref{prop:altoptfn}.
Since there are no constraints in the form of equalities here, there are two possibilities for a minimizer:
(i) a minimizer %$(t_1,\dots,t_{n-1})$ 
lies in the interior of the admissible domain;
% , that is all $t_i>0$; 
then it is a critical point of the 
% where all partial derivatives of the 
objective function;
% turn to $0$, 
(ii) a minimizer is a boundary point, that is, at least one of $t_j$ ($1\leq j\leq n-1$) is zero. 

Case (ii) yields a minimization problem with fewer variables and its critical points are to be analysed.

For a particular value of $n$ there may be more than one candidate point for a minimizer, and subsequent minimization over $n$ (to find a minimizer for $f(x)$)
potentially involves another act of selection. 
\fi
%%%%%%%%%%%%%%%%%%%%%

%Let us temporarily 
The boundary value formulation in Sec.~\ref{ssec:bconst} and the dynamic programming formulation in Sec.~\ref{ssec:dynprog} do not by themselved offer
much of a progress in finding the asymptotics of $F(x)$.
We will need to study minimizers. For that purpose it
is useful to look at the optimization problem from a more general point of view.

Consider a minimization problem with objective function of the form
\beq{Sn-general}
\mathcal{S}_n(\mathbf{t})=\sum_{j=1}^n L(t_{j-1},t_j).
\eeq 
By analogy with classical mechanics, we call the function $L(\cdot,\cdot)$ the {\em Lagrangian}.  
A {\em virtual trajectory}\ $\mathbf{t}=(t_0,\dots,t_n)$
is subject to the constraints $t_0=x_0$, $t_n=x$;
the boundary values $x_0$ and $x$ are assumed given.
The function $\mathcal{S}_n(\mathbf{t})$ (``integral of the Lagrangian along a virtual trajectory'') is the analog of {\em action} in mechanics. The object of our attention is the extremal value
\beq{fn-general}
 g_n(x_0,x)=\min_{(t_0,\dots,t_n)\mid\, t_0=x_0,\,t_n=x}
\mathcal{S}_n(t_0,\dots,t_n).
\eeq
We recognize in this setting ``the least action principle'' with discrete time.

In mechanics, one usually pays little attention to the minimum value of the action as such; the goal is to determine the extremal trajectory, which describes the actual evolution of a mechanical system. 
Here, we are originally interested in extremal values $g_n$, but the focus will eventually shift to extremal trajectories.

We assume that the Lagrangian is differentiable in its domain and denote $L_1(u,v)=\partial L(u,v)/\partial u$
and $L_2(u,v)=\partial L(u,v)/\partial v$.

Let $\mathbf{t}^*$ be an extremal (more precisely, minimizing) trajectory
for the problem \eqref{fn-general}. Suppose that $(t^*_1,\dots,t^*_{n-1})$ is an interior point of the domain of the function $\mathcal{S}_n|_{t_0=x_0,\;t_n=x}$.
The necessary conditions of extremum $\partial \mathcal{S}_n/\partial t_j|_{\mathbf t=\mathbf t^*}=0$ ($1\leq j\leq n-1$), known in mechanics as the Euler-Lagrange equations, in the expanded form read
\beq{ELeq-general}
 L_2(t_{j-1},t_j)+
 L_1(t_{j},t_{j+1})=0,\quad j=1,\dots,n-1.
\eeq

Suppose that the equation $L_1(u,v)+p=0$ with given $u$ and $p$ is uniquely solvable for $v$ in all occurrences and denote the solution as $v=V(u,p)$.
Then the system \eqref{ELeq-general} can be written in the form of the second order recurrence equations
\beq{ELeq-recur}
 t_{j+1}=V(t_j,L_2(t_{j-1},t_j)),\quad j=1,\dots,n-1.
\eeq
More precisely, we have the boundary value problem comprising the equations \eqref{ELeq-recur} and the
boundary conditions
$$
 t_0=x_0,\qquad t_n=x.
$$
Introduce a free parameter $\tau$,
set $T_0(\tau)=x_0$, $T_1(\tau)=\tau$, and define  further functions $T_j(\tau)$ by 
%the recurrence, which parallels \eqref{ELeq-recur}:
making the substitutions 
$t_i\mapsto T_i(\tau)$ in the recurrence \eqref{ELeq-recur}:
\beq{defTj}
T_{j+1}(\tau)=V(T_j(\tau),L_2(T_{j-1}(\tau),T_j(\tau))), \quad j=1,\dots,n-1.
\eeq
The boundary value problem can be in principle solved by the shooting method: 
%solving 
the equation 
%(easy to say!)
%\beq{Tnx}
$
T_n(\tau)=x,
$
%\eeq 
%we 
determines the value of $\tau$ and hence the whole extremal trajectory. The existence and uniqueness of solution are two immediate concerns. We will attend to them in our concrete case --- first, in Section~\ref{sec:minimizers}, by presenting numerical results and revealing fine points, and then, analytically.
%leaving interpretation of this step for later.

The dynamic programming approach to the minimization
problem \eqref{fn-general} leads to the recurrence
\beq{recg}
%$$
 g_{j+1}(x)=\min_y (g_{j}(y)+L(y,x)),\quad j\geq 1,
%$$
\eeq
with initial condition  
%\beq{g1}
$$
 g_1(x)=L(x_0,x).
$$
%\eeq
An extremal trajectory defined by the recurrence \eqref{defTj} induces the recurrence
\beq{Gn-rec}
 G_{j+1}(\tau)=G_j(\tau)+L(T_j(\tau),T_{j+1}(\tau)),
\quad j\geq 0,
\eeq
with initial condition  
%\beq{G0}
$$
 G_0(\tau)=0.
$$
%\eeq
In a simple scenario, we expect that $G_n(\tau)$ is the minimum value $g_n(x_0,x)$ sought in \eqref{fn-general}. However, if there are several 
solutions of the equation $T_n(\tau)=x$ and
several extremal trajectories, one needs to pick the minimum among the several corresponding %different 
values $G_n(\tau)$.  Also one should not ignore the possibility that the minimum value may be attained at the boundary of the parameter domain.

\begin{remark}
%We note in passing that, 
Upon the change of variables $(u,v)\mapsto (u,p)$, $p=L_2(u,v)$ applied to all pairs $(u,v)=(t_{j},t_{j+1})$, $j=0,\dots,n-1$,
 the recurrence \eqref{ELeq-recur} can be recast in the form 
\beq{EL-H}
 p_{j+1}=L_2(t_j,V(t_j,p_j)),
\qquad t_{j+1}=V(t_j,p_j),
\qquad j=1,\dots,n-1.
\eeq
This can be viewed as a ``Hamiltonian system with discrete time'' referring to the fact that
the map $(t_j,p_j)\mapsto(t_{j+1},p_{j+1})$ is symplectic, that is, its Jacobian is equal to $1$. 
Cf.~\cite[end of Sec.~9.1]{McDuff-Salomon_1998}.
%\end{remark}

In the present paper, the Lagrangian is
$$
 L(u,v)=\frac{v}{u+1}.
$$

%\begin{remark}
Incidentally, another asymptotic problem solved recently by the author \cite{Sadov_2021} involves the Lagrangian 
$$
 L(u,v)=u+\frac{1+v}{u}
$$
and the boundary conditions $p_0=t_n=0$ for the system in the Hamiltonian form \eqref{EL-H}.%
%are fixed
%\footnote{One has to reverse the order or components of the 
%vector $\mathbf{t}$ in \cite{Sadov_2021} to make the
%comparison exact.}
\footnote{The comparison applies 
to the trajectory $(t_0,\dots,t_n)$ 
%$\mathbf{t}$ 
in \cite{Sadov_2021} %is 
re-indexed backwards.}
%in the reverse order.}
(Unlike in the present case, there is no variable $x$; the integer $n$ is the only parameter.)
Crusial to the asymptotic analysis in \cite{Sadov_2021} is the presence of a fixed point  
of the map $(u_j,p_j)\mapsto(u_{j+1},p_{j+1})$, which is not the case here. 
%
%In both cases the Lagrangian $L(u,v)$ is linear in $u$, Eq.~\eqref{ELj} is uniquely solvable for $t_{j+1}$; however, 
%technical parallels do not go much further. 
\end{remark}

%--------------------------------------------------
\section{Asymptotics that allow for simple proofs}
\label{sec:simpleproofs}

Here we prove two results that do not rely on the analysis of extremal trajectories. The material of this section is not used in the sequel.

%--------------------------------------------------
\subsection{Proof of Theorem~\ref{thm:AMGM}}
%Asymptotics of the function $f^{(0)}(x)$
\label{ssec:asf0}

%\begin{proof} 
We write $u=\log x$, as in the formulation of the Theorem.

Let $u_n=n(n+1)\log(1+n^{-1})=n+1/2+O(1/n)$. We have:
$F^{(0)}_{n+1}(x)>F^{(0)}_n(x)$ if and only if $u>u_n$. 

Suppose $n>1$, $u\in I_n=[u_{n-1},u_n]$.
Then $F^{(0)}(x)=F^{(0)}_{n}(x)=u\varphi(u/n)$,
where $\varphi(t)=t^{-1}e^t$. The function $\varphi(t)$ has the minimum at $t=1$
and $\varphi(1+\eps)=e(1+\eps^2/2)+O(\eps^3)$ as $\eps\to 0$. 

Put $u=n+s$, $|s|\leq 1/2+O(1/n)$. We have $\varphi(u/n)=e+(e/2)(s/n)^2+O(n^{-3})$. 

Note that $|s|=|\left\langle  u\right\rangle|+O(1/n)$;
indeed, $s\neq \left\langle  u\right\rangle$ can happen
only near half-integer values of $n$, where $s$ is close to $\pm 1/2$.

Since $n^{-1}=u^{-1}+O(u^{-2})$, we 
obtain the asymptotics \eqref{asf0}. 
\qed
%\end{proof}

\begin{remark}
The left and right derivative numbers of $F^{(0)}(x)$
at the endpoints of the intervals $I_n$ are different.
In Section~\ref{ssec:experiment} we will see that the situation with function $F(x)$ is more interesting in this respect.
\end{remark}

%%%%%%%%%
\iffalse
We have $f^{(0)}(x)=\log x\cdot \min_{\ZZ\ni n\geq 1}\varphi(\log x/n)$,
where $\varphi(t)=t^{-1}e^t$. 
The function $\varphi(t)$ has the minimum at $t=1$
and $\varphi(1)=e$. 

Put $u=\log x$.
The value $n_*=n_*(x)$ for which
$f^{(0)}(x)=u\varphi(u/n_*)$ is always one of the two integers nearest to $u$: either $n_*=\lfloor u\rfloor$ or $n_*=\lceil u\rceil$. Hence
$u/n_*=1+\epsilon$, where $\epsilon=O(1/u)$ but $\epsilon\neq o(1/u)$.

Since $\varphi(1+\epsilon)=e+c\epsilon^2+ o(\epsilon^2)$ with $c>0$, we conclude that $\varphi(u/n_*)=e+O(1/u^2)$ and the 
remainder estimate is optimal. The claimed asymptotics 
of $f^{(0)}(x)$ follows.
\fi
%%%%%%%%%%%%%%%%%

\subsection{Crude asymptotics of the function \texorpdfstring{$F(x)$}{F(x)}}
\label{ssec:crudeas}

\begin{prop}
\label{prop:crudeas}
%%%%%%%%%
\iffalse
Let $f(x)$ be a real-valued function defined for $x> 0$ recursively as follows:
$$
 f(x)=x, \qquad 0< x\leq 1,
$$
and
%$$
\begin{equation}
\label{recg}
 f(x)=\inf_{0<y\leq x-1}\left(f(y)+\frac{x}{y+1}\right),
 \qquad x>1.
\end{equation}
\fi
%%%%%%%%%%
The function $F(x)$ has the asymptotic behavior
$F(x)=e\log x+O(1)$ as $x\to\infty$. Specifically,
if $x\geq 1$, then
$F(x)$ satisfies the inequalities
%$$
\begin{equation}
\label{rbndf}
-a_1+\frac{b_1}{x+1}\leq
F(x)-e\log(x+1)\leq -a_2+\frac{b_2}{x+1},
\end{equation}
%$$
where 
$b_1\approx 1.77$
%1.76928$ 
is the (smaller of the two) root of the equation 
%$$
\begin{equation}
\label{eqb1}
2\log\frac{b+1}{2}=\frac{b}{e};
% Larger root: 8.431759467
\end{equation}
the constant $a_1$
is defined by
$$
a_1=\max_{0\leq x\leq 1}\left(e\log(x+1)+\frac{b_1}{x+1}-x\right)
\approx 1.78, %1.78355,  
%inf=1.767427630, sup=1.783555720
$$
and $a_2=b_2=e/(e-1)\approx 1.58$. %1.58198$.
\end{prop}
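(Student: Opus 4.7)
The plan is to bracket $F$ between two barriers of the common form
$\Phi_{a,b}(x) = e\log(x+1) - a + b/(x+1)$ and verify each one-sided inequality by induction on the interval $(k, k+1]$ containing $x$, using the functional equation~\eqref{fe1} for $F = \tilde F$ (Proposition~\ref{prop:funeqf}). The great advantage of this ansatz is that the Bellman operator $T\phi(x) \eqbydef \inf_{0 < y < x - 1}(\phi(y) + x/(y+1))$ acts on $\Phi_{a,b}$ in closed form: the map $y \mapsto \Phi_{a,b}(y) + x/(y+1) = e\log(y+1) - a + (b+x)/(y+1)$ has a unique critical point $y^* = (b+x)/e - 1$ with value $e\log(b+x) - a$, so
\begin{equation*}
 T\Phi_{a,b}(x) = e\log(b+x) - a \qquad \text{when $y^*$ is admissible.}
\end{equation*}
The required comparison between $T\Phi_{a,b}(x)$ and $\Phi_{a,b}(x)$ then reduces, via the change of variable $u = 1/(x+1)\in(0,1/2]$, to the scalar inequality between $e\log(1 + (b-1)u)$ and $bu$ on $(0, 1/2]$.

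For the \emph{upper} bound, I would set $b_2 = e/(e-1)$ so that $e(b_2 - 1) = b_2$. Since $e\log(1 + (b_2 - 1)u)$ is concave in $u$ with value $0$ and slope $b_2$ at $u = 0$, it stays weakly below the line $b_2 u$ for all $u \geq 0$; thus $T\Phi_+ \leq \Phi_+$. Taking $a_2 = b_2$ makes $\Phi_+(0) = 0$, and a brief monotonicity check on $[0,1]$ yields $x \leq \Phi_+(x)$ there, which is the base case. Standard induction then propagates $F(x) \leq \Phi_+(x)$ to all $x > 0$.

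For the \emph{lower} bound the inequality reverses: one needs $e\log(1 + (b-1)u) \geq bu$ on $(0, 1/2]$. By concavity of the LHS, this is implied by the chord from $(0, 0)$ to $(1/2, e\log((b+1)/2))$ lying weakly above the line $bu$, i.e., its slope $2e\log((b+1)/2)$ being at least $b$; the saturating choice is exactly $b = b_1$, the smaller root of~\eqref{eqb1}. Choosing the smaller rather than the larger root also yields the smallest possible $a_1$, since $a_1(b) = \max_{0 \leq x \leq 1}(e\log(x+1) + b/(x+1) - x)$ is nondecreasing in $b$. This value of $a_1$ is exactly what is needed for the base case $\Phi_-(x) \leq x$ on $(0, 1]$, and the same inductive propagation then delivers $F(x) \geq \Phi_-(x)$ globally.

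The \emph{main delicate point} will be a narrow transitional range just above $x = 1$, where the interior critical point $y^* = (b+x)/e - 1$ falls outside the admissible set $(0, x-1)$ in~\eqref{fe1}. There the infimum defining $T\Phi_{a,b}(x)$ is realized near the boundary, and the barrier inequalities must be verified directly using $F(y) = y$ on $(0, 1]$; this amounts to confirming that the chosen constants $a_i, b_i$ are compatible on the base interval before the closed-form asymptotic regime kicks in. After this finite case-analysis, the concavity-based reduction described above handles all larger $x$ uniformly, and both inequalities of~\eqref{rbndf} follow.
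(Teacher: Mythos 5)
Your proposal follows essentially the same route as the paper: both establish $\Phi_- \leq F \leq \Phi_+$ with the barrier ansatz $\Phi_{a,b}(x)=e\log(x+1)-a+b/(x+1)$, by induction over the intervals $(n,n+1]$ via the functional equation \eqref{fe1} and the closed-form minimization $\min_{t>0}(e\log t+u/t)=e\log u$, reducing each bound to the same scalar concavity inequality between $e\log(1+(b-1)u)$ and $bu$ in $u=1/(x+1)\in(0,1/2]$, with $b_2$ fixed by the tangent-at-$0$ condition $e(b_2-1)=b_2$ and $b_1$ by the chord condition \eqref{eqb1}. The one subtlety you add is real and worth keeping: for the upper bound the paper verifies only $y^*<x-1$ (via $t^*<x$) but not $y^*>0$, which in fact fails for $1\le x<e-b_2\approx1.14$, so the short direct check of $\Phi_+(x)\ge x$ on that transitional range that you flag is needed to close the induction step from $n=1$ to $n=2$.
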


\begin{proof}
We prove that \eqref{rbndf} is true for $0<x\leq n$ by induction on $n$.  For $n=1$ the left inequality is fullfilled by the definition of $a_1$, while the left inequality is due to the fact that 
the function $y\mapsto  e\log(x+1)+b_2/(x+1)-x$ attains it minimum on $[0,1]$ at $x=0$.
%attained at $y=0$ and equals $b_2$.

For the induction step consider the function
$t\mapsto e\log t+u/t$ with given $u>0$. The critical
point is $t^*=u/e$, hence
\beq{minf_tmp}
 \min_{t>0}\left(e\log t+\frac{u}{t}\right)=
e\log t^*+\frac{u}{t^*}=
e\log u.
\eeq

Let us first prove the left inequality
in \eqref{rbndf} for $x\leq n+1$.
Using the inductive assumption and the formula \eqref{fe1} from Proposition~\ref{prop:funeqf}, we get 
$$
 F(x)\geq e\log(y+1)-a_1+\frac{b_1}{y+1}+\frac{x}{y+1}
$$
for any $y\in(0,x-1]$. By \eqref{minf_tmp} with $u=b_1+x$, we have
$$
 F(x)\geq e \log(b_1+x)-a_1.
$$
%To complete the induction step 
It remains to check the inequality
$$
e\log(b_1+x)\geq e\log(x+1)+\frac{b_1}{x+1};
$$
equivalently,
$$
\frac{\log(1+(b_1-1)s)}{s}\geq \frac{b_1}{ e},
$$
where
$s=(x+1)^{-1}$. 

The left-hand side is a decreasing function of $s$.  We may assume that $x>1$, hence $s<1/2$.
Therefore
$$
\frac{\log(1+(b_1-1)s)}{s}> 2\log(1+(b_1-1)/2).
$$
By  definition of $b_1$, the right-hand side equals $b_1/e$. The proof of the lower estimate for $F(x)$ is complete.

We prove the right inequality
in \eqref{rbndf} for $x\leq n+1$ similarly.
Again, using the inductive assumption and the formula \eqref{fe1}, we get 
$$
 F(x)\leq \min_{y<x-1} \left(e\log(y+1)-a_2+\frac{b_2}{y+1}+\frac{x}{y+1}\right).
$$
Put $u=b_2+x$ in \eqref{minf_tmp}. The critical point
$t^*=(b_2+x)/e$ corresponds to $y^*=t^*+1$. 
Now, $y^*$ lies in the admissible interval 
%of minimization 
$0<y<x-1$ provided that
$t_*< x$, that is, $x> b_2/(e-1)\approx 0.92$.
This condition is fulfilled, since we assume $x\geq 1$.
So
$$
 F(x)\leq e\log(b_2+x)-a_2.
$$
It remains to prove that
$$
e\log(b_2+x)\leq e\log(x+1)+\frac{b_2}{x+1}.
$$
Subtracting $e\log(x+1)$ from both sides, we estimate:
$$
 e\log\frac{x+b_2}{x+1}=e\log\left(1+\frac{b_2-1}{x+1}\right)< e\frac{b_2-1}{x+1}.
$$
Since $e(b_2-1)=b_2$, the proof is complete.
\end{proof}

\begin{remark}
\label{rem:ill-mainasym}
 The double-sided estimate
\eqref{rbndf} agrees with %the 
asymptotic formula \eqref{asf}, since $a_2<A<a_1$, while the rational
terms $\mathrm{const}\cdot(x+1)^{-1}$, as well as the difference between $\log x$ and $\log(x+1)$, are absorbed by the remainder. %in \eqref{asf1}.
A freedom to choose a particular form of asymptotically negligible terms can be used to 
devise an approximation to $F(x)$ that would be practical not only for large values of $x$ but also for small ones.

The approximation with an empirically chosen 
correction $b_2/(x+1)$
%$$
% f_{\rm approx}(x)= e\ln\left(x+b_2\right)-A+\frac{0.446}{(x+1)^2},
%$$
%
\begin{equation}
\label{empir_approx}
F(x)\approx
F_{\rm approx}(x)\eqbydef e\ln(x+1)-A+\frac{b_2}{x+1},
\end{equation}
is illustrated in Figure~\ref{fig:f1corr}(a).
%(The effect of the term $b_2/(x+1)$ can be appreciated by comparison with Fig.~\ref{fig:xieta1-6} from \S~\ref{ssec:experiment}.)
Note that if we had plotted only the initial part of the graph, up to approximately $\log(x+1)=8$ (shown in red), then it could
lead to a misleading assumption that already the second term of the asymptotics, that of order $O(1)$, is periodic.
The right part of the graph shows that the error of approximation in fact decays at a rate $O((\log x)^{-1})$,
as the theory predicts. The equation of the dashed majorizing curve is $y=(e/8)(\log(x+1)+7)^{-1}$. (Here $e/8=e/2\cdot\max\ab{\cdot}^2$. The ``magic'' number $7$ makes the enveloping curve closer to the graph without affecting the asymptotics up to terms of order $O((\log x)^{-1})$.)

The right part of the same graph is shown in larger scale on Fig.~\ref{fig:f1corr}(b) in comparison with third term of asymptotics from Theorem~\ref{thm:main}.
\end{remark}

%----------------------
\iffalse
\begin{figure}
\begin{picture}(260,190)
\put(0,0){\includegraphics[scale=0.5]{%empirical_approximation_error.eps}
empirical_approximation_error-logx}
}
% Origin: file xieta.mw
\put(315,25){\small $\log (x+1)$}
\end{picture}
%
\caption{Difference $f(x)-f_{\rm approx}(x)$ vs $\log(x+1)$} 
%for $0<x<37$}
\label{fig:emperror}
\end{figure}
\fi
%----------------------

\begin{figure}
\begin{picture}(311,520)
%\put(0,-5)%
\put(50,260)%
{\includegraphics[scale=0.5]{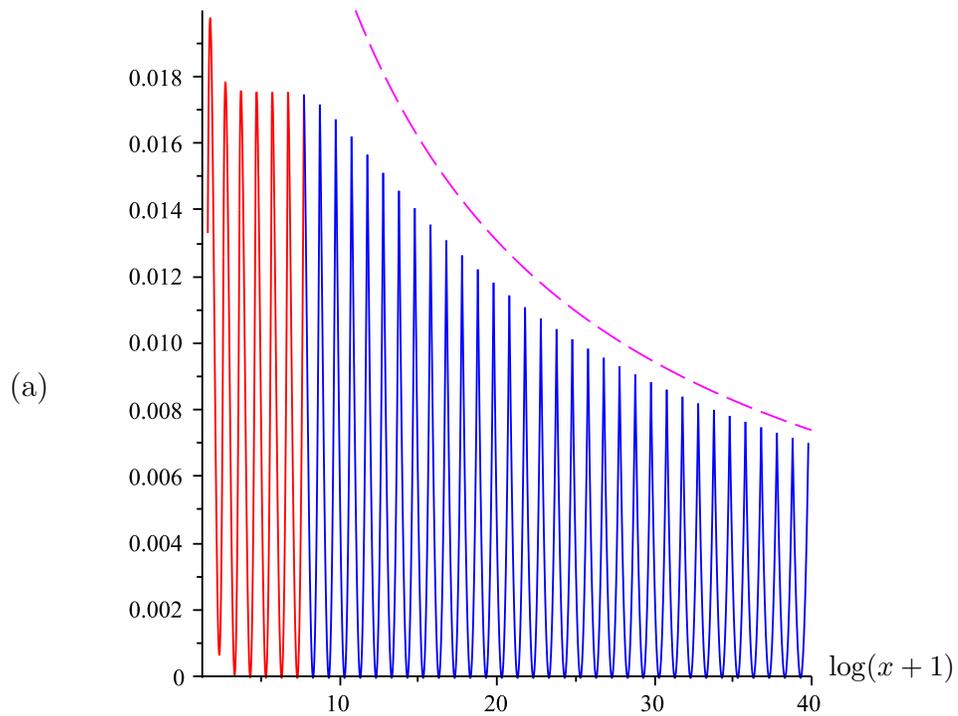}}
% Origing: file xieta.mw
%\put(265,14)%
\put(315,274)%
{\small $\log (x+1)$}
%\end{picture}
%
%\caption{The difference $F(x)-F_{\rm approx}(x)$ vs $\log(x+1)$}
%\label{fig:emperror1}
%\end{figure}

%\begin{figure}
%\begin{picture}(263,260)
\put(50,0){\includegraphics[trim={0 0 0 50pt},clip,scale=0.5]{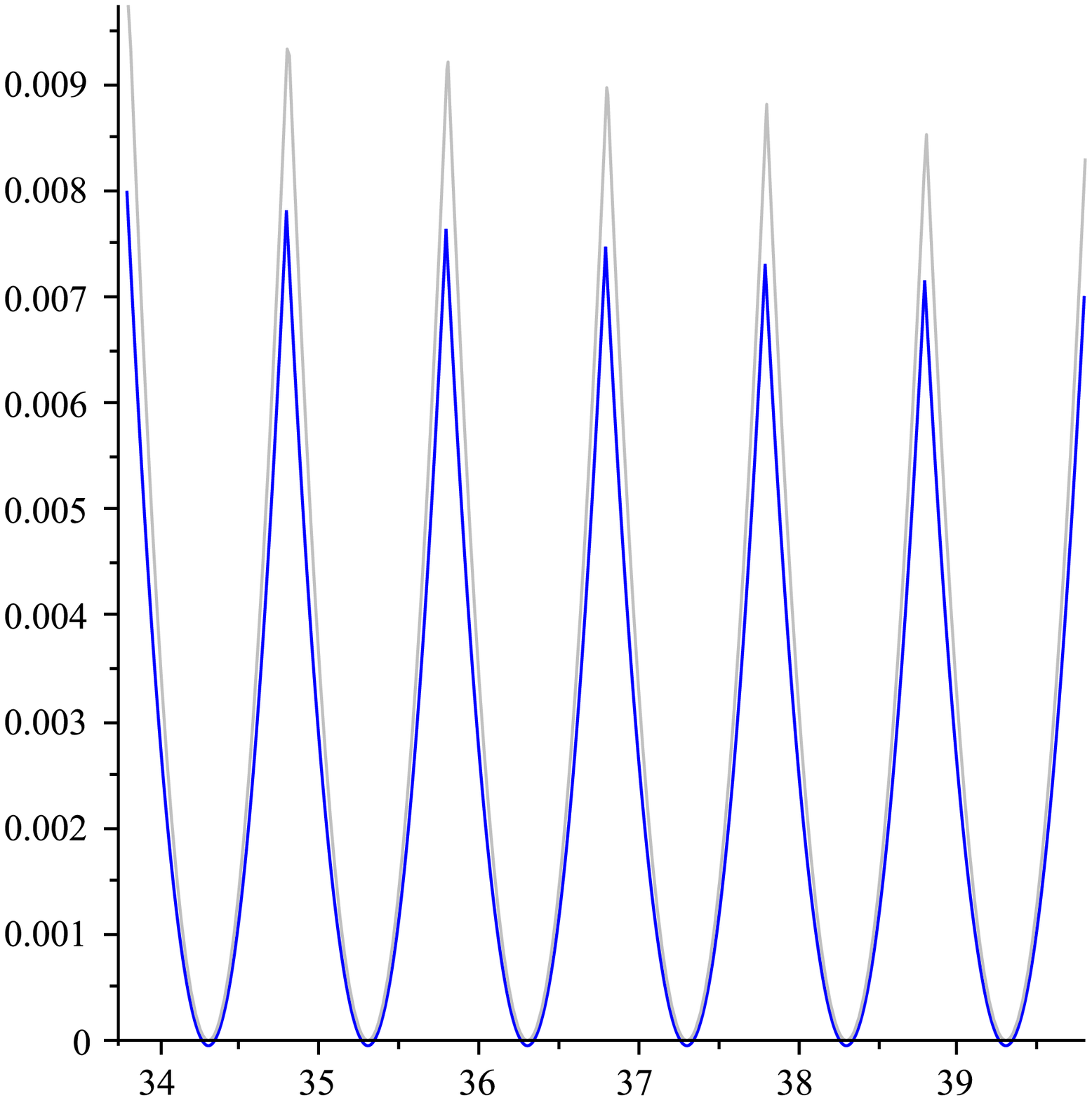}}
% Origing: file xieta.mw
\put(315,16){$\log(x+1)$}
%-------------
\put(5,380){(a)}
\put(5,120){(b)}
\end{picture}
\caption{%To Proposition~\ref{prop:crudeas} and Theorem~\ref{thm:main}. 
(a) The error of empirical approximation \eqref{empir_approx}. (b) Blue line: the same error; gray line: the correction term $\Phi(u)/u$ from Theorem~\ref{thm:main}.}
%Remark \ref{rem:ill-mainasym}
\label{fig:f1corr}
\end{figure}

%------------------------------------------------

\section{Extremal trajectories and minimizers} %; heuristics}
\label{sec:minimizers}

Here we make initial steps towards the reduction of Theorem~\ref{thm:main} to Theorem~\ref{thm:paramcurves}.

%A {\em minimizer}\ is a point where the objective function attains its minimum. 
Taking Eq.~\eqref{optfn} as the definition of $F_n(x)$, the minimizer candidates are critical points of $\hat S_n(\mathbf{t})|_{t_0=0,t_n=x}$ inside the admissible domain $\{t_i>0,\,i=1,\dots,n-1\}$ and on its boundary, where $t_i=0$ for at least one $i\geq 1$.

%The scheme of calculations is expounded in greater generality in \S~\ref{ssec:leastaction} to emphasize 
%relations not depending on the specifics of the problem at hand.

%To evaluate $f_n(x)$, we want to identify candidate points  among which a minimizer (a point
%where the objective function attains its minimum) is to be selected: these are critical points inside the admissible domain and on its boundary. 

On the other hand, from Proposition~\ref{prop:funeqf}(c) we know that $F(x)=F_n(x)$ for all sufficiently large $n$
(e.g. $n\geq x$), and we will observe the stabilization of
minimizers in a precise componentwise sense. 

In this section we will use the termiology of the general least action problem with discrete time as described in Section~\ref{ssec:leastaction}.

\subsection{Extremal trajectories: basic properties}
\label{ssec:basicrelations}

The Euler-Lagrange equations \eqref{ELeq-recur}
for the concrete problem \eqref{optfn}
become
$$
\frac{1}{t_{j-1}+1}-\frac{t_{j+1}}{(t_{j}+1)^2}=0.
$$ 
The recurrence relations
\eqref{ELeq-recur} and \eqref{Gn-rec} take the form
$$
 T_{j+1}(\tau)=\frac{(T_{j}(\tau)+1)^2}{T_{j-1}(\tau)+1},
\qquad
G_{j+1}(\tau)=G_j(\tau)+\frac{T_{j+1}(\tau)}{T_j(\tau)+1}.
%,\quad j=1,\dots,n.
$$

To make the recurrence relations more compact, we introduce the functions where the parameter and the values are shifted by $1$: 
% and transparent
$$
 \xi_j(t)=T_j(t-1)+1,
\qquad 
 \eta_j(t)=G_j(t-1)+1, \quad t\geq 1.
$$ 
Then
\begin{equation}
\label{recxi}
\xi_{j+1}(t)=\frac{\xi_{j}^2(t)}{\xi_{j-1}(t)}+1.
\end{equation}
Introduce also the auxiliary functions
\begin{equation}
\label{defalpha}
 \alpha_j(t)=\frac{\xi_j(t)}{\xi_{j-1}(t)}.
\end{equation}
%In calculations and reasoning where 
In places where $t$ does not change, we will often simply write $\xi_j$, $\eta_j$, $\alpha_j$. 

The governing system of recurrence relations becomes
\begin{equation}
\label{recrelalxi}
\begin{array}{l}
\dst
\alpha_{j+1}=\alpha_j+\frac{1}{\xi_j},
\\[2ex]
%\xi_{j}=\alpha_j \xi_{j-1}.
\xi_{j+1}=\alpha_{j+1} \xi_{j}=\alpha_{j}\xi_j+1.
\end{array}
\end{equation}
It is complemented by the subordinate recurrence
\begin{equation}
\label{recreleta}
\eta_{j+1}
%=\eta_j+\alpha_{j+1}-\frac{1}{\xi_j}
=\eta_j+\alpha_{j}.
\end{equation}

Clearly, all the introduced functions are rational functions of $t$.
Here are the first few, including the initial values ($n=0,1$) set by definition:

\bigskip\noindent
\begin{tabular}{c|c|c|c}
$n$ & $\alpha_n$ & $\xi_n$ & $\eta_n$ \\
\hline
$0$ & $t-1$    & $1$      & $1$        \\[1ex]
%\hline
$1$ & $t$ & $t$ & $t$ \\[2ex]
%\hline
$2$ & $t+t^{-1}$ & $t^2+1$ & $2t$\\[2ex]
%\hline
$3$ & $\dst\frac{t^2+1}{t}+\frac{1}{t^2+1}$ & $t^3+2t+1+t^{-1}$ & $3t+t^{-1}$\\[2ex]
%\hline
$4$ & $\cdots$ & $\dst\frac{(t^2+1)^{3}}{t^2}+\frac{2(t^2+1)}{t}+\frac{1}{t^2+1}+1$ & $\dst\frac{4t^4+6t^2+t+2}{t(t^2+1)}$\\
\end{tabular}

\bigskip 
%\medskip

Some basice consequences of the recurrence relations
\eqref{recxi}--\eqref{recreleta} are collected in the following proposition.

\begin{prop}
\label{prop:xieta-basic}
{\rm(a)} The introduced rational functions behave at inifinity as follows (for any fixed $n$): $\alpha_n(t)\sim t$, $\xi_n(t)\sim t^n$,
$\eta_n(t)\sim nt$.

\smallskip
{\rm(b)} For any $t$, the sequences $(\alpha_n(t))$, 
$(\xi_n(t))$ and $(\eta_n(t))$ are increasing.
The inequalities
$
 \alpha_n(t)\geq 2 
$ $(n\geq 1)$
and 
$
 \xi_n(t)\geq 2^{n-1}
$
$(n\geq 2)$
hold true. Also $\alpha_n(t)<3$ for all $n$.
Consequently, there exists the finite limit
$$
 \ali(t)=\lim_{n\to\infty}\alpha_n(t).
$$

\smallskip
{\rm(c)} For $n\geq 1$, define  $\Xmin_n=\min_{t\geq 1}\xi_n(t)$.
Then $\xi_n(\cdot)$ maps $[1,\infty)$ to $[\Xmin_n,\infty)$. The sequence $(\Xmin_n)$ is increasing and $\Xmin_n\geq 2^{n-1}$. 

\smallskip
{\rm(d)} For any $n\geq 1$ the following relations hold true:
\begin{equation}
\label{ic12}
\xi_n(1)=\xi_{n-1}(2), 
\qquad
\eta_n(1)=\eta_{n-1}(2),
\qquad
\alpha_n(1)=\alpha_{n-1}(2),
\end{equation}
and
\begin{equation}
\label{dxieta}
\eta'_n(t)=\frac{\xi'_n(t)}{\xi_{n-1}(t)}.
\end{equation}
%
%\smallskip
%{\rm(e)} The inequality $\xi_n'(t)/\xi_n(t)\leq n/t$ holds true for any $t\geq 1$ and $n\geq 0$.
\end{prop}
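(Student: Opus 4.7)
The overall strategy is induction on $n$, exploiting the fact that the recurrences \eqref{recxi}--\eqref{recreleta} are $t$-independent in form, so only the initial data and a few algebraic manipulations carry the weight.

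For part (a), I would run a simultaneous induction, with base cases $\xi_0 \equiv 1$, $\xi_1(t) = t$, $\eta_0 \equiv 1$, $\eta_1(t) = t$ holding by definition. Assuming $\xi_{n-1}(t) \sim t^{n-1}$ and $\xi_n(t) \sim t^n$ as $t\to\infty$, the recurrence $\xi_{n+1} = \xi_n^2/\xi_{n-1} + 1$ yields $\xi_{n+1} \sim t^{n+1}$; consequently $\alpha_n = \xi_n/\xi_{n-1} \sim t$, and iterating \eqref{recreleta} gives $\eta_n(t) = 1 + \sum_{j=0}^{n-1}\alpha_j(t) \sim nt$, since each summand past $j=0$ is asymptotic to $t$.

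For part (b), monotonicity is immediate from the recurrences: $\alpha_{n+1} - \alpha_n = \xi_n^{-1} > 0$, $\eta_{n+1} - \eta_n = \alpha_n > 0$, and once $\alpha_n \geq 1$ the identity $\xi_{n+1} = \alpha_n\xi_n + 1$ forces $\xi_{n+1} > \xi_n$. The bound $\alpha_n(t) \geq 2$ follows from AM-GM applied to $\alpha_2(t) = t + t^{-1}$, together with monotonicity of $(\alpha_n)$ in $n$; the geometric lower bound $\xi_n(t) \geq 2^{n-1}$ then propagates by induction from $\xi_{n+1} \geq 2\xi_n$. For the upper bound, I would telescope to obtain $\alpha_n(t) = \alpha_1(t) + \sum_{j=1}^{n-1}\xi_j(t)^{-1}$ and dominate the tail by a geometric series of ratio $\tfrac{1}{2}$; this simultaneously yields $\alpha_n(t) < 3$ (in the relevant range of $t$) and shows the sequence $(\alpha_n(t))$ is Cauchy, so $\ali(t)$ exists as a monotone limit.

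Part (c) is a direct consequence of (a) and (b). The rational function $\xi_n$ is continuous on $[1,\infty)$ and tends to $+\infty$ as $t\to\infty$ by (a), so its infimum $\Xmin_n$ is attained, and the intermediate value theorem gives surjectivity onto $[\Xmin_n,\infty)$. The estimate $\Xmin_n \geq 2^{n-1}$ is just the pointwise bound from (b), and $\Xmin_{n+1} \geq \Xmin_n$ follows from $\xi_{n+1}(t) \geq \xi_n(t)$. For part (d), the boundary identities \eqref{ic12} rest on the observation that \eqref{recxi} does not involve $t$ explicitly, so the shifted sequence $j \mapsto \xi_{j+1}(1)$ and the sequence $j \mapsto \xi_j(2)$ satisfy the same second-order recursion; verifying that they coincide in their first two entries ($\xi_1(1)=\xi_0(2)=1$ and $\xi_2(1)=\xi_1(2)=2$) forces equality for all $j$ by induction, and the same argument handles $\alpha$ and $\eta$. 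The derivative identity \eqref{dxieta} is again proved by induction on $n$: differentiating both defining recurrences and substituting the inductive hypothesis $\eta_n' = \xi_n'/\xi_{n-1}$ reduces $\eta_{n+1}' = \xi_{n+1}'/\xi_n$ to an algebraic identity that collapses upon using $\alpha_n = \xi_n/\xi_{n-1}$.

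The only point requiring any care is the bound $\alpha_n(t) < 3$: because $\alpha_1(t) = t$, this bound is only meaningful when $t$ is suitably restricted, and a sharp version requires an explicit estimate of the tail $\sum_{j\geq 1}\xi_j(t)^{-1}$ on top of $\alpha_1(t)$, presumably certified by tabulating the first few $\xi_j$.
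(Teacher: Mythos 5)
Your proposal follows essentially the same inductive route as the paper for all four parts: (a) by induction on the recurrence, (b) by monotonicity of the recurrences plus telescoping $\alpha_n$ against a geometric tail, (c) as a consequence of (b) (the paper derives $\Xmin_{n+1}\geq 2\Xmin_n+1$ directly, you use $\xi_{n+1}(t)>\xi_n(t)$ — both give strict monotonicity), and (d) by the shift/initial-data argument and an inductive differentiation. Your closing caveat on the constant $3$ is the right instinct: telescoping from $\alpha_1$ (or even from $\alpha_2$, as the paper does) only yields something like $\alpha_n<t+t^{-1}+1\leq 7/2$ on $[1,2]$ rather than a clean $3$; but nothing downstream actually uses the sharp constant — only boundedness is needed for the existence of $\alpha_\infty$, and both your argument and the paper's supply that.
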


\begin{remark}
1. In (c), it is not true generally that $\xi_n(1)=X_n^0$. See Sec.~\ref{ssec:experiment}. 

\smallskip
2. To elucidate the formula \eqref{dxieta}, let us look at the dynamic programming formulation \eqref{recg}.
We have the 
objective function $\Lambda_j(x,y)=g_j(y)+L(y,x)$. Let 
$y^*=y^*(x)$ be some critical point. 
Then $\frac{\partial}{\partial y}\Lambda_j(x,y^*)=0$.
Hence
$g_{j+1}'(x)=
\frac{\partial}{\partial x}\Lambda_j(x,y^*)=L_2(y^*,x)
$. With our Lagrangian, and putting $j=n-1$, we get  $L_2(y^*,x)=L_2(T_{n-1}(\tau),T_{n}(\tau))=(T_{n-1}(\tau)+1)^{-1}$.
The result is the relation $d\eta_{n}/d\xi_{n}=\xi_{n-1}^{-1}$, equivalent to \eqref{dxieta}.
\end{remark}

\begin{proof}
(a) Immediate by induction.

(b) Monotonicity of $(\alpha_n)$ and $(\eta_n)$
is obvious from \eqref{recrelalxi} and \eqref{recreleta}. Since $\min\alpha_2=2$, the inequalities $\alpha_n> 2$ and $\xi_{n+1}>2\xi_n$ for $n>2$ follow. 

Therefore, for $n>2$ we have $\xi_n>2^{n-2}\xi_2\geq 2^{n-1}$. By \eqref{recrelalxi}, 
$\alpha_n-\alpha_2<\sum_{j=3}^{n-1} 2^{1-j}<1$.
Thus the sequence $(\alpha_n(t))$ is bounded and the %finite limit 
$\ali(t)<\infty$. 
%exists.

\smallskip
(c) We know from (b) that $\alpha_n\geq 2$ for $n\geq 2$. Now \eqref{recrelalxi} implies $\Xmin_{n+1}\geq 2\Xmin_n+1$, hence the strict monotonicity of $(\Xmin_n)$ and the estimate $\Xmin_n\geq 2^{n-1}$. 

\smallskip
(d)
%Comparing the initial conditions for $t=1$ and $t=2$, we see that $(\alpha_0(2),\xi_0(2),\eta_0(2))=(1,1,1)=
%(\alpha_1(1),\xi_1(1),\eta_1(1))$. Therefore by the recurrence relations \eqref{ic12}
%
Due to the identical initial conditions $(\alpha_0(2),\xi_0(2),\eta_0(2))=(1,1,1)=
(\alpha_1(1),\xi_1(1),\eta_1(1))$, 
the sequences $(\alpha_n(2))$ etc.\ are identical to 
$(\alpha_{n+1}(1))$ etc.

The relations \eqref{dxieta} follow by induction:
for $n=1$ we have $\eta'_1(t)=1=\xi'_1(t)/\xi_0(t)$; 
the induction step goes:
$$
 \eta_{n+1}'=\eta_n'+\left(\frac{\xi_{n+1}-1}{\xi_n}\right)'=\frac{\xi'_n}{\xi_{n-1}}+
\frac{\xi'_{n+1}}{\xi_n}-\frac{(\xi_{n+1}-1)\xi'_n}{\xi_n^2}=\frac{\xi'_{n+1}}{\xi_n},
$$
due to \eqref{recxi}.
%
\iffalse
\smallskip
(e) Put $\lambda_n(t)=\xi'_n(t)/\xi_n(t)$. From \eqref{recxi} we have the recurrence relation 
\begin{equation}
\label{logdifxirec}
 \lambda_{n+1}=(1-\xi_{n+1}^{-1})(2\lambda_n-\lambda_{n-1}).
\end{equation}
Hence%
\footnote{This is only true if $\lambda_j\geq 0$ up to $j=n+1$.}
 the sequence $(\lambda_n)$ is convex down for any $t\geq 1$. The same is true for the sequence
$\tilde\lambda_{n}=\lambda_{n}-n\lambda_1$. 
We have $\tilde\lambda_1=0$ and $\tilde\lambda_2(t)=2t/(1+t^2)-2/t<0$, hence by induction
for any $n\geq 2$
$$
 \frac{n-1}{n}\tilde\lambda_{n+1}\leq\tilde\lambda_n-
\frac{1}{n}\tilde\lambda_1<0.
\eqno\qedhere
$$
\fi
\end{proof}

\subsection{Experimental observations}
\label{ssec:experiment}

For $n=1,2,\dots$, denote by  $\gamma_n$, resp., $\gamma^T_n$, the parametric curve defined in the coordinate $(x,y)$-plane by the equations
$$
 x=\xi_n(t), \quad y=\eta_n(t), \quad t\geq 1,
$$
resp.,
$$
 x=T_n(\tau), \quad y=G_n(\tau), \quad \tau\geq 0.
$$
The curve $\gamma_n$ is obtained from $\gamma^T_n$
by the shift $(x,y)\mapsto(x+1,y+1)$.

Let $\gamma_n[1,2]$ be the part of the curve $\gamma_n$
%
%The curve $\gamma_n$ is the union of the curves 
%$\gamma_n[1,2]$ and $\gamma_n(2,+\infty)$
%
corresponding to the parameter values $1\leq t\leq 2$.
%and $t>2$, respectively.
%
Proposition~\ref{prop:xieta-basic}(d) asserts that 
$\gamma_{n-1}[1,2]$ and $\gamma_n[1,2]$ are geometrically adjacent to each other and have a common tangent at the adjacency point. 

Similarly defined partial curves $\gamma^T_n[0,1]$
%and $\gamma^T_n(1,+\infty)$ 
will be used with reference to the curves $\gamma^T_n$.

%We want to understand the relation between the curves $\gamma^T_n$ and the graphs of the functions $F_n(x)$. 

The lower envelope of the the graphs of the functions $F_n(x)$
%latter 
makes, by definition, the graph of the function $F(x)$.

One anticipates a relation between 
the graph of $F_n(x)$ 
and the curve
$\gamma^T_n$ 
based on the premise that the abscissa $x=T_n(\tau)$ corresponds to the value of $\tau$ that determines the extremal trajectory yielding the extremal value $y=G_n(\tau)$ of the action. We take this view as a ``first approximation''
and discuss necessary corrections below.

The curve $\gamma^T_n$ lies in the half-plane
$x\geq\Xmin_n-1$ (so that the equation $T_n(\tau)=x$ has a solution).
Consequently, $\gamma^T_n$ cannot represent the whole
of the graph of $F_n(x)$
(which is defined for all $x>0$). 
Parts of the graph will be represented by segments
of curves $\gamma^T_k$, $k<n$; we elaborate on this 
in \S~\ref{ssec:bndmin}.

\begin{figure}[htb]
\begin{picture}(260,266)
\put(0,0){\includegraphics[scale=0.5]{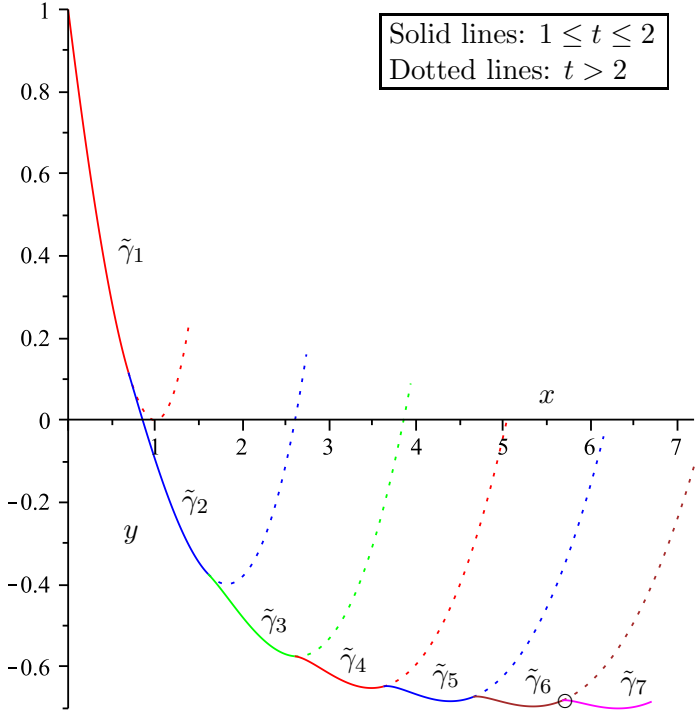}
}
% Origing: file xieta.mw
%\put(215,120){\small $x$}
%\put(44,65){\small $y$}
%\put(56,175){\small $\tgamma_1$}
%\put(80,80){\small $\tgamma_2$}
%\put(110,35){\small $\tgamma_3$}
%\put(140,20){\small $\tgamma_4$}
%\put(175,14){\small $\tgamma_5$}
%\put(220,14){\small $\tgamma_6$}
%\put(155,250){\framebox{\parbox{3.6cm}{\small {Solid lines: $1\leq t\leq 2$}\\
%{Dotted lines: $t> 2$}}}}
\put(201,116){\small $x$}
\put(44,65){\small $y$}
\put(42,171){\small $\tgamma_1$}
\put(66,76){\small $\tgamma_2$}
\put(96,31){\small $\tgamma_3$}
\put(126,16){\small $\tgamma_4$}
\put(161,10){\small $\tgamma_5$}
\put(196,8){\small $\tgamma_6$}
\put(232,7.4){\small $\tgamma_7$}
\put(141,246){\framebox{\parbox{3.6cm}{\small {Solid lines: $1\leq t\leq 2$}\\
{Dotted lines: $t> 2$}}}}
\put(211,3){\circle{5}}
\end{picture}
\caption{Curves $\tgamma_n$, $n=1,\dots,7$,
with parametric equations $x=\log\xi_n(t)$, $y=\eta_n(t)-e x$. Inside the little circle is a region displayed in Fig.~\ref{fig:xieta6-7}.} 
\label{fig:xieta1-6}
\end{figure}

The mutual position of the curves $\gamma_n$ (or $\gamma^T_n$) with $1\leq n\leq 7$ is illustrated in Figure \ref{fig:xieta1-6}.  
%in the $x$-logarithmic scale.
As a matter of fact, shown are the curves $\tgamma_n$ obtained from $\gamma_n$ by the transformation $(x,y)\mapsto (\log x,y-e\log x)$ (scaling conveniently and removing the asymptotic 
%linear 
drift).
%The parts depicted by solid lines correspond to the parameter values $1\leq t\leq 2$ and dotted parts correspond to some subintervals $2<t<t^{(\rm max)}_n$.
The %envelope of the solid curves, i.e.\ the 
union of the solid parts forms a part of the graph of the function $F(e^{x}-1)+1-ex$. %=\min_n F_n(x)$.

%One is tempted to claim 
Figure~\ref{fig:xieta1-6} seems to support the view that: (a) the parametric curves $\gamma^T_n$ are the graphs of the functions %$F_n(e^x-1)+1-e x$ 
$F_n(x)$
restricted to $x\geq \Xmin_n-1$; 
(b) moreover, $\Xmin_n=\xi_n(1)$; and (c) the lower envelope of the curves 
$\gamma_n$
%({\em mutatis mutandis}, the curves $\dgamma_n$ in Fig.~\ref{fig:xieta1-6}),
%which is the graph of the function $f(x-1)+1=1+\min_n f_n(x-1)$, 
coincides with union of the segments $\gamma_n[1,2]$. 

\begin{figure}[ht]
\begin{picture}(260,255)
\put(0,0){\includegraphics[scale=0.5]{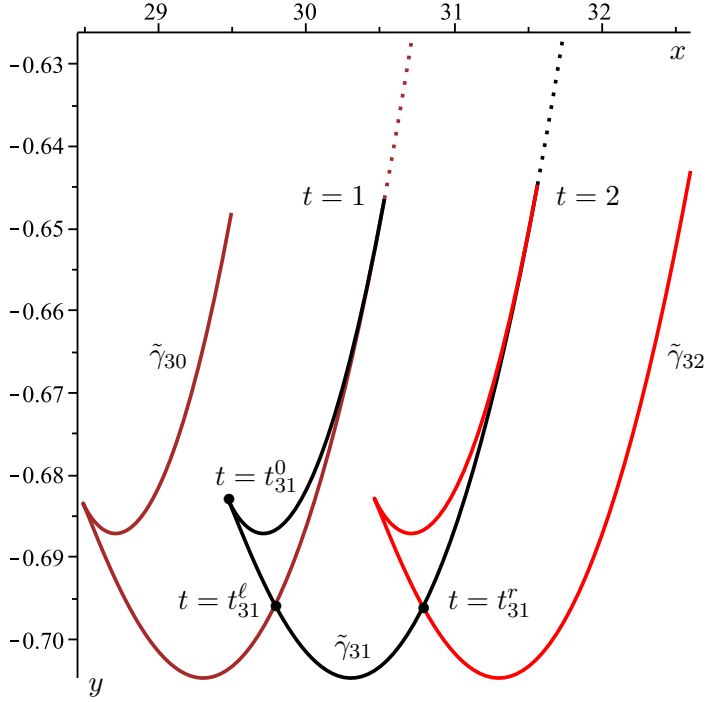}
}
% Origing: file xieta.mw
%\put(265,240){\small $x$}
%\put(44,1){\small $y$}
%\put(67,125){\small $\tgamma_{30}$}
%\put(137,15){\small $\tgamma_{31}$}
%\put(263,125){\small $\tgamma_{32}$}
%\put(125,185){\small $t=1$}
%\put(221,185){\small $t=2$}
%\put(97.7,73.7){\circle*{4}}
%\put(92,79){\small $t=\Tmin_{31}$}
%\put(115.5,33){\circle*{4}}
%\put(78,32){\small $t=\Tleft_{31}$}
%\put(171.47,32.41){\circle*{4}}
%\put(180,32){\small $t=\Tright_{31}$}
%
\put(250,235){\small $x$}
\put(30,-4){\small $y$}
\put(53,120){\small $\tgamma_{30}$}
\put(123,10){\small $\tgamma_{31}$}
\put(249,120){\small $\tgamma_{32}$}
\put(111,180){\small $t=1$}
\put(207,180){\small $t=2$}
\put(83.2,68.7){\circle*{4}}
\put(78,74){\small $t=\Tmin_{31}$}
\put(101,28){\circle*{4}}
\put(64,27){\small $t=\Tleft_{31}$}
\put(156.9,27.41){\circle*{4}}
\put(166,27){\small $t=\Tright_{31}$}
\end{picture}
\caption{Parametric curves $\tgamma_n$, $n=30,31,32$.
The coordinates and legend are the same as in Fig.~\ref{fig:xieta1-6}. The $t$-value marks pertain to $\tgamma_{31}$.} 
\label{fig:xieta30-32}
\end{figure}

These impressions 
%turn out to be deceptive. They 
are refuted by observing the curves with greater values of $n$.
Figure~\ref{fig:xieta30-32} shows that:
(a) the curve $\tgamma_n$, and hence $\gamma_n$ or $\gamma^T_n$, in general is not a graph of a single-valued function;
(b) the value $\Xmin_n$ is the abscissa of the cusp
and corresponds to some $\Tmin_n\in (1,2)$;
(c) the lower envelope of the
curves $\gamma_n$ 
is a proper subset of the
%does not coincide with 
union of the segments $\gamma_n[1,2]$.

\begin{figure}[thb]
\begin{picture}(257,255)
\put(0,0){\includegraphics[scale=0.5]{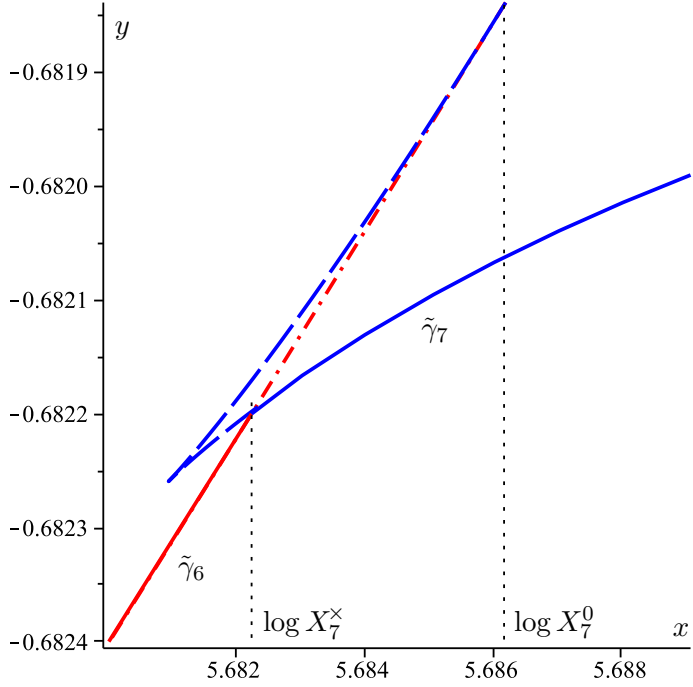}
}
% Origing: file xieta.mw
%\put(265,32){\small $x$}
%\put(54,260){\small $y$}
%\put(78,55){\small $\tgamma_6$}
%\put(170,145){\small $\tgamma_7$}
%\put(110,34){\small $\log\Xcross_7$}
%\put(205,34){\small $\log\Xmin_7$}
%
\put(251,16){\small $x$}
\put(40,244){\small $y$}
\put(64,39){\small $\tgamma_6$}
\put(156,129){\small $\tgamma_7$}
\put(96,18){\small $\log\Xcross_7$}
\put(191,18){\small $\log\Xmin_7$}
\end{picture}
\caption{%An enlarged 
A magnified
view of the curves
$\tgamma_6$ in parameter region $t\to 2^-$ and $\tgamma_7$
in parameter region $t\to 1^+$. Solid lines form a part of the graph of
$F(x)$.
} 
\label{fig:xieta6-7}
\end{figure}

Let us describe what {\em is}\ true and will be proved in the sequel (Sec.~\ref{ssec:special-points}).

\smallskip
(i) Case $n\leq 6$. The observations based on Fig.~\ref{fig:xieta1-6} are mostly adequate (with subtle exception described in (iii) below), the crucial fact being that the functions $\xi_n(t)$ and $\eta_n(t)$ are monotone increasing in $t\in[1,\infty)$.
%; the curve
%$\gamma^T_n$ is the graph of the function $f_n(x)$
%restricted to $x\geq \Xmin_n-1$, and %$\Xmin_n=\xi_n(1)$.
%As follows from Proposition~\ref{prop:xieta-basic},
At $x=T_n(0)=T_{n-1}(1)=\Xmin_n-1$, we have $F(x)=G_n(0)=G_{n-1}(1)=\eta_n(1)-1$; 
the function $F(\cdot)$ has a continuous derivative at that point. 

\smallskip
(ii) Case $n\geq 7$. There exist three special values of the parameter: $1<\Tmin_n<\Tleft_n<\Tright_n<2$.
The functions $\xi_n(t)$ and $\eta_n(t)$ are convex in $t\in[1,\infty)$;
they decrease in $[1,\Tmin_n]$ and increase in $[\Tmin_n,\infty)$.
The value $\Xmin_n=\min_t \xi_n(t)$ equals $\xi_n(\Tmin_n)$.
The curve $\gamma_n$ has a cusp at the point
$(\Xmin_n,\eta_n(\Tmin))$.
The lower branches of the curves $\gamma_n$ and
$\gamma_{n-1}$ meet at the point with coordinates
$(\xi_n(\Tleft_n),\eta_n(\Tleft_n))=
(\xi_{n-1}(\Tright_{n-1}),\eta_{n-1}(\Tright_{n-1}))$.
Set by definition $\Xcross_n=\xi_n(\Tleft_n)$.
So $\Xcross_{n+1}=\xi_n(\Tright_n)$.

The branch of $\gamma_n$ corresponding to parameter values $t\geq\Tleft_n$ is the graph of the function $F_n(x-1)+1$ restricted to $x\geq \Xcross_n$.
(As to why $\Xcross_n$ and not $\Xmin_n$ here -- see the end of \S~\ref{ssec:bndmin}.)
The point $(T_n(0),G_n(0))$ does not belong to the lower branch of the curve $\gamma_n$.
The equality
$F(x)=F_n(x)$ is valid for $x\in[\Xcross_n,\Xcross_{n+1}]$.
 The left and right derivative numbers of $f(x)$
at $x=\Xcross_n$ are not equal to each other. 

\smallskip
(iii) The exception mentioned in (a) applies to 
the case $n=6$.
Looking at Fig.~\ref{fig:xieta1-6},
one is led to mistakenly believe that the equality
$F(x)=F_6(x)$ holds true for $\xi_6(1)\leq x\leq \xi_6(2)$,
while in fact it is valid in the shorter interval
$\xi_6(1)\leq x\leq\Xcross_7$, see Figure~\ref{fig:xieta6-7}. Let us clarify that
the value $n=7$ is the first for which $\Xcross_n$ is defined,
since $\gamma_7$ is the first curve having a cusp.
The value $\Tleft_6$ is not defined, but $\Tright_6\approx 1.9975$
is defined and 
$\Xcross_7=\xi_7(\Tleft_7)=\xi_6(\Tright_6)$.

\subsection{Case of a minimizer at the boundary}
\label{ssec:bndmin}

We will show that if the minimum of the objective function
$\hat S_n(\mathbf{t})$ is attained at the boundary of the admissible region, then the minimizer corresponds to 
an extremal trajectory of a shorter length.

%Suppose $x<\Xmin_n=\min\xi_n(t)$, so that the equation $\xi_n(t)=x$ does not have a solution.
%This means that the function $S_n(0,t_1,\dots,t_{n-1},x)$ does not have a critical point on the octant $t_i>0$, $i=1,\dots,n-1$. Therefore, if $\mathbf{t^*}$ is the minimizer (it exists by Proposition~\ref{prop:monotfn}(d)), then
%$t^*_j=0$ for at least one $j\in\{1,\dots,n-1\}$.

\begin{lemma}
\label{lem:min-at-bnd}
(a) Suppose $\mathbf{t^*}=(0,t^*_1,\dots,t^*_{n-1},x)$ is a minimizer for the problem \eqref{optfn}.
If $t^*_j=0$ for some $j\in\{1,\dots,n-1\}$, then
$t^*_i=0$ for any $i\in\{1,\dots,j-1\}$.
In this case $F_n(x)=F_{n-j}(x)$.

(b) If, %at the same time, 
in addition, 
$t^*_{j+1}\neq 0$,
then the truncated vector
$$\hat{\mathbf{t}}^*=(0=t_{j}^*,t_{j+1}^*,\dots,t_{n-1}^*,x)
$$
is an extremal trajectory for the problem \eqref{optfn}
with $n$ replaced by $n-j$. In this case $T_{n-j}(t^*_{j+1})=x$ and $G_{n-j}(t^*_{j+1})=F_n(x)$.
\end{lemma}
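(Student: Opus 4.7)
For part~(a), my plan is to compare the minimizer $\mathbf t^*$ with the ``zero-prefix'' substitute $\tilde{\mathbf t}=(0,0,\dots,0,\,t^*_{j+1},\dots,t^*_{n-1},x)$ in which the first $j$ coordinates after $t_0$ are replaced by zero. Since $t^*_j=0=\tilde t_j$ and the tails agree, the terms of $\hat S_n$ with index greater than $j$ contribute equally to $\hat S_n(\mathbf t^*)$ and $\hat S_n(\tilde{\mathbf t})$; the $j$-th term $t_j/(t_{j-1}+1)$ vanishes in both (zero numerator); and the remaining first $j-1$ terms of $\hat S_n(\tilde{\mathbf t})$ are identically zero. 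Thus
\[
 \hat S_n(\mathbf t^*) - \hat S_n(\tilde{\mathbf t}) = \sum_{i=1}^{j-1}\frac{t^*_i}{t^*_{i-1}+1}\ge 0.
\]
Minimality of $\mathbf t^*$ forces this difference to be zero, and since each summand is non-negative, an induction starting from $t^*_1\ge 0$ yields $t^*_i=0$ for every $i<j$. Once that is established, $\hat S_n(\mathbf t^*)$ literally equals $\hat S_{n-j}(0,t^*_{j+1},\dots,t^*_{n-1},x)$, giving $F_n(x)\ge F_{n-j}(x)$; the reverse inequality is the monotonicity already proved in Proposition~\ref{prop:monotfn}(b).

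For part~(b), I first observe that the hypothesis $t^*_{j+1}\ne 0$ combined with part~(a) implies that every component $t^*_i$, $i\in\{j+1,\dots,n-1\}$, is strictly positive: if some later $t^*_k=0$ existed, part~(a) applied with that $k$ would force $t^*_{j+1}=0$, a contradiction. Hence the truncated vector $\hat{\mathbf t}^*=(0,t^*_{j+1},\dots,t^*_{n-1},x)$ has all internal coordinates strictly positive. Because $\mathbf t^*$ is a minimizer of $\hat S_n$ and $t^*_m>0$ places $t^*_m$ in the interior of the admissible region for each $m\in\{j+1,\dots,n-1\}$, the Euler--Lagrange equation $\partial\hat S_n/\partial t_m=0$ holds there; each such equation involves only the three consecutive components $(t^*_{m-1},t^*_m,t^*_{m+1})$, so it coincides term-for-term with the Euler--Lagrange equation for index $m-j$ of the shorter problem. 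Consequently $\hat{\mathbf t}^*$ satisfies the full recurrence \eqref{ELeq-recur} and is an extremal trajectory.

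It remains to identify the parameter $\tau$. Setting $\tau=t^*_{j+1}$, the initial data $T_0(\tau)=0$, $T_1(\tau)=\tau$ match $(\hat t^*_0,\hat t^*_1)$, and the recurrence \eqref{defTj} then inductively yields $T_m(\tau)=t^*_{j+m}$ for $m=0,\dots,n-j$; in particular $T_{n-j}(\tau)=x$. Summing the Lagrangian along this trajectory via \eqref{Gn-rec} gives $G_{n-j}(\tau)=\sum_{m=j+1}^{n}t^*_m/(t^*_{m-1}+1)$, which equals $\hat S_n(\mathbf t^*)=F_n(x)$ because the first $j$ terms vanish by part~(a). No single step is genuinely difficult; the one point requiring care is verifying that ``restriction to a sub-trajectory'' commutes with ``extremality,'' which hinges on the locality (three-point nature) of the Euler--Lagrange recurrence.
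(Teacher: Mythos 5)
Your proof is correct, and both parts work. In part~(a) you take a genuinely different (and arguably cleaner) route: you compare the minimizer directly with the zero-prefix substitute $\tilde{\mathbf t}$ and observe that the discarded terms $\sum_{i<j}t^*_i/(t^*_{i-1}+1)$ are nonnegative, so minimality forces each to vanish, which collapses $t^*_1,\dots,t^*_{j-1}$ to zero by forward induction. The paper instead applies the first-order necessary condition $\partial\hat S_n/\partial t_i=0$ at $i=j-1$ and notes that, because $t^*_j=0$, the derivative reduces to $1/(t^*_{i-1}+1)>0$, giving an immediate contradiction; this then propagates downward. Both arguments are short; yours avoids any reference to differentiability and is a pure comparison argument, while the paper's is the Euler--Lagrange route that will be reused throughout the section. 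For part~(b) your argument is essentially the paper's, except that you explicitly \emph{prove} the positivity $t^*_i>0$ for $i\in\{j+1,\dots,n-1\}$ by reapplying part~(a) (the paper asserts this without comment), and you spell out the index-shift identification of the truncated Euler--Lagrange recurrence with $T_m$, $G_m$. That extra care is welcome and the conclusion $T_{n-j}(t^*_{j+1})=x$, $G_{n-j}(t^*_{j+1})=F_n(x)$ follows exactly as in the paper.
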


%---------------------------------------------------------------
\iffalse
\begin{prop}
\label{prop:fnstab}
For every $n\geq 1$ there exists $X_n$ such that
$f_{n+1}(x)=f_{n}(x)$ for $0\leq x\leq X_n$.
For instance, $X_1=1$.
\end{prop}

\begin{proof}
Suppose the $n$-tuple $(t_1,\dots,t_{n}$ 
minimizes $S_{n-1}(t_1,\dots,t_n)$ under constraint
$t_n=x$. If 
$f_{n+1}(x)<f_{n}(x)$, then 
there exists an $(n+1)$-tuple $t'_0,\dots,t'_{n-1},t_n$ with $t'_n=x$ such that
$$
S_n(t'_0,\dots,t'_{n-1},x)<f_{n-1}(x)=S_{n-1}(t_1,\dots,t_n).
$$

We need to show that for some $X_n$
and any $t_1,\dots,t_n$ with $t_n\leq X_n$ the inequality
$$
 S_n(t_0,\dots,t_n)\leq S_{n-1}(t_1,\dots,t_n)
$$
implies $t_0=0$.
\end{proof}
\fi
%---------------------------------------------------------------

\begin{proof}
(a) Put $i=j-1$. If $i>0$ and $t^*_i>0$, then the necessary condition of
extremum $\partial \hat S_n/\partial t_i|_{\mathbf{t}=\mathbf{t}^*}=0$
yields
$$
\frac{1}{t_{i-1}^*+1}-\frac{t_{j}^*}{(t_{i}^*+1)^2}=0,
$$ 
which contradicts the assumption $t_j^*=0$. Hence $t_{j-1}^*=0$. By downward induction on $i$ we get $t_i=0$ for any $i<j$. 

Since
$$
 \hat S_n(0,\dots,0,t_{j+1},\dots,x)=S_{n-j}(0,t_{j+1},\dots,x),
$$
we have $F_n(x)=\hat S_{n-j}(\hat{\mathbf{t}}^*)\geq F_{n-j}(x)$. It follows that $F_n(x)=F_{n-j}(x)$.

\smallskip
(b) For the same reason, $\hat{\mathbf{t}}^*$ is the minimizer for
\eqref{optfn}
with $n$ replaced by $n-j$. Moreover,
$t^*_i>0$ for $i=j+1,\dots,n$. Hence $\hat{\mathbf{t}}^*$ is an extremal trajectory of length $n+1-j$ and $T_{n-j}(t_{j+1}^*)=x$.
%
%$t=t^*_{j+1}$ is a root of the equation
%$T_{n-j}(t)=x$.
Finally, $\hat S_{n-j}(\hat{\mathbf{t}}^*)=G_{n-j}(t^*_{j+1})$,
hence $F_n(x)=G_{n-j}(t^*_{j+1})$.
\end{proof}

We see that a minimizer that lies on the boundary of
the admissible domain is represented by an extremal trajectory of length $k\in\{2,\dots,n\}$. 
Consequently, the graph of $F_n(x)$ 
is the lower envelope of the parametric curves 
$\gamma^T_k$ with $1\leq k\leq n-1$.

For $n\leq 6$ and $1\leq k\leq n-1$, the graph of the restriction $F_n(x)|_{[T_k(0),T_k(1)]}$ coinsides with
$\gamma^T_k[1,2]$.

For $n\geq 7$, the part of the curve $\gamma^T_n$ corresponding to parameter values $\Tmin_n<t<\Tleft_n$
lies above the curve 
$\gamma^T_{n+1}$ and hence does not belong to the
graph of $F_n(x)$. 

%===========================================
%===========================================

\subsection{Monotonicity of the critical index}
\label{ssec:crind}

\begin{definition}
\label{def:crind}
For the given $x>0$, the {\em critical index} $\nu(x)$ is the integer equal to minimum value of $n$ such that $F(x)=F_{n}(x)$.
\end{definition}

\begin{prop}
\label{prop:crind}
The function $x\mapsto \nu(x)$ is nondecreasing.
\end{prop}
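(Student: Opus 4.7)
The plan is to establish a recursion $\nu(x) = 1 + \nu(y^*(x))$ for $x > 1$, where $y^*(x)$ is an appropriate minimizer in the functional equation \eqref{fe1} of Proposition~\ref{prop:funeqf}, together with monotonicity $y^*(x_1) \leq y^*(x_2)$ for $x_1 \leq x_2$; the Proposition then follows by a strong induction on $\nu(x_2)$.

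First I would note that each $F_n$ is concave on $(0,\infty)$: its DP recurrence $F_n(x) = \inf_y(F_{n-1}(y) + x/(y+1))$ realizes $F_n$ as a pointwise infimum of affine functions of $x$, and hence $F = \inf_n F_n$ is concave as well. For each $x > 1$, Proposition~\ref{prop:funeqf} provides a minimizer $y^*(x) \in (0, x-1)$ with $F(x) = F(y^*(x)) + x/(y^*(x)+1)$. Monotonicity of $y^*$ then follows from the standard monotone-comparative-statics manipulation: adding the two optimality inequalities for $x_1 < x_2$ yields $(x_2-x_1)\bigl[(y^*(x_2)+1)^{-1} - (y^*(x_1)+1)^{-1}\bigr] \leq 0$, hence $y^*(x_1) \leq y^*(x_2)$.

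The main obstacle is the recursion $\nu(x) = 1 + \nu(y^*(x))$. The upper bound $\leq$ comes from the DP inequality $F_{k+1}(x) \leq F_k(y) + x/(y+1)$ with $k = \nu(y^*(x))$ and $y = y^*(x)$:
\[
F_{\nu(y^*(x))+1}(x) \leq F(y^*(x)) + \frac{x}{y^*(x)+1} = F(x).
\]
For the lower bound I would take the DP minimizer $\tilde y$ realizing $F_{\nu(x)}(x) = F_{\nu(x)-1}(\tilde y) + x/(\tilde y+1)$; the chain
\[
F(x) = F_{\nu(x)-1}(\tilde y) + \frac{x}{\tilde y+1} \geq F(\tilde y) + \frac{x}{\tilde y+1} \geq F(x)
\]
must collapse to equalities, forcing $F_{\nu(x)-1}(\tilde y) = F(\tilde y)$ (so $\nu(\tilde y) \leq \nu(x)-1$) and exhibiting $\tilde y$ as a valid choice of $y^*(x)$. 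A strict inequality $\nu(\tilde y) < \nu(x)-1$ would give $F_{\nu(x)-1}(x) \leq F_{\nu(x)-2}(\tilde y) + x/(\tilde y+1) = F(x)$, contradicting the minimality in the definition of $\nu(x)$. A small bookkeeping issue -- reconciling the monotone selection of $y^*$ with the trajectory-derived $\tilde y$ -- can be handled by taking, e.g., the smallest minimizer consistently.

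With these two ingredients the Proposition follows by strong induction on $\nu(x_2)$. The base case $\nu(x_2) = 1$ forces $x_2 \leq 1$ (for $x_2 > 1$, a positive $y^*(x_2)$ gives $F(x_2) < x_2 = F_1(x_2)$), hence $x_1 \leq 1$ and $\nu(x_1) = 1$. For the inductive step with $\nu(x_2) = k \geq 2$: either $x_1 \leq 1$ and $\nu(x_1) = 1 \leq k$, or $x_1 > 1$, in which case the induction hypothesis applied to the pair $y^*(x_1) \leq y^*(x_2)$ (with $\nu(y^*(x_2)) = k-1 < k$) yields $\nu(y^*(x_1)) \leq k-1$, so $\nu(x_1) = 1 + \nu(y^*(x_1)) \leq k = \nu(x_2)$.
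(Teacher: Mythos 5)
Your proposal is correct in substance but takes a genuinely different route from the paper. The paper works directly with the extremal-trajectory formalism of Section~\ref{ssec:leastaction}: it characterizes $n=\nu(x)$ via the shooting parametrization ($T_n(\tau)=x$, $G_n(\tau)=F(x)$), observes that along an extremal trajectory $\nu(t_j)=j$ for every intermediate node, and then derives a contradiction \emph{ad absurdum} from the assumption $x<x'$ and $\nu(x)>\nu(x')$ by an explicit comparison of the actions $\hat S_n$ and $\hat S_k$ evaluated on the two trajectories shifted to a common endpoint. Your argument instead stays entirely at the level of the Bellman functional equation: you isolate (a) a one-step recursion $\nu(x)=1+\nu(\tilde y)$ for a suitable minimizer $\tilde y$ of $F(x)=\min_y\bigl(F(y)+x/(y+1)\bigr)$, and (b) the monotone-comparative-statics inequality (adding the two optimality inequalities) showing that any minimizer for $x_1$ lies weakly below any minimizer for $x_2$ when $x_1<x_2$; the strong induction on $\nu(x_2)$ then closes the argument. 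What your version buys is abstraction and generality --- (b) is a standard supermodularity/MCS observation that works for any Lagrangian $L(y,x)$ with $\partial^2 L/\partial y\,\partial x\le 0$, and (a) is pure DP bookkeeping, with no appeal to the $T_n,G_n$ machinery; what it costs is the slightly delicate bookkeeping you already flag: there may be several minimizers, and the one witnessing $\nu(\tilde y)=\nu(x)-1$ need not be the smallest. The cleanest patch (consistent with your own ``smallest minimizer'' suggestion) is to apply the comparative-statics inequality between an \emph{arbitrary} minimizer for $x_1$ and the \emph{specific} $\tilde y(x_2)$ with $\nu(\tilde y(x_2))=k-1$; the strong inductive hypothesis, applied to the pair $(y_1,\tilde y(x_2))$, then yields $\nu(y_1)\le k-1$, and $\nu(x_1)\le 1+\nu(y_1)\le k$. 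A minor stylistic remark: the concavity observation in your first paragraph is never actually used --- the MCS inequality needs only the two optimality inequalities, not concavity of $F$.
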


\begin{proof}
By the definition of extremal trajectories and in view ofLemma~\ref{lem:min-at-bnd}, the equality $n=\nu(x)$ holds if and only if both of the following
are true:

\smallskip
(i) there exists $\tau>0$ 
 such that $T_n(\tau)=x$ and $G_n(\tau)=F(x)$;
 equivalently, the vector $\mathbf{t}\in\RR^{n+1}$ with 
 $t_0=0$ and $t_{j}=T_j(\tau)$
($j=1,\dots,n)$ is a minimizer for the problem \eqref{optfn};

\smallskip
(ii) for any $k<n$ either the equation $T_{k}(\tau)=x$ does not have a solution or, if 
$\tau$ is a solution, then $G_{k}(\tau)>F(x)$.  

\smallskip
Consequently, if $\mathbf{t}$ is an extremal trajectory specified in (i), then $\nu(t_j)=j$ ($1\leq j\leq n$).  

Using this observation, we will show  by induction on $n$ that the inequality $n=\nu(x)>\nu(x')$ implies $x>x'$.

Let us assume that the said implication is true with $n-1$ instead of $n$. (The special case $n-1=1$ is included.) 

Suppose that the induction step fails.
It means that there exist $x$ and $x'$ such that $x<x'$ and $n=\nu(x)>\nu(x')=k$.

Let $\mathbf{t}=(0,t_1,\dots,t_{n-1},x)$ and
$\mathbf{t'}=(0,t'_1,\dots,t'_{k-1},x')$
be the extremal trajectories with $G_n(t_1)=F(x)$
and $G_k(t'_1)=F(x')$.
Then $\nu(t_{n-1})=n-1$ and $\nu(t'_{k-1})=k-1$. By the inductive assumption, $t'_{k-1}<t_{n-1}$.
(If $n=2$, then this is a tautology: $0=t'_0<t_1$.)
We have
$$
 F(x')=S_k(\mathbf{t'})=\hat S_k(0,t'_1,\dots,t'_{k-1},x)
+\frac{x'-x}{1+t'_{k-1}}.
$$
By the assumption of {\em ad absurdum}\ argument, $k<n$, so
$$
\hat S_k(0,t'_1,\dots,t'_{k-1},x)\geq F_k(x)>F(x)
=\hat S_n(0,t_1,\dots,t_{n-1},x).
$$
Due to the inequality $t'_{k-1}<t_{n-1}$, 
$$
 \frac{x'-x}{1+t'_{k-1}}>\frac{x'-x}{1+t_{n-1}}.
$$
Therefore
$$
 F(x')> \hat S_n(0,t_1,\dots,t_{n-1},x)
+\frac{x'-x}{1+t_{n-1}}=\hat S_n(0,t_1,\dots,t_{n-1},x'),
$$
which contradicts the assumption that $\mathbf{t}$
is a minimizer for $F(x')$.
\end{proof}

\section{Reduction of Theorem~\ref{thm:main} to  Theorem~\ref{thm:paramcurves}}
\label{sec:proofmain}

In the course of the proof, which involves many small technical steps, we will prove that Fig.~\ref{fig:xieta30-32} adequately illustrates the relevant features of the curves $\gamma_n$ 
(defined in Sec.~\ref{sec:minimizers})
with large enough $n$.

We will explore in great detail a parametrization of the curves $\gamma_n$. As a result, we will be able to describe
a piece-wise parametrization of their lower envelope
and to derive the asymptotics of the function $f(x)$.

%The visual picture, supported by experimental evidence, has been outlined above.  

\subsection{Only the partial curves \texorpdfstring{$\gamma_n[1,2]$}{gamma[1,2]} are relevant}
\label{ssec:gamma12}

Observing the dotted lines in Figs.~\ref{fig:xieta1-6} and \ref{fig:xieta30-32},
one is led to conjecture that the parts of the curves $\gamma_n$ %(2,+\infty)$
corresponding to the parameter values $t>2$
do not contribute to the lower envelope of the
curves $\gamma_n$. Equivalently, 
the parts of the curves $\gamma^T_n$ %(1,+\infty)$ 
corresponding to the parameter values $\tau>1$
do not contribute to the graph of the function $F(x)$.
We will prove this conjecture now.

As shown in \S~\ref{ssec:bndmin}, the coordinates of any point of the graph of $F(x)$ can be written as
$x=T_n(\tau)$, $F(x)=G_n(\tau)$ with some $n\geq 1$
and $\tau>0$. 

\begin{prop}
\label{prop:tau01}
Given $x>0$, suppose that $n$ and $\tau$ are such that $x=T_n(\tau)$ and $F(x)=F_n(x)=G_n(\tau)$. Then $\tau\leq 1$.
\end{prop}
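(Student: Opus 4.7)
The plan is to argue by contradiction. Suppose $\tau>1$; the strategy is to construct a virtual trajectory of length $n+2$ whose action is strictly smaller than $\hat S_n(\mathbf{t})$, where $\mathbf{t}=(0,T_1(\tau),\ldots,T_n(\tau))$. By Proposition~\ref{prop:monotfn}(b) the sequence $(F_k(x))$ is non-increasing, while $F(x)=\inf_k F_k(x)\leq F_{n+1}(x)$, so the hypothesis $F(x)=F_n(x)$ forces $F_{n+1}(x)=F(x)$; any strict decrease of the action by some admissible length-$(n+2)$ trajectory therefore gives a contradiction.

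The natural candidate is obtained by prepending one extra node: set $\mathbf{t}'\eqbydef(0,\, s,\, T_1(\tau),\, T_2(\tau),\, \ldots,\, T_n(\tau))$ with a parameter $s\geq 0$ to be chosen. Since every Lagrangian term from the third summand onwards in $\hat S_{n+1}(\mathbf{t}')$ coincides with the corresponding term of $\hat S_n(\mathbf{t})$ from the second summand onwards, the action difference collapses to
$$
\hat S_{n+1}(\mathbf{t}') - \hat S_n(\mathbf{t}) \;=\; s + \frac{\tau}{s+1} - \tau.
$$
Minimising the right-hand side in $s\geq 0$ is elementary: the critical point is $s^*=\sqrt{\tau}-1$, which is admissible precisely when $\tau\geq 1$, and the minimum value is $2\sqrt{\tau}-1-\tau = -(\sqrt{\tau}-1)^2$. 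For $\tau>1$ this is strictly negative, so $\hat S_{n+1}(\mathbf{t}')<\hat S_n(\mathbf{t})=F_n(x)=F(x)$, yielding $F_{n+1}(x)<F(x)$, the contradiction sought.

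I do not anticipate any serious obstacle: the components $T_j(\tau)$ with $j\geq 1$ are automatically positive whenever $\tau>0$ (immediate from the recurrence \eqref{ELeq-recur} specialised to $V(u,p)=p(u+1)^2$ for our Lagrangian), so $\mathbf{t}'$ is an admissible trajectory for the problem defining $F_{n+1}(x)$. The borderline case $\tau=1$ gives $s^*=0$ with zero gain, which is not accidental but rather the trajectory-level manifestation of the gluing identities $\xi_{n+1}(1)=\xi_n(2)$ and $\eta_{n+1}(1)=\eta_n(2)$ from Proposition~\ref{prop:xieta-basic}(d); the whole argument thus reduces to the one-line AM--GM type calculation above, combined with the monotonicity of $(F_k(x))$.
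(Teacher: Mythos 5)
Your proposal is correct and takes essentially the same approach as the paper: both prepend a single node $s$ to the extremal trajectory and observe that the action decreases when $\tau>1$. The only cosmetic difference is that the paper simply picks any $s\in(0,\tau-1)$ to get $\hat S_{n+1}(\mathbf t')<\hat S_n(\mathbf t^*)$, while you compute the optimal $s^*=\sqrt\tau-1$ and the exact gain $-(\sqrt\tau-1)^2$; the logic and the contradiction are identical.
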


\begin{proof}
We have
$$
 F_n(x)=\hat S_n(0,t^*_1,\dots,t^*_n),
$$
where $t^*_1=T_1(\tau)=\tau$ and $t^*_n=T_n(\tau)=x$.

Suppose, contrary to what is claimed, that $\tau>1$.
Define a vector $\mathbf{t}=(0,t_1,\dots,t_{n+1})
\in\RR_+^{n+2}$ as follows:
$$
 t_1=s, \quad\text{and}\quad
 t_{j+1}=t^*_j \;\; (j=1,\dots,n),
$$
where $s$ is an arbitrary number such that $0<s<\tau-1$.
Then %(cf.~\eqref{optfn})
$$
 \hat S_n(\mathbf{t^*})-\hat S_{n+1}(\mathbf{t})
=t^*_1-\left(t_1+\frac{t_2}{t_1+1}\right)=
\tau-\left(s+\frac{\tau}{s+1}\right)=\frac{s(\tau-1-s)}{s+1}>0,
$$
so $F(x)\leq F_{n+1}(x)\leq \hat S_{n+1}(\mathbf{t})<\hat S_n(\mathbf{t^*})=F_n(x)$, a contradiction. 
\end{proof}

\subsection{Convexity of the functions \texorpdfstring{$\alpha_n(t)$ and $\alpha_\infty(t)$,
$t\in[1,2]$}{alpha[n] on [1,2]}}
\label{ssec:alpha}

Recall that the functions $\alpha_j(t)$ is defined as
$\alpha_j(t)=\xi_j(t)/\xi_{j-1}(t)$.
They are a part of the recurrent scheme \eqref{recrelalxi} defining extremal trajectories.
The function $\alpha_\infty(t)=\uparrow\lim_{j\to\infty}\alpha_j(t)$ determines the eventual rate of the exponential 
growth of the components of an extremal trajectory
starting at $\tau=t-1$. The graph of the function
$\alpha_\infty(t)$ on the interval $[1,2]$ is shown in
Fig.~\ref{fig:alphaplot}. Crusial for the derivation of the asymptotic formulas is the existence of the solution
of the equation $\alpha_\infty(t)=e$ in $[1,2]$.
The roots are marked $t_a$ and $t_b$ on the figure.

\begin{prop}
\label{prop:alpha}
The function $\alpha_\infty(t)$ is real-analytic in $[1,2]$, convex, and has one point of minimum at $t_o\in(1,2)$.
The inequality
$$
\alpha_\infty(t_o)<e<\alpha_\infty(1)=\alpha_\infty(2)
$$
holds. 
Consequently, there are uniquely determined $t_a$, $t_b$ with $1<t_a<t_o<t_b<2$ and $\alpha_\infty(t_a)=\alpha_\infty(t_b)=e$.

Also the functions $\alpha_n(t)$ are convex.
\end{prop}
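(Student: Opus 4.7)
The plan is to treat $\ali(t)$ as the uniform limit, on a suitable complex neighborhood of $[1,2]$, of the rational functions $\alpha_n(t)$, and then prove convexity together with the explicit comparison with $e$ by combining contour-integral bounds on high derivatives with a provably-rigorous numerical computation of $\alpha_N$ for one sufficiently large $N$. Real-analyticity on $[1,2]$ will be essentially automatic once uniform convergence in the complex neighborhood is in hand.

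First I would establish uniform complex bounds. The functions $\alpha_n,\xi_n$ are rational in $t$, and all real poles are at $t=0$ (inductive verification from the recurrence $\xi_{n+1}=\alpha_n\xi_n+1$, $\alpha_{n+1}=\alpha_n+\xi_n^{-1}$). Fix a thin complex strip $U=\{z\in\mathbb{C}\mid\mathrm{Re}\,z\in[1-\delta,2+\delta],\,|\mathrm{Im}\,z|\le\delta\}$ with $\delta$ small, so that no poles of any $\alpha_n$ lie in $\overline U$. I would then prove by induction that there are constants $c,\rho>1$ (with $\rho$ close to~$2$) such that $|\xi_n(z)|\ge c\rho^{n-1}$ uniformly on $U$. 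The key observation is $|\alpha_n(z)|\ge 3/2$ on $U$ once $n$ is moderately large (this follows from the real inequality $\alpha_n\ge 2$ of Proposition~\ref{prop:xieta-basic}(b) together with a continuity argument in $\delta$), after which $|\xi_{n+1}(z)|\ge(3/2)|\xi_n(z)|-1$ closes the induction. Consequently $|\alpha_{n+1}(z)-\alpha_n(z)|=|\xi_n(z)|^{-1}=O(\rho^{-n})$ uniformly on $U$, so $\alpha_n\to\ali$ uniformly on $U$; the limit is holomorphic on $U$ and therefore real-analytic on $[1,2]$.

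For the convexity statement I would write
\[
\ali(t)=\alpha_N(t)+\sum_{n\ge N}\frac{1}{\xi_n(t)}
\]
and estimate the second derivative of the tail by Cauchy's formula applied on the circle $|z-t|=\delta$ contained in $U$:
\[
\left|\left(\tfrac{1}{\xi_n}\right)''(t)\right|\le\frac{2}{\delta^{2}}\max_{|z-t|=\delta}\frac{1}{|\xi_n(z)|}\le\frac{C}{\delta^{2}\rho^{n}}.
\]
Summing gives $|(\ali-\alpha_N)''(t)|\le C'\rho^{-N}$ uniformly on $[1,2]$. On the other hand, $\alpha_N$ is an explicit rational function, and for some moderate $N$ (to be fixed by the numerical verification) I would compute $\alpha_N''(t)$ in provable interval arithmetic on $[1,2]$ and check that its minimum strictly exceeds $C'\rho^{-N}$. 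This establishes $\ali''(t)>0$ on $[1,2]$, hence strict convexity. The same contour argument, applied separately to each partial sum, gives convexity of every $\alpha_n$ (for small $n$ this is a direct derivative computation; for $n\ge N$ it follows from convexity of $\alpha_N$ together with convexity of each remaining $1/\xi_k$, the latter being a consequence of the same Cauchy bound combined with a direct check).

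Finally, the boundary values and the two crossings of the level $e$: the identity $\alpha_n(1)=\alpha_{n-1}(2)$ from Proposition~\ref{prop:xieta-basic}(d), passed to the limit, gives $\ali(1)=\ali(2)$. A rigorous numerical evaluation of $\alpha_N(1)$ (say for $N=20$, controlling the tail by the same $O(\rho^{-N})$ estimate) shows $\ali(1)>e$, and evaluating $\ali$ at a specific interior point such as $t_*=3/2$ shows $\ali(t_*)<e$. Strict convexity then forces a unique minimizer $t_o\in(1,2)$ with $\ali(t_o)<e$ and exactly two roots $t_a<t_o<t_b$ of $\ali(t)=e$ in $(1,2)$. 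The main obstacle, and where most of the appendix's length will be spent, is precisely the rigorous numerical bookkeeping: one must choose $N$ and $\delta$ so that the explicit interval-arithmetic lower bound for $\alpha_N''(t)$ (and likewise for $\alpha_N(1)-e$, and for $e-\alpha_N(t_*)$) exceeds the tail bound $C'\delta^{-2}\rho^{-N}$ by a positive margin uniformly on $[1,2]$; everything else in the argument is soft complex analysis.
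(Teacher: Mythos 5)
Your plan is essentially the paper's: establish uniform convergence of $\alpha_n$ on a complex neighbourhood of $[1,2]$, bound the tail of $\alpha_\infty''$ via Cauchy's integral formula, and reduce strict convexity and the sign comparisons with $e$ to finitely many provable numerical evaluations of an explicit $\alpha_N$. The main cosmetic difference is the contour: you use a thin rectangular strip of half-width $\delta$, whereas the paper uses a wedge-shaped region (radii $0.25$ to $3.5$, opening half-angle $\pi/4$), which keeps $|z-t|\geq t/\sqrt 2$ for every $t\in[1,2]$ and so avoids having to balance a free parameter $\delta$ against the tail index $N$. Both choices work; the wedge simply gives a cleaner explicit constant and smaller $N$.

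There is, however, one concrete error you should fix. You argue that $\alpha_n$ ($n\ge N$) is convex because $\alpha_N$ is convex and ``each remaining $1/\xi_k$ is convex.'' That last claim is false: by Proposition~\ref{prop:Xmin}(c) the functions $\xi_k$ have an interior minimum $t_k^0\in(1,2)$ once $k\geq 7$, and since $\xi_k$ is strictly convex there (Lemma~\ref{lem:conv-xi}), at $t_k^0$ one has $(1/\xi_k)''=-\xi_k''/\xi_k^2<0$, so $1/\xi_k$ is concave near its maximum. The correct argument, which the paper uses, is to bound the magnitude of the whole tail uniformly: $|\delta_{N,n}''(t)|\leq\sum_{k\geq N}|(1/\xi_k)''(t)|\leq C'\rho^{-N}$ for every $n>N$ (and $n=\infty$), and then verify numerically that $\min_{[1,2]}\alpha_N''$ exceeds this bound. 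Your Cauchy estimate already delivers this; just don't route it through a false claim about the sign of each term. One more loose end: you assert that $\delta$ can be chosen so that no $\alpha_n$ has a pole in $\overline U$, uniformly over all $n$, but this needs to be established (the paper handles it via the inductive lower bounds on $|\xi_n|$ plus a Rouch\'e argument in Lemma~\ref{lem:alphaD}); it is recoverable from your $|\xi_n(z)|\geq c\rho^{n-1}$ bound, provided you verify that bound without implicitly assuming the absence of poles.
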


\begin{figure}[htb]
\begin{picture}(260,255)
\put(0,0){\includegraphics[scale=0.5]{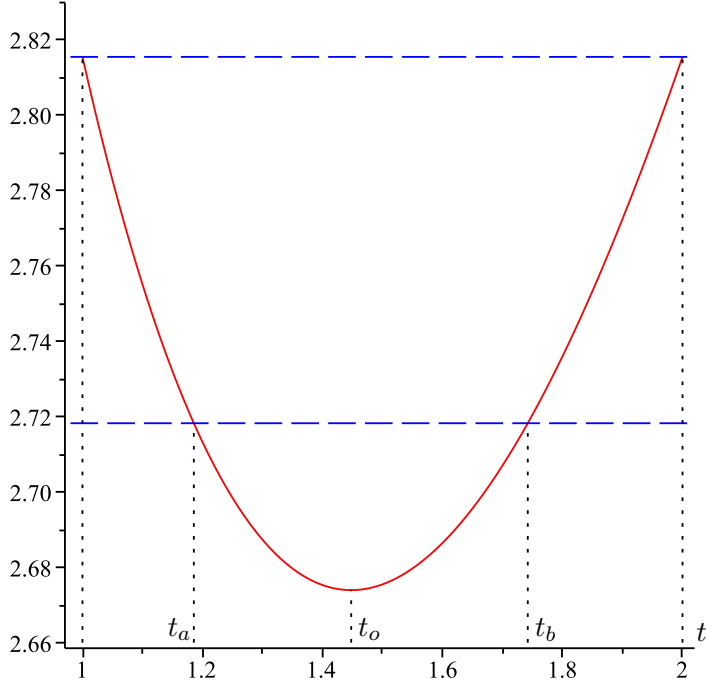}
}
% Origing: file xieta.mw
\put(260,14){\small $t$}
\put(60,16){\small $t_a$}
\put(132,16){\small $t_o$}
\put(199,16){\small $t_b$}
\end{picture}
\caption{Function $\alpha_\infty(t)$ on the interval $[1,2]$} 
\label{fig:alphaplot}
\end{figure}

In \eqref{num-alpha} the numerical values are given for reference. As a matter of fact, only
$t_b$ will be relevant in the proof of Theorem~\ref{thm:main}.

\begin{equation}
\label{num-alpha}
\ba{l}
\alpha_\infty(1)=\alpha_\infty(2)\approx 2.815572650,
%2.815572649889378477459567764887611469473
\\[0.7ex]
%The point of minimum: [checked 7 digit accuracy - Maple {\verb!funceq_iter.mw!}]
t_o\approx 1.447847, 
\quad 
\alpha_\infty(t_o)\approx 2.673953412
\\[0.7ex] 
t_a\approx 1.185591828, %796,
 \quad
t_b\approx 1.742084284. %155.
\ea
\end{equation}

The behaviour of $\alpha_\infty$ is simple, but to prove it analytically is not an easy task.
Our proof is rather long and technical. It is given in Appendix~\ref{app:alpha}.

\smallskip
The next lemma, which is a simple corollary
of Proposition~\ref{prop:alpha}, will be important in
the proof of Proposition~\ref{prop:Xmin}(b) below.

\begin{lemma}
\label{lem:conv-xi}
The functions $\xi_n(t)$, $n=1,2,\dots$, are convex
in $t\in[1,2]$.
\end{lemma}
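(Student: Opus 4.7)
I argue $\xi_n''(t)\geq 0$ on $[1,2]$ by strong induction on $n$. The base cases $n=1,2,3$ are immediate from the explicit formulas in the table following \eqref{recrelalxi}: $\xi_1''=0$, $\xi_2''=2$, and $\xi_3''(t)=6t+2t^{-3}>0$.

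The starting point of the inductive step is the basic recurrence \eqref{recxi}. Differentiating twice, and using the identity $\xi_n'\xi_{n-1}-\xi_n\xi_{n-1}'=\xi_{n-1}^2\alpha_n'$ to simplify the Wronskian-type contribution, one obtains
$$
\xi_{n+1}''\;=\;2\xi_{n-1}(\alpha_n')^2\;+\;2\alpha_n\xi_n''\;-\;\alpha_n^2\xi_{n-1}''.
$$
The first summand is manifestly nonnegative (it is the ``Wronskian reserve''), and the second is nonnegative by the inductive convexity of $\xi_n$. The only obstacle is the subtractive term $-\alpha_n^2\xi_{n-1}''$, which cannot in general be absorbed by the term $2\alpha_n\xi_n''$ without additional structure.

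To propagate the induction I strengthen the statement to
$$
\mathcal{P}_n:\quad 2\xi_n''\,\xi_{n-1}\;\geq\;\xi_n\,\xi_{n-1}''\qquad\text{on }[1,2].
$$
If $\mathcal{P}_n$ holds, then $\alpha_n^2\xi_{n-1}''=\alpha_n\cdot\alpha_n\xi_{n-1}''\leq 2\alpha_n\xi_n''$, and substitution into the display above yields $\xi_{n+1}''\geq 2\xi_{n-1}(\alpha_n')^2\geq 0$, which is convexity at the next step. The base cases $\mathcal{P}_1,\mathcal{P}_2,\mathcal{P}_3$ reduce to routine polynomial inequalities readily checked from the table.

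\textbf{Main obstacle.} The hard step is to propagate $\mathcal{P}_n$ to $\mathcal{P}_{n+1}$, i.e., $2\xi_{n+1}''\xi_n\geq\xi_{n+1}\xi_n''$. After substituting the formula for $\xi_{n+1}''$, using $\xi_{n+1}=\alpha_{n+1}\xi_n$, and invoking $\mathcal{P}_n$ to bound the subtractive term, one is reduced to a residual inequality of the form $4\xi_{n-1}(\alpha_n')^2\geq \alpha_{n+1}\xi_n''$. I expect to close this gap by combining (i) the convexity of $\alpha_n$ and the local Taylor expansion at $t_o$ supplied by Proposition~\ref{prop:alpha}, yielding a quantitative lower bound of the form $(\alpha_n'(t))^2\geq c_n(t-t_o)^2$ away from $t_o$, (ii) the exponential lower bound $\xi_{n-1}\geq 2^{n-2}$ and the bounds $2\leq\alpha_n<3$ from Proposition~\ref{prop:xieta-basic}(b), and (iii) an upper bound on $\xi_n''$ derived iteratively from the same recurrence. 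Balancing the decay of $(\alpha_n')^2$ near $t_o$ against the smallness of $\xi_n''$ in the same region is the principal technical challenge.
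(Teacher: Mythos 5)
Your Wronskian identity is correct: differentiating $\xi_{n+1}=\xi_n^2/\xi_{n-1}+1$ twice and simplifying does give
$$\xi_{n+1}''=2\xi_{n-1}(\alpha_n')^2+2\alpha_n\xi_n''-\alpha_n^2\xi_{n-1}''.$$
But the proof as written is not a proof: you explicitly stop at the inductive step for $\mathcal{P}_n$ and replace it with a plan you ``expect'' to carry out. Worse, the plan cannot succeed. The residual inequality you aim for, $4\xi_{n-1}(\alpha_n')^2\geq\alpha_{n+1}\xi_n''$, is actually \emph{false}: Proposition~\ref{prop:alpha} (and its proof in Appendix~\ref{app:alpha}) guarantees that for $n\geq 3$ the convex function $\alpha_n$ has an interior critical point $t_{n,o}\in(1,2)$, and there the left-hand side vanishes while $\xi_n''(t_{n,o})=\alpha_n''(t_{n,o})\,\xi_{n-1}(t_{n,o})+\alpha_n(t_{n,o})\,\xi_{n-1}''(t_{n,o})>0$ (using $\alpha_n''>0$, $\xi_{n-1}>0$, and the inductive $\xi_{n-1}''\geq 0$). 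So your reduction, once you bound $-\alpha_n^2\xi_{n-1}''$ via $\mathcal{P}_n$, overshoots into an unprovable statement. The combination of Taylor bounds for $(\alpha_n')^2$ near $t_o$ with $\xi_{n-1}\geq 2^{n-2}$ and $\alpha_n<3$ that you sketch cannot rescue this, because nothing decays at $t_{n,o}$: the obstruction is not quantitative but exact.

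The paper's own argument goes a different and much shorter way. It does not attempt to derive $\xi_n''\geq 0$ from the $\xi$-recurrence in isolation. Instead it takes as input the separately proved fact that each $\alpha_n$ is convex on $[1,2]$ (this is the ``Also the functions $\alpha_n(t)$ are convex'' clause of Proposition~\ref{prop:alpha}, established in Appendix~\ref{app:alpha} via Cauchy-integral estimates on a contour around $[1,2]$), and then runs a one-step induction directly on the factored identity $\xi_{n+1}=\alpha_n\xi_n+1$ with base case $\xi_1(t)=t$. All the analytic difficulty is thus concentrated in Proposition~\ref{prop:alpha}, and Lemma~\ref{lem:conv-xi} is deliberately a light corollary of it. Your proposal, by contrast, tries to re-derive convexity from scratch with a self-referential strengthening, does not establish $\mathcal{P}_n$, and the concrete closing inequality you would need is false at the critical points of $\alpha_n$. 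You should abandon the $\mathcal{P}_n$ route and instead lean on the convexity of $\alpha_n$ already available from Proposition~\ref{prop:alpha}.
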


\begin{proof}
The claim is true for $n=1$, since $\xi_1(t)=t$.
The general case follows by induction due to
the identity $\xi_{n+1}(t)=\alpha_n(t)\xi_n(t)+1$
and the convexity of $\alpha_n(t)$. 
\end{proof}

\subsection{Special points on the curves \texorpdfstring{$\gamma_n[1,2]$}{gamma[n]}}
\label{ssec:special-points}

Recall that the special values $\Xmin_n=\min_{t\geq 1}\xi_n(t)$ 
have been defined in Proposition~\ref{prop:xieta-basic}(c).
Thus $\Xmin_n$ is such a value that the equation
\beq{Tnx}
T_n(\tau)=x
\eeq
has a solution if and only if $x\geq\Xmin_n-1$.

In Sec.~\ref{ssec:experiment} we have identified and illustrated (Fig.~\ref{fig:xieta30-32}) special values of parameter $t$: $\Tmin_n$, $\Tleft_n$, and $\Tright_n$.
Here we define them analytically. 
($\Tleft_n$ is defined for $n\geq 7$.) For the curves $\gamma_n^T$, the corresponding values of $\tau=t-1$  will be denoted similarly: $\tau^{0}_n$, $\tau^{\ell}_n$, $\tau^{r}_n$.

\begin{prop}
\label{prop:Xmin}
{\rm(a)} $\xi_n'(t)>0$ for all $t\geq 2$ and $n\geq 1$.
Consequently, the solution $t=\Tmin_n$ of the equation  $\xi_n(t)=\Xmin_n$, belongs to $[1,2)$.
Equivalently, the solution $\tau_n^0$ of the equation 
\eqref{Tnx} with $x=\Xmin_n-1$ lies in $[0,1)$.

\smallskip
{\rm(b)} For every $n\geq 1$ and every $x\geq \Xmin_n-1$
the equation~\eqref{Tnx} has at most two solutions. 

\smallskip
{\rm(c)} The sequences $(t_n^0)$ (hence $(\tau_n^0)$) and $(\Xmin_n)$ are nondecreasing. More precisely, 
$t_n^0=1$ for $n\leq 6$ and $t_n^0>t_{n-1}^0$ for $n\geq 7$. 
\end{prop}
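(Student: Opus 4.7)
I would prove the three parts of Proposition~\ref{prop:Xmin} in order, as each builds on its predecessor. For part (a), the key analytic step is $\xi_n'(t)>0$ on $[2,\infty)$ for all $n\geq 1$. Differentiating $\xi_{n+1}=\xi_n^2/\xi_{n-1}+1$ and writing $\lambda_k:=\xi_k'/\xi_k$ gives
$$\xi_{n+1}'=\alpha_n\xi_n\bigl(2\lambda_n-\lambda_{n-1}\bigr),$$
so $\xi_{n+1}'>0$ reduces to $2\lambda_n>\lambda_{n-1}$. I would prove by induction on $n$ the stronger statement $0<\lambda_{n-1}(t)<\lambda_n(t)$ for $t\geq 2$. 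The base case $\lambda_2>\lambda_1>0$ reduces to $t^2>1$. For the inductive step, the derived recurrence $\lambda_{n+1}=(1-1/\xi_{n+1})(2\lambda_n-\lambda_{n-1})$ yields
$$\lambda_{n+1}-\lambda_n=(\lambda_n-\lambda_{n-1})-\frac{2\lambda_n-\lambda_{n-1}}{\xi_{n+1}},$$
and positivity follows from the exponential growth $\xi_{n+1}(t)\geq 2^n$ combined with the linear bound $\lambda_n(t)\leq n/t$ (maintained by the identity $\lambda_n=\sum_{k=1}^n\alpha_k'/\alpha_k$ and the inequalities $\alpha_k\geq t$, $\alpha_k'\leq 1$). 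With $\xi_n'>0$ on $[2,\infty)$ in hand, combining it with Lemma~\ref{lem:conv-xi} shows $\xi_n$ is unimodal on $[1,\infty)$, and the strict bound $\Tmin_n<2$ follows from $\xi_n'(2)>0$.

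Part (b) is now immediate: $\xi_n$ is strictly decreasing on $[1,\Tmin_n]$ and strictly increasing on $[\Tmin_n,\infty)$ (by convexity on $[1,2]$ combined with part (a)), so any horizontal line meets its graph in at most two points.

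For part (c), since $\xi_n$ is convex on $[1,2]$ the derivative $\xi_n'$ is nondecreasing there, so $\Tmin_n=1$ if and only if $\xi_n'(1)\geq 0$. A direct rational computation from the recurrence (propagating $\xi_n(1)$ and $\xi_n'(1)$ from $\xi_0(1)=\xi_1(1)=1$) confirms $\xi_n'(1)>0$ for $n=1,\dots,6$ while $\xi_7'(1)<0$; hence $\Tmin_n=1$ for $n\leq 6$ and $\Tmin_7>1$. For $n\geq 7$, I would prove $\Tmin_{n+1}>\Tmin_n$ by induction. At $t=\Tmin_n$ we have $\lambda_n(\Tmin_n)=0$, so the identity from part (a) specializes to $\xi_{n+1}'(\Tmin_n)=-\alpha_n(\Tmin_n)^2\,\xi_{n-1}'(\Tmin_n)$. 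By the inductive hypothesis $\Tmin_{n-1}<\Tmin_n$ (base case $\Tmin_6=1<\Tmin_7$) and strict monotonicity of $\xi_{n-1}'$ on $[1,2]$ (from convexity plus real-analyticity, as $\xi_{n-1}$ is not affine for $n-1\geq 2$), we get $\xi_{n-1}'(\Tmin_n)>0$. Thus $\xi_{n+1}'(\Tmin_n)<0$, and the unimodal structure from (b) forces $\Tmin_{n+1}>\Tmin_n$. Finally, $\Xmin_{n+1}=\alpha_n(\Tmin_{n+1})\xi_n(\Tmin_{n+1})+1\geq\xi_n(\Tmin_{n+1})+1>\xi_n(\Tmin_{n+1})\geq\Xmin_n$.

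The main technical obstacle is part (a), where maintaining the inductive loop requires carrying auxiliary quantitative bounds on $\lambda_n$ through the induction to defeat the subtractive term $(2\lambda_n-\lambda_{n-1})/\xi_{n+1}$. The numerical anchor $\xi_7'(1)<0$ in part (c) is essential but easily verified; once in place, the inductive monotonicity of $(\Tmin_n)$ proceeds cleanly.
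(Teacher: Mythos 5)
Your overall architecture for all three parts tracks the paper's proof closely: you establish positivity of $\xi_n'$ on $[2,\infty)$, combine it with convexity (Lemma~\ref{lem:conv-xi}) to get unimodality, count intersections, and run an index argument for the monotonicity of $(\Tmin_n)$ anchored by the numerical computation $\xi_7'(1)<0$. Two comments.

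\textbf{Part (a) has a real gap as written.} You propose the inductive loop $\lambda_n-\lambda_{n-1}>(2\lambda_n-\lambda_{n-1})/\xi_{n+1}$ and claim it closes from $\xi_{n+1}\ge 2^n$ and $\lambda_n\le n/t$. But this step-by-step comparison also needs a \emph{lower} bound on $\lambda_n-\lambda_{n-1}=\alpha_n'/\alpha_n$, which your stated bounds do not supply, and the crude telescoped version $\sum_{n=1}^N\frac{2\lambda_n-\lambda_{n-1}}{\xi_{n+1}}<1/t$ fails to follow from those two bounds near $t=2$ (the resulting upper bound for the infinite sum exceeds $1/t$). The paper sidesteps this: since $\alpha_{n+1}'=1-\sum_{j=1}^{n}\xi_j'/\xi_j^2$, it suffices to bound $\sum_{j\ge1}\xi_j'/\xi_j^2$ by $1$; with the same two ingredients one gets $\xi_j'/\xi_j^2=\lambda_j/\xi_j\le (j/2)\cdot 2^{-j}=j\,2^{-j-1}$ and $\sum_{j\ge1}j\,2^{-j-1}=1$, so every finite partial sum is strictly less than $1$. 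You have all the raw material — the identity $\lambda_n-\lambda_{n-1}=\alpha_n'/\alpha_n$, the bounds on $\alpha_k$, $\alpha_k'$, $\xi_j$ — but the inductive target should be $\alpha_{n+1}'>0$ proved by the summation estimate, not a term-by-term comparison of increments.

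\textbf{Part (c) is a genuine (and cleaner) alternative.} You run a forward induction $\Tmin_{n-1}<\Tmin_n\Rightarrow \Tmin_n<\Tmin_{n+1}$ using the specialization $\xi_{n+1}'(\Tmin_n)=-\alpha_n(\Tmin_n)^2\,\xi_{n-1}'(\Tmin_n)$ together with strict convexity of $\xi_{n-1}$ on $[1,2]$. The paper proves the same thing by contradiction from the logarithmic-derivative recurrence $\xi_{n+1}'/(\xi_{n+1}-1)=2\xi_n'/\xi_n-\xi_{n-1}'/\xi_{n-1}$ at a hypothetical first index of non-monotonicity; the two arguments rest on the identical algebraic fact (they are the same identity evaluated at $\xi_n'=0$), and your direct induction avoids the sign bookkeeping of the reductio. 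Your monotonicity of $\Xmin_n$ via $\Xmin_{n+1}=\alpha_n\xi_n+1$ at $\Tmin_{n+1}$ is likewise fine; the paper's shorter route $\xi_n(t)<\xi_{n+1}(t)$ for all $t$ gives the same conclusion. Part (b) is identical to the paper's.

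Summary: fix part (a) by replacing the step-by-step increment comparison with the direct estimate $\sum_{j\ge1}\xi_j'/\xi_j^2<1=\alpha_1'$; parts (b) and (c) are sound, and (c) is a nice streamlining of the paper's contradiction argument.
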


\begin{definition}
\label{def:t_pm}
If the equation~\eqref{Tnx} has two distinct solutions,
the smaller will be denoted $\tau_n^-(x)$ and the larger $\tau_n^+(x)$. 

If there is a unique solution, it will be denoted $\tau_n^+(x)$. In this case we leave $\tau_n^-(x)$ undefined, except when $x=\Xmin_n-1$. We put $\tau_n^-(\Xmin_n-1)=\tau_n^+(\Xmin_n-1)=\tau_n^0$.  

Similar notation $t_n^{\pm}(x)$ will be used in reference to the equation $\xi_n(t)=x$.
\end{definition}

\begin{proof}
(a) Since $\xi_n=\alpha_n\alpha_{n-1}\dots\alpha_1$,
it suffices to prove that $\alpha'_n>0$ for
$t\geq 2$ and all $n\geq 1$. We take this inequality
as the induction hypothesis. It is true for $n=1$.

We have $\alpha_n=\alpha_1+\sum_{j=1}^{n-1}\xi_j^{-1}$
and $\alpha_1=t$.
We need to prove that
$$
 \sum_{j=1}^{n-1}\frac{\xi'_j}{\xi_j^2}< 1=\alpha_1'.
$$
As a consequence of the induction hypothesis, 
$\xi'_k>0$ for $1\leq k\leq n-1$ and $t\geq 2$.
Hence 
$\alpha'_j<\alpha'_1$, so $0<\xi'_j/\xi_j=\sum_{k=1}^j
\alpha'_k/\alpha_k<j/t\leq j/2$ for $j=1,\dots,n-1$.
Also, $\xi_j\geq\alpha_1^j\geq 2^j$. The required estimate follows:
$$
 \sum_{j=1}^{n-1}\frac{\xi'_j}{\xi_j^2}\leq
\sum_{j=1}^{n-1}\frac{j}{2^{j+1}}<\sum_1^\infty\frac{j}{2^{j+1}}=1.
$$

\smallskip
(b) The functions $\xi_n(t)$, are convex in $1\leq t\leq 2$ by Lemma~\ref{lem:conv-xi}; more precisely, for $n\geq 2$ they are strictly convex. They are increasing in $[2,\infty)$ by part (a).
Therefore the equation $\xi_n(t)=x$ has at most two solutions.

\smallskip
(c) Due to the convexity of $\xi_n(\cdot)$, there are two possibilities: either (i) $t^0_n=1$ and $\Xmin_n=\xi_n(1)$ or (ii) $t^0_n>1$, then $\xi'(t^0_n)=0$ and the function $\xi(\cdot)$ decreases from $\xi_n(1)$ to $\Xmin_n$ as $t$ changes from $1$ to $t^0_n$. (Cf.\ the backtracking segment of the curve $\tilde\gamma_7$ in Fig.~\ref{fig:xieta6-7}.)

In both cases, $\xi_n'(t)>0$ for $t>t_n^0$.
%if $\xi_n'(t_*)\geq 0$ for some $t_*$, then
%$\xi_n'(t)> 0$ for all $t>t_*$. (In other words, the sign %of $\xi_n'$ cannot change from $+$ to $-$.)

Suppose that the sequence $(t_n^0)$ is not monotone as claimed. Let $n$ be the least index for which $t^0_n>t^0_{n+1}$. Then $\xi'_{n+1}(t^0_n)<0$ and $\xi'_n(t^0_n)=0$. Also
 $t^0_{n-1}\leq t^0_n$, so $\xi'_{n-1}(t^0_n)\leq 0$. 
The combination of signs $\xi'_{n+1}<0$, $\xi'_n=0$,
$\xi'_{n-1}\leq 0$ contradicts the recurrence relation
$$
 \frac{\xi'_{n+1}}{\xi_{n+1}-1}=\frac{2\xi'_n}{\xi_n}-\frac{\xi'_{n-1}}{\xi_{n-1}},
$$ 
which follows from \eqref{recxi} by logarithmic differentiation.

By the recurrence relations \eqref{recxi} and the one above we find that $\xi'_n(1)>0$ for $1\leq n\leq 6$,
while $\xi'_7(1)=-\frac{19661554943536}{328636389375}<0$. 
Therefore $t^0_n>1$ for $n\geq 7$.

\smallskip
For any $t$, by definition of $\Xmin_n$, we have
$\Xmin_n\leq \xi_n(t)$. Since $\xi_n(t)<\xi_{n+1}(t)$,
it follows that
$\Xmin_n\leq\inf_{t\geq 1}\xi_{n+1}(t)=\Xmin_{n+1}$.
It is easy to see that the inequality is strict when 
$t^0_n<t^0_{n+1}$, i.e. for $n\geq 6$.
\end{proof}

Next we will analytically prove that the branches of the curve $\gamma_n$ are always situated as shown in  Fig.~\ref{fig:xieta30-32} or Fig.~\ref{fig:xieta6-7}:
the ``backward'' branch (where $t<t^0_n$)
lies above the ``forward'' branch. 

\begin{prop}
\label{prop:pmbranches}
If the equation $\xi_n(t)=x$ has two distinct solutions, then $\eta_n(t_n^+(x))<\eta_n(t_n^-(x))$.
Equivalently,
%if $\tau_n^+(x)$ and $\tau_n^-(x)$ are defined and %$x\neq\Xa_n$, then
$G_n(\tau_n^+(x))<G_n(\tau_n^-(x))$.
\end{prop}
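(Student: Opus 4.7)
The plan is to express the difference as a single integral whose sign can be read off.  From Proposition~\ref{prop:xieta-basic}(d), $\eta_n'(t)=\xi_n'(t)/\xi_{n-1}(t)$, and since $\xi_n(t^-)=\xi_n(t^+)$ yields $\int_{t^-}^{t^+}\xi_n'(t)\,dt=0$, for any constant $c$
\[
 \eta_n(t^+)-\eta_n(t^-)=\int_{t^-}^{t^+}\xi_n'(t)\Bigl(\frac{1}{\xi_{n-1}(t)}-c\Bigr)\,dt.
\]
Taking $c=1/\xi_{n-1}(\Tmin_n)$ makes the parenthesis vanish at the cusp $t=\Tmin_n$, the natural pivot between the backward and forward branches.

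By Proposition~\ref{prop:Xmin}(a) and Lemma~\ref{lem:conv-xi}, $\xi_n$ is convex on $[1,2]$ with unique minimum at $\Tmin_n\in(t^-,t^+)$, so $\xi_n'<0$ on $[t^-,\Tmin_n)$ and $\xi_n'>0$ on $(\Tmin_n,t^+]$.  The integrand is therefore $\le 0$ throughout, and strictly negative on a set of positive measure, provided that $\xi_{n-1}(t)\le\xi_{n-1}(\Tmin_n)$ on $[t^-,\Tmin_n]$ and $\xi_{n-1}(t)\ge\xi_{n-1}(\Tmin_n)$ on $[\Tmin_n,t^+]$.  Both conditions hold at once whenever $\xi_{n-1}$ is monotone on $[t^-,t^+]$; by Proposition~\ref{prop:Xmin}(c) this occurs whenever $\Tmin_{n-1}\le t^-$, which is automatic for $n$ small (the claim is vacuous for $n\le 6$) and, for larger $n$, whenever $x=\xi_n(t^\pm)$ is not too close to the upper endpoint $\xi_n(1)=\xi_{n-1}(2)$.

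The main obstacle is the residual case $t^-<\Tmin_{n-1}$, which is forced only when $x$ is near $\xi_n(1)$ and $n$ is large enough that $\Tmin_{n-1}>1$.  I would attempt to handle this by induction on $n$.  The recurrence $\xi_{n+1}=\xi_n^2/\xi_{n-1}+1$, together with the identity $\xi_n(t^-)=\xi_n(t^+)$, gives the equivalence
\[
 \xi_{n-1}(t^-)<\xi_{n-1}(t^+)\;\Longleftrightarrow\;\xi_{n+1}(t^-)>\xi_{n+1}(t^+),
\]
so the inequality may be shifted across indices.  Combined with the boundary identities $\xi_n(1)=\xi_{n-1}(2)$, $\eta_n(1)=\eta_{n-1}(2)$ of Proposition~\ref{prop:xieta-basic}(d) --- which in effect glue the right end of the branch of $\gamma_{n-1}$ to the left end of $\gamma_n$ --- a bad pair $(t^-,t^+)$ at level $n$ should match a favorable pair at level $n-1$ to which the inductive hypothesis applies.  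Carrying out this reduction cleanly and verifying that the bad range is fully covered is where I expect the real work to lie.
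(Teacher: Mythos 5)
Your $t$-integral is a change of variables away from the paper's argument: splitting $[t^-,t^+]$ at the cusp $\Tmin_n$ and substituting $y=\xi_n(t)$ turns
\[
\int_{t^-}^{t^+}\frac{\xi_n'(t)}{\xi_{n-1}(t)}\,dt
\quad\text{into}\quad
\int_{\Xmin_n}^{x}\left(\frac{1}{\xi_{n-1}(t^+(y))}-\frac{1}{\xi_{n-1}(t^-(y))}\right)dy,
\]
which is exactly the paper's integral. The two proofs therefore diverge only in how the sign of the integrand is established, and this is where your argument has a genuine gap. Pivoting at $c=1/\xi_{n-1}(\Tmin_n)$ gives pointwise control of the $t$-integrand only if $\xi_{n-1}(t)\le\xi_{n-1}(\Tmin_n)$ on the whole backward branch $[t^-,\Tmin_n]$, which by convexity of $\xi_{n-1}$ amounts to $\xi_{n-1}(t^-)\le\xi_{n-1}(\Tmin_n)$. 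You correctly observe that this is automatic when $t^-\ge\Tmin_{n-1}$, but it is not evident in the residual case $t^-<\Tmin_{n-1}$, which does occur for $n\ge 8$ once $x$ is close to $\xi_n(1)$.

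Your proposed repair for the residual case does not close this gap. The equivalence $\xi_{n-1}(t^-)<\xi_{n-1}(t^+)\iff\xi_{n+1}(t^-)>\xi_{n+1}(t^+)$ is correct (since $\xi_{n+1}(t^\pm)=\xi_n(t^\pm)^2/\xi_{n-1}(t^\pm)+1$ and $\xi_n(t^-)=\xi_n(t^+)$), but it relates $\xi_{n-1}$ and $\xi_{n+1}$ evaluated at the \emph{same} pair $(t^-,t^+)$, which is defined by equality of $\xi_n$. An inductive hypothesis at level $n-1$ would instead concern pairs defined by equality of $\xi_{n-1}$, and there is no stated correspondence between these two families of pairs. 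So the induction as sketched does not reduce the problem.

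The ingredient you are missing is the paper's descent argument: if $\xi_{n-1}(t^-(y))=\xi_{n-1}(t^+(y))$ for some $y>\Xmin_n$, then combining this with $\xi_n(t^-(y))=\xi_n(t^+(y))$ and the inverse recurrence $\xi_{j-1}=\xi_j^2/(\xi_{j+1}-1)$ forces $\xi_j(t^-)=\xi_j(t^+)$ for all $j$ down to $j=1$, hence $t^-=t^+$, a contradiction. Thus $\kappa(y)=\xi_{n-1}(t^+(y))-\xi_{n-1}(t^-(y))$ never vanishes on $(\Xmin_n,x]$; combined with $\kappa(\Xmin_n)=0$ and $\kappa>0$ just above $\Xmin_n$ (from $\Tmin_n>\Tmin_{n-1}$), continuity gives $\kappa(y)>0$ throughout. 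That is what makes the $y$-integrand pointwise negative for \emph{all} admissible $x$, not just small ones, and it is the step your proposal needs to recover.
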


\begin{proof}
The equation $\xi_n(t)=y$ has two distinct solutions for
all $y\in(\Xmin_n,x]$. For $y=\Xmin_n$ the value
$t_n^-(y)=t^0_n=t_n^+(y)$ is also defined. Put
$$
 \kappa(y)=\xi_{n-1}(t_n^{+}(y))-\xi_{n-1}(t_n^{-}(y)).
$$

Clearly, $\kappa(\Xmin_n)=0$.
We will prove that: 
(i) $\kappa(\Xmin_n+\eps)>0$
for sufficiently small $\eps>0$, 
and 
(ii) $
 \kappa(y)\neq 0 
$
for any $y\in(\Xmin_n,x]$.
By continuity of $\kappa(\cdot)$ it follows then that
$\kappa(x)>0$. 

\smallskip
Proof of (i): Due to the inequality $t_n^0>t_{n-1}^0$ (Proposition~\ref{prop:Xmin}(c)), the function 
$\xi_{n-1}(t)$ increases in the neighborhood of $t^0_n$.

\smallskip
Proof of (ii): Suppose, by way of contradiction, that 
$\kappa(y_*)=0$ for some $y_*>\Xmin_n$. 
Let $t_*^\pm=t_n^\pm(y_*)$.
Then 
$y=\xi_{n}(t_*^-)=\xi_{n}(t_*^+)$
and $\xi_{n-1}(t_*^-)=\xi_{n-1}(t_*^+)$.
By the inverse recurrence relation we conclude that
$\xi_{j}(t_*^-)=\xi_{j}(t_*^+)$ for $j$ from $n$ down to $1$. For $j=1$ it results in $t_*^-=t_*^+$,
a contradiction.

Now, by \eqref{dxieta}
$$
 \frac{d\eta_n(t^{\pm}(y))}{d y}=\frac{1}{\xi_{n-1}(t^{\pm}(y))}.
$$
The inequality $\kappa(y)>0$ implies
$$
 \frac{d\eta_n(t^{+}(y))}{d y}<
\frac{d\eta_n(t^{-}(y))}{d y}.
$$
Integrating from $y=\Xmin_n$ to $x$ we get $\eta_n(t^{+}(x))<\eta_n(t^{-}(x))$.
\end{proof}

\begin{definition}
\label{def:xcross}
For $n\in\NN$, let $\Xcross_n$ be defined by%
\footnote{The subtraction of $1$ 
 makes this definition consistent with notation $\Xcross_n$ in Sec.~\ref{ssec:experiment}.}
$\Xcross_n-1=\inf\{x\mid\nu(x)=n\}
=\max\{x\mid\nu(x)=n-1\}$, where $\nu(\cdot)$
is the critical index (Definition~\ref{def:crind}).
%Sec.~\ref{ssec:crind}
\end{definition}
%The function $\Xcross_{\cdot}$ is the maximal right-inverse to $\nu(\cdot)+1$.

\begin{prop}
\label{prop:argmin_intervals}
{\rm (a)} $F(x)=F(x_{n-1})$ for $0<x\leq \Xcross_n$
and  $F(x)<F(x_{n-1})$ for $x>\Xcross_n-1$.
The point $(\Xcross_n-1,\,F(\Xcross_n-1))$ is common to the
curves $\gamma_{n}^T$ and $\gamma_{n+1}^T$.
Equivalently, the point $(\Xcross_n,\,1+F(\Xcross_n-1))$ is common to the
curves $\gamma_{n}$ and $\gamma_{n+1}$.

\smallskip
{\rm (b)} $\Xcross_n=\Xmin_{n}=\xi_n(1)$ for $1\leq n\leq 6$
and $\Xcross_n<\xi_n(1)$ for $n\geq 7$.
\end{prop}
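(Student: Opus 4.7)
My plan for part (a) is to unwind Definition~\ref{def:xcross} via the monotonicity of the critical index established in Proposition~\ref{prop:crind}. By definition $\Xcross_n-1$ is the transition point of $\nu$, and the characterization $\nu(x)=\min\{k:F(x)=F_k(x)\}$ combined with the non-increasing monotonicity of $(F_k(x))_k$ (Proposition~\ref{prop:monotfn}(b)) translates the two-sided description of $\Xcross_n-1$ into the dichotomy for $F_{n-1}(x)$ versus $F(x)$ on the two sides of the boundary. Continuity of both $F_{n-1}$ (a minimum of a continuous function over a compact region, Proposition~\ref{prop:monotfn}(c)) and $F$ (the monotone limit of the $F_k$'s, locally stabilizing by Proposition~\ref{prop:funeqf}) extends the equality to the boundary point itself. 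The ``common point'' claim is then the assertion that at $x=\Xcross_n-1$ two interior extremal trajectories of consecutive lengths both realize the minimum $F(\Xcross_n-1)$: one continues the branch approaching from $x<\Xcross_n-1$ while the other begins the branch entering $x>\Xcross_n-1$, so the respective parametric curves $\gamma_n^T$ and $\gamma_{n+1}^T$ must both pass through that point.

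For part (b), the argument splits by the dichotomy of Proposition~\ref{prop:Xmin}(c). When $n\leq 6$, that result gives $\Tmin_n=1$, so $\xi_n$ is strictly increasing on $[1,\infty)$ and $\Xmin_n=\xi_n(1)$. The adjacency identities $\xi_n(1)=\xi_{n-1}(2)$ and $\eta_n(1)=\eta_{n-1}(2)$ from Proposition~\ref{prop:xieta-basic}(d), together with Proposition~\ref{prop:tau01} restricting the lower envelope to parameter values $t\in[1,2]$, show that at $x=\xi_n(1)$ the right endpoint of $\gamma_{n-1}[1,2]$ coincides with the left endpoint of $\gamma_n[1,2]$; since for $n\leq 6$ neither curve has a backward branch (by Proposition~\ref{prop:Xmin}(c) applied to $n-1$ as well), the lower envelope switches from one to the other exactly at this point, yielding $\Xcross_n=\xi_n(1)=\Xmin_n$.

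When $n\geq 7$, Proposition~\ref{prop:Xmin}(c) gives $\Tmin_n>1$, so $\xi_n$ strictly decreases from $\xi_n(1)$ down to $\Xmin_n<\xi_n(1)$ on $[1,\Tmin_n]$ and strictly increases on $[\Tmin_n,2]$. At $x=\xi_n(1)=\xi_{n-1}(2)$, the upper (backward) branch of $\gamma_n$ at $t=1$ still has $y$-coordinate $\eta_n(1)=\eta_{n-1}(2)$, matching the right endpoint of $\gamma_{n-1}$; but by Proposition~\ref{prop:pmbranches} the lower (forward) branch of $\gamma_n$ at the same $x$, at parameter $t_n^+(\xi_n(1))\in(\Tmin_n,2)$, has a strictly smaller $y$-coordinate. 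Consequently the lower envelope has already left $\gamma_{n-1}$ and is on the forward branch of $\gamma_n$ at $x=\xi_n(1)$, so the actual switching point satisfies $\Xcross_n<\xi_n(1)$.

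The main obstacle will be making the ``switching'' argument for $n\geq 7$ fully rigorous: one must establish that the lower branches of $\gamma_{n-1}$ and $\gamma_n$ intersect transversally at a single point in the relevant $x$-range, and that this intersection is precisely $\Xcross_n$ in the sense of Definition~\ref{def:xcross}. Existence of the intersection will follow from an intermediate-value argument on the $\eta$-difference of the two lower branches parametrized by the common $x$-coordinate, combining Proposition~\ref{prop:pmbranches} for the branch ordering on the $\gamma_n$ side with the monotonicity of $\gamma_{n-1}$'s right piece given by Proposition~\ref{prop:Xmin}(a)--(c); uniqueness/transversality will rest on the strict convexity provided by Lemma~\ref{lem:conv-xi}.
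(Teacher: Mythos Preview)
Your plan for both parts is essentially the paper's own argument. For (a) the paper likewise invokes Definition~\ref{def:xcross} together with the monotonicity of $\nu$ (Proposition~\ref{prop:crind}) to obtain $F(x)=F_{n-1}(x)$ on one side and $F(x)<F_{n-1}(x)$ on the other, and then observes that the graph of $F$ coincides with that of $F_{n-1}$ in a left neighbourhood and with that of $F_n$ in a right neighbourhood of $\Xcross_n-1$, so the two curves meet there. For (b) the paper also splits at $n\leq 6$ versus $n\geq 7$: when $n\leq 6$ it notes that the abscissa ranges $[\xi_k(1),\xi_k(2))$ are contiguous and non-overlapping, forcing the meeting point to be the common endpoint $\xi_n(1)=\Xmin_n$; when $n\geq 7$ it uses exactly your observation via Proposition~\ref{prop:pmbranches} that the forward branch of $\gamma_n$ at abscissa $\xi_n(1)$ lies strictly below $\eta_n(1)=\eta_{n-1}(2)$, ruling out $\Xcross_n=\xi_n(1)$.

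Two remarks. First, your closing paragraph over-scopes the task: the proposition asserts only the strict inequality $\Xcross_n<\xi_n(1)$, not uniqueness or transversality of any branch intersection. The paper's proof stops once $\Xcross_n=\xi_n(1)$ is excluded, having already noted that $\gamma_{n-1}[1,2]$ carries no abscissa beyond $\xi_{n-1}(2)=\xi_n(1)$; the intermediate-value and convexity refinements you outline pertain to the subsequent discussion of $\Tleft_n$, $\Tright_n$, not to this proposition. Second, the printed statement contains an index slip (``$\gamma_n^T$ and $\gamma_{n+1}^T$''): your own reasoning, like the paper's proof, in fact shows that the curves meeting at $(\Xcross_n-1,F(\Xcross_n-1))$ are $\gamma_{n-1}^T$ and $\gamma_n^T$.
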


\begin{proof}
(a) If $x\leq\Xcross_n-1$, then by the monotonicity of
$\nu(\cdot)$ (Proposition~\ref{prop:crind}) $\nu(x)\leq n-1$, hence $F(x)=F_{\nu(x)}=F_{n-1}(x)$. 

If $x>\Xcross_n-1$, then $\nu(x)>n-1$, hence by Definition~\ref{def:crind} (of $\nu(x)$) $F_{n-1}(x)>F(x)$.

The point $(\Xcross_n-1,F(\Xcross_n-1))$ lies on the graph of the function $F(\cdot)$. In the left neighbourhood of
this point the graph of $F$ coincides with graph of $F_{n-1}$ and in the right neighbourhood -- with graph of $F_n$. Hence the curves $\gamma_{n-1}$ and $\gamma_n$ 
meet at this point.

\smallskip
(b) For $1\leq n\leq 6$ the intervals $[\xi_n(1),\xi_n(2))$
do not overlap, hence $\nu(x-1)=n-1$ for $x\leq \xi_n(1)$. 

In general, there are no points on the curve $\gamma_{n-1}[1,2]$ with abscissas greater than $\xi_{n-1}(2)$, since $\xi_{n-1}'(t)>0$ for $t\geq 2$ (Proposition~\ref{prop:Xmin}(a)). 

For $n\geq 7$, $\Xmin_n<\xi_n(1)=\xi_{n-1}(2)$.
The equality $\Xcross_n=\xi_{n}(1)$ is impossible, since
the backward branch of the curve $\gamma_n$ starting
at $(\xi_n(1),\eta_n(1))$ lies above the branch with
parameter values $t>\Xmin_n$ by Proposition~\ref{prop:pmbranches}.
\end{proof}

The graph of the function $F(x)$ is the union of the segments of the curves $\gamma_n^T$ corresponding to the parameter values $[\tau^\ell_n, \tau^r_n]\subset(\tau^0_n,1)$. We have
$$
 T_n(\tau^\ell_n)=T_{n-1}(\tau^r_{n-1})=\Xcross_n
$$ 
and 
$$
 G_n(\tau^\ell_n)=G_{n-1}(\tau^r_{n-1}).
$$
With reference to the lower envelope of the curves
$\gamma_n$, the parameter values $t_n^\ell=\tau_n^\ell+1$
and $t_n^r=\tau_n^r+1$ play the same role.

\subsection{Asymptotics of %auxiliary functions}
\texorpdfstring{$\xi_n$}{xi[n]} and 
\texorpdfstring{$\eta_n$ as $n\to\infty$}{eta[n]}}
\label{ssec:asxieta}

Put 
\begin{equation}
\label{deltan}
\delta_{n}(t)=\sum_{j=n}^\infty \frac{1}{\xi_j(t)}=\alpha_\infty(t)-\alpha_{n}(t).
\end{equation}
and define the functions $\phi(t)$ and $\psi(t)$ for $t\in[1,2]$ by
\begin{equation}
\label{phi}
\phi(t)=\sum_{j=1}^\infty\log\left(1-\frac{\delta_j(t)}{\alpha_\infty(t)}\right),
\end{equation}
\begin{equation}
\label{psi}
\psi(t)=t-\alpha_\infty(t)-\sum_{j=1}^\infty \delta_j(t).
\end{equation}

\begin{prop}
\label{prop:asphipsi}
The functions $\phi(t)$ and $\psi(t)$ are real-analytic and
the following asymptotic formulas hold:
\beq{asxin}
\log\xi_n(t)=n\log\alpha_\infty(t)+ \phi(t)+
O\left(2^{-n}\right),
\eeq
\beq{asetan}
\eta_n(t)=n\alpha_\infty(t)+ \psi(t)+
O\left(2^{-n}\right).
\eeq
\end{prop}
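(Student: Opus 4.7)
\textbf{Proof plan for Proposition~\ref{prop:asphipsi}.}

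The two asymptotic expansions come from telescoping the basic recursions \eqref{recrelalxi}, \eqref{recreleta} and substituting $\alpha_j=\alpha_\infty-\delta_j$ via \eqref{deltan}. For $\log\xi_n$, since $\xi_0\equiv1$, we have $\log\xi_n=\sum_{j=1}^n\log\alpha_j$; writing $\log\alpha_j=\log\alpha_\infty+\log(1-\delta_j/\alpha_\infty)$ and comparing with the infinite sum defining $\phi(t)$ in \eqref{phi} gives
$$
\log\xi_n(t)=n\log\alpha_\infty(t)+\phi(t)-R^{(1)}_n(t),\qquad R^{(1)}_n(t)=\sum_{j=n+1}^{\infty}\log\Bigl(1-\frac{\delta_j(t)}{\alpha_\infty(t)}\Bigr).
$$
For $\eta_n$, the recursion \eqref{recreleta} with $\eta_0\equiv 1$ yields $\eta_n=1+\sum_{j=0}^{n-1}\alpha_j=n\alpha_\infty+1-\sum_{j=0}^{n-1}\delta_j$. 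Using the identity $\alpha_\infty(t)=(t-1)+\delta_0(t)$ (apparent from $\alpha_1=\alpha_0+1/\xi_0=t$), one checks that the definition \eqref{psi} of $\psi$ is exactly $\psi(t)=1-\sum_{j=0}^{\infty}\delta_j(t)$, and thus
$$
\eta_n(t)=n\alpha_\infty(t)+\psi(t)+R^{(2)}_n(t),\qquad R^{(2)}_n(t)=\sum_{j=n}^{\infty}\delta_j(t).
$$

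Next I would bound both remainders using the estimate $\xi_k(t)\geq 2^{k-1}$ from Proposition~\ref{prop:xieta-basic}(b). This gives $\delta_j(t)\leq\sum_{k=j}^{\infty}2^{1-k}=2^{2-j}$, hence by Fubini $R^{(2)}_n(t)=\sum_{k=n}^{\infty}(k-n+1)/\xi_k(t)=O(2^{-n})$ uniformly in $t\in[1,2]$. Because $\alpha_\infty(t)\geq\alpha_\infty(t_o)>2$ (Proposition~\ref{prop:alpha}), the ratio $\delta_j/\alpha_\infty$ is bounded away from $1$ and tends to $0$ geometrically, so $|\log(1-\delta_j/\alpha_\infty)|\leq C\delta_j$ for all $j\geq 1$; the same geometric bound then yields $R^{(1)}_n(t)=O(2^{-n})$. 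This establishes the two displayed asymptotic formulas with the claimed uniform remainder.

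The remaining (and main) task is to justify that $\phi$ and $\psi$ are genuinely real-analytic on $[1,2]$, not just continuous. The plan is to extend the $\xi_j$ to a complex neighbourhood $U\supset[1,2]$ and upgrade the bound $\xi_k(t)\geq 2^{k-1}$ to $|\xi_k(t)|\geq c\cdot 2^k$ for $t\in U$, for some $c>0$. Since each $\xi_k$ and $\alpha_k$ is a rational function of $t$ (hence holomorphic off its poles), one can argue inductively: on a small enough complex neighborhood of $[1,2]$, the values of $\alpha_n(t)$ stay close to their real counterparts (which are $\geq 2$ by Proposition~\ref{prop:xieta-basic}(b)), and the recursion $\xi_{k+1}=\alpha_k\xi_k+1$ then propagates the lower bound $|\xi_{k+1}|\geq(2-\varepsilon)|\xi_k|-1$. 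Once this complex bound is in place, each $\delta_j(t)$ is holomorphic on $U$ with $|\delta_j(t)|\leq C2^{-j}$, so the defining series \eqref{phi} and \eqref{psi} converge uniformly on $U$; the limits are holomorphic on $U$ and thus real-analytic on $[1,2]$.

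The only genuinely nontrivial step is the last one: passing from the real estimate $\xi_k\geq 2^{k-1}$ to a uniform complex estimate on an open neighbourhood of $[1,2]$. Everything else is a mechanical rearrangement of the two telescoping sums plus the geometric tail estimate from Proposition~\ref{prop:xieta-basic}(b).
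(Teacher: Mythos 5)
Your proof is correct and uses the same telescoping decomposition as the paper: $\log\xi_n=\sum_{j=1}^n\log\alpha_j$, $\eta_n=1+\sum_{j=0}^{n-1}\alpha_j$, substitute $\alpha_j=\alpha_\infty-\delta_j$, and identify the tail with the defining series for $\phi$ and $\psi$. Your identity $\psi(t)=1-\sum_{j\geq 0}\delta_j(t)$ (using $\alpha_0=t-1$ and $\delta_0=\alpha_\infty-(t-1)$) is verified correctly, and your Fubini rearrangement for $R^{(2)}_n$ is sound.

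Two points are worth noting. First, your remainder estimate is actually cleaner and more reliable than the paper's: you derive $\delta_j\leq 2^{2-j}$ directly from the bound $\xi_k\geq 2^{k-1}$ of Proposition~\ref{prop:xieta-basic}(b), whereas the paper cites Lemma~\ref{lem:est-al-xi} ``with $a=2$''; but that lemma requires $a<|\alpha_{n_0}|^{1/2}$, and since $\alpha_n(t)<3$ on $[1,2]$ one has $|\alpha_{n_0}|^{1/2}<\sqrt{3}<2$, so $a=2$ cannot satisfy the hypotheses. Your shortcut sidesteps that slip while still reaching the needed $O(2^{-n})$. Second, you correctly identify the real-analyticity of $\phi$ and $\psi$ as the one nontrivial step; in fact the paper's proof of this proposition does not argue it at all, relying implicitly on the complex-analytic machinery of Appendix~B, where Lemma~\ref{lem:alphaD} establishes uniform convergence of $\alpha_n(z)$ (hence geometric decay of the holomorphic $\delta_j(z)$) on a complex neighbourhood of $[1,2]$. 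Your sketched plan --- holomorphically extend $\xi_j$ and $\delta_j$, propagate a uniform geometric lower bound on $|\xi_j(z)|$ via the recursion, and conclude uniform convergence of the series --- is exactly what that lemma delivers, so you are filling a genuine gap in the stated proof rather than diverging from the paper's method.
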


\begin{proof} 
The recurrence relations \eqref{recrelalxi} imply
$$
 \log\xi_n(t)=\log\xi_0(t)+\sum_{j=1}^n\log\alpha_j(t)
=\log t+\sum_{j=1}^n\log\frac{\alpha_j(t)}{\alpha_\infty(t)}
+n\log\alpha_\infty(t).
.
$$
%Since $\xi_0(t)=t$ and 
By definition, 
$\alpha_j(t)=\alpha_\infty(t)-\delta_j(t)$, so 
%we get
%$$
% \log\xi_n(t)=\log t+\sum_{j=1}^n\log\left(1-\frac{\delta_j(t)}{\alpha_\infty(t)}\right)+n\log\alpha_\infty(t).
%$$
%Hence
$$
\log\xi_n(t)-\phi(t)-n\log\alpha_\infty(t)=-
\sum_{j=n+1}^\infty\log\left(1-\frac{\delta_j(t)}{\alpha_\infty(t)}\right).
$$
Since the sequence $\xi_j(t)$ grows exponentially, taking into account the uniform estimate $\alpha_n(t)\geq 2$ from Proposition~\ref{prop:xieta-basic}(b), 
for sufficiently large $n_0$ the conditions of Lemma~
\ref{lem:est-al-xi} are met with $a=2$ and it follows by \eqref{estdeltan} that $\delta_j(t)=O(2^{-j})$, $j>n_0$. This estimate implies \eqref{asxin}.

Similarly, by the recurrence relations \eqref{recreleta}
$$
\ba{rcl}
\dst
 \eta_n(t)=\eta_0(t)+\sum_{j=0}^{n-1}\alpha_j(t)
&=&
\dst
1+n\alpha_\infty(t)-\sum_{j=0}^{n-1}\delta_j(t)
\\[2ex]
&=&
\dst
\psi(t)+n\alpha_\infty(t)+\sum_{j=n}^{\infty}\delta_j(t).
\ea
$$
The remainder term is estimated as above and we come to \eqref{asetan}.
\end{proof}

\subsection{Narrowing down the parameter domain}
\label{ssec:narrowing}

From the crude asymptotic result --- Proposition~\ref{prop:crudeas} --- we know that 
$$\frac{\eta_n(t)}{\log\xi_n(t)}=e+o(1)$$ 
on the segment of the
curve $\gamma_n$ that belongs to the lower envelope.
Hence $\log\alpha_\infty(t)/\alpha_\infty(t)=e+o(1)$
for the corresponding values of the parameter and
we conclude that $\alpha_\infty(t)$ must be close to $e$.
Therefore $t$ must be close to one of the two roots,
$t_a$ or $t_b$, of the equation $\alpha_\infty(t)=e$.

By Proposition~\ref{prop:pmbranches}, 
among the two values $t_1<t_2$ corresponding
to the same value of $\xi_n$, the inequality
$\eta_n(t_2)<\eta_n(t_1)$ takes place. Therefore
only the neighbourhood of $t_b$ is of interest for
determining the lower envelope of the curves $\gamma_n$.

Summarizing, we state

\begin{prop}
\label{prop:minint}
The values $t_n^\ell$ and $t_n^r$ defined at the end of
Sec.~\ref{ssec:special-points} tend to $t_b$ as $n\to\infty$. 
\end{prop}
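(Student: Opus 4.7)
\medskip
\noindent\textbf{Proof proposal.}
The plan is to combine the crude asymptotics $F(x)=e\log x+O(1)$ from Proposition~\ref{prop:crudeas} with the fine asymptotics of $\xi_n$ and $\eta_n$ from Proposition~\ref{prop:asphipsi} to pin down $\alpha_\infty(t_n^\ell)$ and $\alpha_\infty(t_n^r)$ to within $O(1/n)$ of $e$; together with Proposition~\ref{prop:alpha} this will force both sequences to accumulate only at $\{t_a,t_b\}$. The remaining task will be to exclude $t_a$, which I will do by showing that $t_n^0\to t_o$ and invoking the constraint $t_n^\ell,t_n^r>t_n^0$.

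For the main estimate, fix a sequence $t_n\in[t_n^\ell,t_n^r]$ (e.g.\ $t_n=t_n^\ell$ or $t_n=t_n^r$). By definition of the lower envelope and Proposition~\ref{prop:argmin_intervals}, the point $(\xi_n(t_n)-1,\eta_n(t_n)-1)$ lies on the graph of $F$. Since $\xi_n(t)\geq X_n^0\geq 2^{n-1}$ uniformly (Proposition~\ref{prop:xieta-basic}(c)), Proposition~\ref{prop:crudeas} gives
\[
 \eta_n(t_n)=e\log \xi_n(t_n)+O(1)
\]
as $n\to\infty$. Substituting the expansions \eqref{asxin}--\eqref{asetan} and dividing by $n$, I will obtain
\[
 \alpha_\infty(t_n)-e\log\alpha_\infty(t_n)=O(1/n).
\]
The real-valued function $h(\alpha)=\alpha-e\log\alpha$ is strictly convex with unique minimum $h(e)=0$, so this forces $\alpha_\infty(t_n)\to e$. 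By Proposition~\ref{prop:alpha}, every accumulation point of $\{t_n\}$ in $[1,2]$ is therefore either $t_a$ or $t_b$.

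To rule out $t_a$, I will first establish $t_n^0\to t_o$. The formula \eqref{asxin} gives
\[
 \log\xi_n(t)=n\log\alpha_\infty(t)+\phi(t)+O(2^{-n})
\]
uniformly on $[1,2]$; since $\phi$ is bounded and $\log\alpha_\infty$ has a strict unique minimum at $t_o$ (Proposition~\ref{prop:alpha}), any minimizer of $\log\xi_n$ on $[1,2]$ must lie in any prescribed neighborhood of $t_o$ once $n$ is large enough. Combined with Proposition~\ref{prop:Xmin}(a), which places $t_n^0$ in $[1,2)$, this yields $t_n^0\to t_o$. Because $t_a<t_o$ and $t_n^\ell,t_n^r>t_n^0$, for all sufficiently large $n$ both $t_n^\ell$ and $t_n^r$ exceed $t_a$ by a fixed positive margin. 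Since the only root of $\alpha_\infty(t)=e$ in $[t_o,2]$ is $t_b$, the proposition follows.

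The only potentially delicate point is the uniformity of the bound $F(\xi_n(t)-1)=e\log\xi_n(t)+O(1)$ along the envelope, but the explicit double-sided inequality in Proposition~\ref{prop:crudeas} together with $\xi_n\geq 2^{n-1}$ gives this uniformity directly; no differentiation of the asymptotic \eqref{asxin} is needed, so the extraction of $t_n^0\to t_o$ proceeds purely from the uniform two-sided bounds on the minimizer of a function of the form $n g(t)+O(1)$ with $g$ strictly convex near its minimum.
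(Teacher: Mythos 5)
Your proposal is correct and its first two steps (use the crude estimate $F(x)=e\log x+O(1)$ along the envelope, substitute the expansions \eqref{asxin}--\eqref{asetan}, force $\alpha_\infty(t_n)\to e$, hence accumulation at $t_a$ or $t_b$) coincide with the paper's own derivation in Sec.~\ref{ssec:narrowing}. Where you diverge is in how $t_a$ is ruled out. The paper invokes Proposition~\ref{prop:pmbranches} (the larger of two parameter values over the same abscissa yields the smaller $\eta_n$) to say ``only the neighbourhood of $t_b$ is of interest.'' That argument is quite compressed: applied literally it needs to know that the ``lower'' of the two roots over each abscissa lies near $t_b$ rather than $t_a$, which in turn rests on the cusp parameter $t_n^0$ lying between $t_a$ and $t_b$. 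You instead prove that fact directly: $t_n^0\to t_o$ (obtained cleanly by minimizing $n\log\alpha_\infty(t)+\phi(t)+O(2^{-n})$ with $\log\alpha_\infty$ having a strict unique interior minimum at $t_o$) and then use the inclusion $t_n^\ell,t_n^r\in(t_n^0,2)$ stated at the end of Sec.~\ref{ssec:special-points}, together with $t_a<t_o<t_b$, to drop $t_a$. This makes the elimination of $t_a$ self-contained and arguably more explicit than the paper's appeal to Proposition~\ref{prop:pmbranches}, at the cost of adding a (short, routine) localization lemma for $t_n^0$ that the paper does not state. Both routes are valid; yours buys rigor at the decisive step, the paper's buys brevity.

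One stylistic nit: the paper's text contains a typo, $\log\alpha_\infty(t)/\alpha_\infty(t)=e+o(1)$, which should read $\alpha_\infty(t)/\log\alpha_\infty(t)=e+o(1)$; your formulation via $h(\alpha)=\alpha-e\log\alpha$ with $h(\alpha_\infty(t_n))=O(1/n)$ is equivalent and avoids the slip.
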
 

\subsection{End of proof of Theorem~\ref{thm:main}}
\label{ssec:eop-main}
Denote
$$
 \beta(t)=\frac{\alpha_\infty(t)}{\log\alpha_\infty(t)}
$$
and
$$
\zeta(t)=\psi(t)-\beta(t)\,\phi(t).
%\right)
$$
Due to Proposition~\ref{prop:minint}, in the asymptotic calculation we may choose some compact neighborhood $I\subset(t_o,2)$ 
of the point $t_b$ %(the larger root of the equation$\alpha_\infty(t)=e$) 
as the parameter interval. 

Since $\beta'(t)=(\log\alpha_\infty(t)-1)\alpha_\infty'(t)/(\log\alpha_\infty(t))^2$, the 
function $\beta(t)$ has a unique critical point (the minimum) on $I$, namely, $t=t_b$. By Proposition~\ref{prop:alpha}, $\alpha'(t_b)>0$.

The family of parametric curves $x=\log\xi_n(t)$, $y=\eta_n(t)$ is shown to satisfy all conditions of
Theorem~\ref{thm:paramcurves}. 

Using the asymptotic formulas \eqref{asxin} and \eqref{asetan}
Let us find the coefficients in the final asymptotic formula \eqref{asfabs} in our case.

%in terms of the previously introduced functions. 

The point $t_0$ in Theorem~\ref{thm:paramcurves}
corresponds to $t_b$ in this context.
The terms $r_0(t)$ and $r_1(t)$ of the general formulation
are not present in Eqs.~\eqref{asxin}--\eqref{asetan},
so $\delta(t)\equiv 0$. We have 
$a_0=\beta(t_b)=e$,
$a_1=\zeta(t_b)\approx -0.704656$. The constant $A$ in Theorem~\ref{thm:main} equals $1-\zeta(t_b)$ (remember that $G_n(t-1)=\eta_n(t)-1$ on the original extremal trajectory).

In the formula \eqref{asgenPhi}, $p_0(t_0)=\log\alpha_\infty(t_b)=1$
and $q_0(t_0)=\phi(t_b)\approx 0.6974$;
this is the numeric constant $b$ in Theorem~\ref{thm:main}.

We will derive the coefficients \eqref{coefa2a3} analytically in the concluding two lemmas, showing that $a_3=e/2$ (Lemma~\ref{lem:a3})
and $\zeta'(t_b)=0$ (Lemma~\ref{lem:Dzeta}) hence $a_2=0$.

Thus the proof of Theorem~\ref{thm:main} is complete.
\qed

\begin{lemma}
\label{lem:a3}
There holds the identity
$$
 \beta''(t_b)\left(\frac{(p_0(t_b))^2}{p_0'(t_b)}
\right)^2=e,
$$
where $p_0(t)=\log\alpha_\infty(t)$.
\end{lemma}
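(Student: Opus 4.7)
The plan is to prove the identity by direct computation. Write $a(t) = \alpha_\infty(t)$ and recall that at $t = t_b$ we have $a(t_b) = e$, hence $p_0(t_b) = \log a(t_b) = 1$. Note also that, by Proposition~\ref{prop:alpha}, $t_b$ lies to the right of the minimizer $t_o$, so $a'(t_b) > 0$; this guarantees that the quantities appearing below are well defined.

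First I would compute $p_0'(t_b)$. Since $p_0 = \log a$, we have $p_0'(t) = a'(t)/a(t)$, so
\[
 p_0'(t_b) = \frac{a'(t_b)}{e}, \qquad
\left(\frac{(p_0(t_b))^2}{p_0'(t_b)}\right)^2 = \frac{e^2}{(a'(t_b))^2}.
\]

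Next I would compute $\beta''(t_b)$. Differentiating $\beta = a/\log a$ once gives
\[
 \beta'(t) = \frac{a'(t)\,(\log a(t) - 1)}{(\log a(t))^2}.
\]
At $t = t_b$ the numerator vanishes because $\log a(t_b) = 1$, recovering the fact that $t_b$ is critical for $\beta$. To get $\beta''(t_b)$, write $\beta' = u/v$ with $u(t) = a'(\log a - 1)$ and $v(t) = (\log a)^2$; then $\beta'' = (u'v - uv')/v^2$. At $t_b$ one has $u(t_b) = 0$, $v(t_b) = 1$, and
\[
 u'(t_b) = a''(t_b)\cdot 0 + \frac{(a'(t_b))^2}{a(t_b)} = \frac{(a'(t_b))^2}{e},
\]
so that $\beta''(t_b) = u'(t_b) = (a'(t_b))^2/e$.

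Multiplying the two results gives the asserted identity:
\[
 \beta''(t_b)\left(\frac{(p_0(t_b))^2}{p_0'(t_b)}\right)^2
= \frac{(a'(t_b))^2}{e}\cdot\frac{e^2}{(a'(t_b))^2} = e,
\]
and this completes the argument. There is no real obstacle here: the statement is a one-variable calculus identity that hinges on two clean facts at $t_b$, namely $\log a(t_b) = 1$ and $a'(t_b) \neq 0$. The only thing worth emphasizing is the appearance of $u(t_b) = 0$, which is what makes the $uv'$ term drop out and leaves the clean expression $\beta''(t_b) = (a'(t_b))^2/e$; this cancellation is the reason the constant $a_3 = \beta''(t_b)/2 \cdot ((p_0(t_b))^2/p_0'(t_b))^2 = e/2$ depends neither on the (hard-to-evaluate) derivative $a'(t_b)$ nor on $a''(t_b)$.
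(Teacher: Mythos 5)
Your proof is correct and follows essentially the same route as the paper: both compute $\beta''(t_b)$ directly by exploiting the vanishing of a factor at $t_b$ (in the paper, the factor $p_0-1$; in your version, $u(t_b)=a'(t_b)(\log a(t_b)-1)=0$), and both reduce to the clean identity $\beta''(t_b)=e\,(p_0'(t_b))^2$. The paper parametrizes by $p_0$ via $\beta=e^{p_0}/p_0$, while you parametrize by $a=\alpha_\infty$ via $\beta=a/\log a$; the computations are the same up to this change of variable.
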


\begin{proof} Since $\beta=e^{p_0}/p_0$,
we have $\beta'=p_0'\cdot e^{p_0}/p_0^2\cdot (p_0-1)$.
To evaluate $\beta''(t_b)$, it suffices to differentiate
the last factor (which vanishes at $t_b$):
$$
\beta''(t_b)=\left.\frac{p_0'(t)\cdot e^{p_0(t)}}{p_0^2(t)}\cdot p_0'(t)\right|_{t=t_b}.
$$
Therefore 
$$
\beta''(t_b)\left(\frac{(p_0(t_b))^2}{p_0'(t_b)}
\right)^2=e^{p_0(t_b)}p_0^2(t_b)=e\cdot 1^2=e.
\eqno\qedhere
$$
\end{proof}

\begin{lemma}
\label{lem:Dzeta}
There holds the identity
$\zeta'(t_b)=0$, that is, $\psi'(t_b)-e\phi'(t_b)=0$.
\end{lemma}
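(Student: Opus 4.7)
The approach is to reduce the claim $\psi'(t_b)=e\phi'(t_b)$ to the parametric identity $\eta_n'(t)=\alpha_n(t)\,\xi_n'(t)/\xi_n(t)$ from Proposition~\ref{prop:xieta-basic}(d), combined with differentiated forms of the exact identities underlying Proposition~\ref{prop:asphipsi}. The proof of that proposition in fact gives
\[
\log\xi_n(t)-n\log\alpha_\infty(t)-\phi(t)=-\sum_{j=n+1}^\infty\log\!\left(1-\frac{\delta_j(t)}{\alpha_\infty(t)}\right),
\]
\[
\eta_n(t)-n\alpha_\infty(t)-\psi(t)=\sum_{j=n}^\infty\delta_j(t).
\]

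First I plan to differentiate both identities in $t$ and verify that the differentiated tails still tend to $0$ uniformly in some compact neighborhood $I\subset(t_o,2)$ of $t_b$. Since $\xi_k'(t)/\xi_k(t)=\sum_{j=1}^k\alpha_j'(t)/\alpha_j(t)=O(k)$ uniformly in $I$ (each quotient $\alpha_j'/\alpha_j$ being bounded thanks to $\alpha_j\geq 2$ from Proposition~\ref{prop:xieta-basic}(b)), and $\xi_k(t)\geq 2^{k-1}$, one gets $\delta_j'(t)=-\sum_{k\geq j}\xi_k'(t)/\xi_k^2(t)=O(j\,2^{-j})$ uniformly in $I$. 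This validates termwise differentiation of both tails and yields, uniformly in $I$,
\[
\frac{\xi_n'(t)}{\xi_n(t)}=n\,\frac{\alpha_\infty'(t)}{\alpha_\infty(t)}+\phi'(t)+o(1),\qquad
\eta_n'(t)=n\,\alpha_\infty'(t)+\psi'(t)+o(1).
\]

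Next, evaluate at $t=t_b$, multiply the first asymptotic by $e=\alpha_\infty(t_b)$, and subtract from the second; the $n$-linear terms cancel and I obtain
\[
\psi'(t_b)-e\,\phi'(t_b)=\eta_n'(t_b)-e\,\frac{\xi_n'(t_b)}{\xi_n(t_b)}+o(1).
\]
Invoking $\eta_n'(t)=\alpha_n(t)\,\xi_n'(t)/\xi_n(t)$ and $\alpha_n(t_b)-e=-\delta_n(t_b)$, the right-hand side rewrites as
\[
-\delta_n(t_b)\cdot\frac{\xi_n'(t_b)}{\xi_n(t_b)}+o(1).
\]
By \eqref{asxin}, $\xi_n(t_b)$ grows like $e^{n+\phi(t_b)}$, so $\delta_n(t_b)=O(e^{-n})$, whereas $\xi_n'(t_b)/\xi_n(t_b)=O(n)$ from the first step. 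Hence the product is $O(n\,e^{-n})=o(1)$. The left-hand side is independent of $n$, so it must vanish. The conclusion $\zeta'(t_b)=0$ now follows by writing $\zeta'(t_b)=\psi'(t_b)-\beta'(t_b)\phi(t_b)-\beta(t_b)\phi'(t_b)=\psi'(t_b)-e\,\phi'(t_b)$, where $\beta(t_b)=e$ and $\beta'(t_b)=0$ as recorded in the proof of Lemma~\ref{lem:a3}.

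The only delicate point is the uniform decay estimate $\delta_j'(t)=O(j\,2^{-j})$ on $I$, needed to pass the derivative through the tail series. This reduces to the linear bound $\xi_k'(t)/\xi_k(t)=O(k)$, which is immediate from $\alpha_k\geq 2$ and the bounded uniformity of $\alpha_j'/\alpha_j$ on $I$ (both follow from the recurrences \eqref{recrelalxi} together with the analyticity of $\alpha_\infty$ asserted in Proposition~\ref{prop:alpha}). Everything else is algebraic manipulation driven by the two special relations at $t_b$, namely $\alpha_\infty(t_b)=e$ and $\beta'(t_b)=0$.
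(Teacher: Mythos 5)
Your proof is correct, and it follows a genuinely different route from the paper's. The paper expands $\psi'(t_b)-e\phi'(t_b)$ directly as an infinite series, uses the pointwise relations $\alpha_\infty(t_b)=e$ and $\alpha_j(t_b)-e=-\delta_j(t_b)$ to simplify it to $1-\alpha_\infty'(t_b)-\sum_j\delta_j\alpha_j'/\alpha_j$ evaluated at $t_b$, and then proves the general algebraic identity $\sum_{j=1}^\infty\delta_j\,\alpha_j'/\alpha_j=1-\alpha_\infty'$ by Abel summation by parts, exploiting $\delta_{j+1}-\delta_j=-\xi_j^{-1}$. You instead differentiate the two exact tail identities behind Proposition~\ref{prop:asphipsi}, evaluate at $t_b$, and cancel the $n$-linear terms to express the constant $\psi'(t_b)-e\phi'(t_b)$ as $\eta_n'(t_b)-e\,\xi_n'(t_b)/\xi_n(t_b)+o(1)$; then the exact dynamical identity $\eta_n'=\xi_n'/\xi_{n-1}=\alpha_n\,\xi_n'/\xi_n$ from Proposition~\ref{prop:xieta-basic}(d) turns this into $-\delta_n(t_b)\,\xi_n'(t_b)/\xi_n(t_b)+o(1)=o(1)$, and an $n$-independent quantity that is $o(1)$ must be zero. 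Both arguments are valid. The paper's is purely algebraic and self-contained once the series representations of $\phi'$ and $\psi'$ are written out; yours is more analytic and leans on the already-established tangency relation $d\eta_n/d\xi_n=\xi_{n-1}^{-1}$, making the geometric origin of the identity transparent. The cost on your side is the need to justify termwise differentiation of the tails; your bound $\delta_j'=O(j\,2^{-j})$ is right, but note that the boundedness of $\alpha_j'$ (not merely $\alpha_j\ge 2$) is the real content there, and it comes from the uniform convergence of $\alpha_j$ on a complex neighborhood of $[1,2]$ plus Cauchy estimates, which is exactly the machinery of Appendix~\ref{app:alpha}. Your parenthetical gesture at this is adequate but could be made explicit.
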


\begin{proof}
Writing $\psi'=-\sum_{j=0}^\infty (\alpha'_\infty-\alpha'_j)$ and $\phi'=\sum_{j=1}^\infty
(\alpha'_j/\alpha_j-\alpha'_\infty/\alpha_\infty)$,
we see that
$$
\left[\psi'-e\phi'\right]_{t_b}
=
1-\alpha'_\infty(t_b)-\sum_{j=1}^\infty\left[
\alpha'_\infty\left(1-\frac{e}{\alpha_\infty}
\right)-\alpha'_j\left(1-\frac{e}{\alpha_j}
\right)\right]_{t_b}
$$
In view of the identities $\alpha_\infty(t_b)-e=0$
and $\alpha_j(t_b)-e=-\delta_j(t_b)$, the right-hand side
simplifies to 
$$
 1-\alpha'_\infty(t_b)-\sum_{j=1}^\infty\delta_j(t_b)\,\frac{\alpha'_j(t_b)}{\alpha_j(t_b)}.
$$
We will prove that in general
\begin{equation}
\label{alpha_identity}
 \sum_{j=1}^\infty \delta_j(t) \frac{\alpha_j'(t)}{\alpha_j(t)}=1-\alpha_\infty'(t).
\end{equation}

By the definition \eqref{defalpha} of $\alpha_j$,
$$
 \frac{\alpha_j'}{\alpha_j}=
\frac{\xi_j'}{\xi_{j}}-\frac{\xi_{j-1}}{\xi_{j-1}}.
$$
Applying Abel's summation-by-parts formula to the partial sum in the l.h.s. of \eqref{alpha_identity}, we get
$$
 \sum_{j=1}^N \delta_j \frac{\alpha_j'}{\alpha_j}=
\delta_N\frac{\xi_N'}{\xi_N}-\delta_1\frac{\xi_0'}{\xi_0}
+\sum_{j=1}^{N-1} (\delta_j-\delta_{j+1}) \frac{\xi_j'}{\xi_{j}}.
$$

We have $\xi_0=1$ and $\xi'_0=0$; $\xi'_N/\xi_N=O(1)$
and $\delta_N=o(1)$ as $N\to\infty$, so the boundary terms are $o(1)$.
Now, 
$\delta_{j+1}-\delta_j=\alpha_j-\alpha_{j+1}=-\xi_j^{-1}$. Therefore
$$
\sum_{j=1}^N \delta_j \frac{\alpha_j'}{\alpha_j}
=\sum_{j=1}^{N-1}(\xi_j^{-1})'+o(1)
=\alpha'_1-\alpha'_N+o(1),
$$
and the limit is $\alpha'_1-\alpha'_\infty=1-\alpha'_\infty$.
\end{proof}

\iffalse
\begin{corollary}
The critical index $\nu(x)$ satisfies the asymptotic estimate
$$
 e=\min_{1\leq t\leq 2}\log\alpha_\infty(t)\leq\liminf_{x\to\infty}\frac{\log x}{\nu(x)}
\leq\limsup_{x\to\infty}\frac{\log x}{\nu(x)} 
\leq \max_{1\leq t\leq 2}\log\alpha_\infty(t).
$$
\end{corollary}

\begin{proof}
We combine Proposition~\ref{prop:tau01} with the asymptotics of $\log\xi_n(t)$ from Lemma~\ref{lem:asphipsi}.

[Need details]

(f) [Not quite; it's too early here. Put where discuss the asymptotics of the minimizing critical point.] 

A refinement of the functional equation 
\eqref{fe1} reads
\beq{fe2}
 f(x)=\min_{0<y\leq\Xb_{\nu(x)-1}}\left(f(y)+\frac{x}{y+1}\right).
\eeq
\end{proof}
\fi

%-------------------------------------------------

\section{Constants \texorpdfstring{$A(p)$}{A(p)}: existence and computation}
\label{sec:Ap}

%\begin{proof}[Sketch of proof of Theorem~\ref{thm:genpar}]

\subsection{Proof of Theorem~\ref{thm:genpar}} 

(a) For any $n$-tuple $\mathbf{t}$, the function
$p\mapsto F^{(p)}(\mathbf{t})$ is decreasing. Hence,
if the asymptotics \eqref{Fpasym} takes place, the function
$A(p)$ is at least nondecreasing.

We will consider the cases $p>1$ and $p<1$ separately 
to justify the formula \eqref{Fpasym} and to deduce that $A(p)$ is strictly increasing. That way, a proof of (a) will be finished.

Let us note an identity that will be useful in both cases
$p>1$ and $p<1$.

For any $q>0$, putting $\tilde t_j=t_j/q$, $j=1,\dots,n$, we get
$t_j/(t_{j-1}+p)=\tilde t_j/(\tilde t_{j-1}+p/q)$.
Hence
\begin{align}
S^{(p)}_n(t_1,\dots,t_n)
&=t_1+\frac{\tilde{t}_2}{\tilde t_1+p/q}+\dots+\frac{\tilde t_n}{\tilde{t}_{n-1}+p/q}
\label{Sp-transform1}
\\[1ex]
&=t_1-\tilde{t}_1+S^{(p/q)}_n(\tilde{t}_1,\dots,\tilde {t}_n).
\label{Sp-transform}
\end{align}

\medskip
(b) Case $p>1$. Let us take $q=p$ in \eqref{Sp-transform}.
We get
$$
S^{(p)}_n(t_1,\dots,t_n)
=t_1\left(1-\frac{1}{p}\right)
+S_n(\tilde t_1,\dots,\tilde t_n)\geq 
S_n(\tilde t_1,\dots,\tilde t_n).
$$
Hence $F^{(p)}_n(x)\geq F_n(x/p)$ and $F^{(p)}(x)\geq F(x/p)$.

On the other hand, taking again $\tilde t_j=t_j/p$
and setting $t_1=0$, we obtain  
$$
\left. S^{(p)}_n(t_1,\dots,t_n)
\right|_{t_1=0}=S_{n-1}(\tilde t_1,\dots,\tilde t_{n-1}).
$$
Hence $F^{(p)}_n(x)\leq F_{n-1}(x/p)$, so $F^{(p)}(x)\leq F(x/p)$.

The identity \eqref{Fpgt1} is thus proved. The asymptotics and the expression for $A(p)$ follow
from Theorem~\ref{thm:main}.

\medskip
(c) Case $0<p<1$. Here we give only a sketch of proof, using heuristics at some steps.

The asymptotics \eqref{Fpasym} can be derived in the same way as we did it for $p=1$, by obtaining parametric
description of the curves $y=F^{(p)}_n(x)$.

To derive the functional equation, take $q=t_1+p$ in Eq.~\eqref{Sp-transform1}. Then $\tilde t_1+p/q=1$,
hence
$$
 S^{(p)}_n(t_1,\dots,t_n)=t_1+S^{(p/q)}_{n-1}(\tilde t_2,\dots,\tilde t_n).
$$
It follows that
% Check (non)strictness of inequalities in constraints!
$$
 F^{(p)}_n(x)=\inf_{q\geq p} \left(q-p+F^{(p/q)}_{n-1}(x/q)\right).
$$
Consequently,
$$
 F^{(p)}(x)=\inf_{q\geq p} \left(q-p+F^{(p/q)}(x/q)\right).
$$
From \eqref{Sp-transform} we see that 
$F^{(p)}(x)\leq F^{(p)}(x/q)$ for $q\leq 1$. As $q-p\geq 0$, the minimization range can be reduced to $q\geq 1$.
%The lower bound does not occur at $q=1$, since $1-p>0$.  

The next step needs a justification that we omit. Instead of the functions $F^{(\dots)}(\dots)$ we substitute their asymptotics
(and change the letter $q$ into $u$).
The result is 
$$
 e\log x-A(p)+o(1)=\inf_{u\geq 1} \left(u-p+e\log\frac{x}{u}-A\left(\frac{p}{u}\right)+o(1)\right).
$$
After cancellation of the $e\log x$ terms the remaining leading terms are constants (with respect to $x$). We obtain the equation \eqref{feq-Ap} with a weaker constraint: $u\geq 1$ rather that $1\leq u\leq e$.
However, the values $u>e$ can be excluded from the maximization range since the function $A(p/u)$ is nonincreasing w.r.t. $u$ and the function $e\log u-u$ strictly decreases for $u>e$.

\smallskip
Finally, let us explain the asymptotics of $A(p)$
as $p\to +0$. 
Choosing $u=e$ in the right-hand side of \eqref{feq-Ap} we
get $A(p)\geq A(p/e)+p$.
Therefore
$A(p)\geq A(pe^{-k})+p\sum_{j=0}^{k-1} e^{-j} $ for any $k\in\NN$. By monotonicity of $A(\cdot)$ we get
$A(p)\geq p(1-e^{-1})^{-1}$. Putting $k=\liminf_{p\to +0} A(p)/p$, we see that $k\geq k_0=(1-e^{-1})^{-1}$. 

Skipping a necessary justification, we assume that the $\lim_{p\to +0} A(p)/p$ exists; its value is thus $k$.
Let us show that $k=k_0$. 

Take small $\eps>0$. For sufficiently small $p$ 
and any $u\in[1,e]$ we have
$A(p/u)<(k+\eps)p/u$, so 
$$
 A(p)\leq \max_{1\leq u\leq e}\left((k+\eps)\frac{p}{u}+e\log u-u+p\right).
$$
Differentiating, we see that the maximum is attained at the point $u=u^+$ which is the larger root (close to $e$) of the quadratic equation
$$
 u^2-eu+(k+\eps)p=0.
$$
The smaller root is $u_-=e-u_+=(k+\eps)p/u_+\sim (k+\eps)p/e$. 
Since $|e\log u-u|=O(|e-u|^2)$ as $u\to e$, we have
$$ 
A(p) \leq (k+\eps)\frac{p}{u_+}+O(p^2)+p=\left(\frac{k+\eps}{e}+1\right)p+O(p^2).
$$
Therefore 
$$
 k=\lim_{p\to +0}\frac{A(p)}{p}\leq \frac{k+\eps}{e}+1,
$$
so $k\leq e/(e-1-\eps)$. Making $\eps\to 0$, we obtain
$k\leq k_0$, hence $k=k_0$.
%\end{proof}

\subsection{Tabulation of the function \texorpdfstring{$A(p)$}{A(p)}}
%\subsection{Recurrence relation}
Let us rewrite the functional equation
\eqref{feq-Ap} in the form
$$
 A(p)=\max_{p/e\leq s\leq p} \left(A(s)+\theta\left(\frac{p}{s}\right)\right)+p ,
$$
where
$$
\theta(t)=e\log t-t.
$$
Assuming that $A(\cdot)$ is differentiable, we have the
condition of extremum:
$$
 A'(s_*)-\frac{p}{s_*^2}\cdot\theta'\left(\frac{p}{s_*}\right)=0.
$$
Equivalently,
$$
  s_*^2\,A'(s_*)-es_*+p=0.
$$

We have
$$
A'(p)=1+\frac{1}{s_*}\theta' \left(\frac{p}{s_*}\right)
=1+\frac{e}{p}-\frac{1}{s_*}.
$$
%Here $p$ can be expressed in terms of $s_*$ and $A'(s_*)$,
%which we exploit shortly. 

Consider the triples $(s_*,A'(s_*),A(s_*))$
and $(p,A'(p),A(p))$ as consequtive points of the iteration process:
\begin{align}
(x_{n-1},y_{n-1},z_{n-1})&= (s_*,A'(s_*),A(s_*)),
\nonumber
\\
(x_n,y_n,z_n)&=(p,A'(p),A(p)).
\nonumber
\end{align}  
The equations defining the recurrence are
$$
 \ba{l}
\dst
 x_n=ex_{n-1}-x_{n-1}^2 y_{n-1},
\\[1ex]
\dst 
y_n=1-\frac{1}{x_{n-1}}+\frac{e}{x_n},
\\[2ex]
\dst
z_n=z_{n-1}+x_n+\theta\left(e-x_{n-1}y_{n-1}\right).
\ea
$$
Starting with some arbitrary small $x_1$ and setting
$y_1=k_0$, $z_1=k_0 x_1$, we can continue iterations
until $x_n$ exceeds $1$ for the first time. This way we can tabulate the function $A(p)$, $0<p<1$.

The actual calculation of $A(1)$ performed by this method
demonstrated an %excellent 
agreement with the %previously found 
value stated in Theorem~\ref{thm:main}, which was originally found using Theorem~\ref{thm:paramcurves}
and the functions \eqref{phi}, \eqref{psi}.
%-------------------------------------------------------

%------------------------------------------------------
\appndx{Proof of Theorem~\ref{thm:paramcurves}}{app:paramcurves}

%\begin{proof}[Proof of Theorem~\ref{thm:paramcurves}]
Note first of all that a change of order $O(n^{-2})$ in the asymptotics of $\xi_n(t)$ and $\eta_n(t)$ has an effect of order $O(u^{-2})$ on the value of $f(u)$. In particular, the regularity properties of the remainder terms (continuity, differentiability etc.) are irrelevant. Hence we may, and  will, assume that these terms are absent. It will be helpful since now the parametric equations become rational functions of $n$ and we can interpolate  to non-integer values of $n$. %cf.~Eq.~\eqref{etanu} below.

Whenever we allow non-integer values of $n$, we will use the letter $\nu$ instead.
Define 
$$
\tilde f(u)= \min_{(\nu,t):\,\xi_{\nu}(t)=u}\eta_{\nu}(t).
$$
Here the constraint is relaxed compared to that
in the definition of $f(u)$.

%Let us forget temporarily that $n$ must be integer. 
Solving the equation $u=\xi_\nu(t)$ for $\nu$, we find
that any solution (uniqueness is neither claimed nor required)
$
\nu=\nu(u,t)
$ 
has the asymptotics 
\beq{n-ut}
\nu(u,t)=\frac{u-q_0(t)-r_0(t)p_0(t) u^{-1}}{p_0(t)}+O(u^{-2})\
\quad\text{as $u\to\infty$}
\eeq
uniformly in $t\in I$. 

Put
$$
 \zeta(t)=q_1(t)-\beta(t)q_0(t).
$$
and
$$
 w(u,t)=\beta(t)u+\zeta(t)+\delta(t)u^{-1}.
$$
From \eqref{n-ut} it is easily follows that
$$
 \tilde f(u)=v_1+O(u^{-2}),
$$
where
$$
 v_1=\min_t w(u,t).
$$
Let $t_1=t_1(u)$ be the point of minimum of $w(u,t)$% 
%(for  a fixed $u$)
, so 
$v_1=w(u,t_1)$.

Due to the assumptions (i) and (iii) of Theorem, we have
$t_1=t_0+\tau$ with $\tau=O(u^{-1})$ as $u\to\infty$.
The extremal point equation $w'_t(u,t)=0$ yields
$$
\tau=-\frac{\zeta'(t_0)}{\beta''(t_0)} u^{-1}+O(u^{-2}).
$$
Substituting this value to the equation $v_1=w(u,t_0+\tau)$
we obtain
\begin{align} 
v_1&=\left(b_0+\frac{b_2}{2}\tau^2\right)u+\left(\zeta(t_0)
+\zeta'(t_0)\tau\right)+\delta(t_0)u^{-1}+O(u^{-2})
\nonumber
\\[1ex]
&=b_0u+\zeta(t_0)+\left(-\frac{(\zeta'(t_0))^2}{2b_2}+\delta(t_0)\right)u^{-1}+O(u^{-2}).
\nonumber
\\[1ex]
&=a_0u+a_1+a_2 u^{-1}+O(u^{-2}),
\nonumber
\end{align}
where the coefficients $a_0$, $a_1$, $a_2$ are those in 
Eqs.~\eqref{coefa0a1}, \eqref{coefa2a3}.

Let us now determine a correction needed to satisfy the integrality condition $n\in\NN$.
Put $t=t_1+\tau_1$.
We want to have $\nu(u,t_1+\tau_1)\in\NN$. It is easy to see that 
a change of order $O(u^{-1})$ in $t$, equivalently in $\tau_1$, causes a change of order $O(1)$ in $\nu(u,t)$.
Therefore, there is a discrete set of admissible values of $\tau_1$ separated by distances of order $O(u^{-1})$. 

We have
$$
 w(u,t)-v_1=\frac{1}{2}\left.\frac{\partial^2 w}{\partial t^2}\right|_{t=t_1}\tau_1^2+O(u^{-2})=\frac{b_2}{2}u\tau_1^2
+O(u^{-2}),
$$
because $w'_t|_{t=t_1}=0$ and 
$w''_{tt}|_{t=t_1}=b_2+O(u^{-1})$.
The minimum value of the right-hand side, %in \eqref{wut},
up to an error $O(u^{-2})$, corresponds to the minimum value of $|\tau_1|$, up to an error $O(u^{-2})$, in the above mentioned discrete set.

From Eq.~\eqref{n-ut} we infer (since $\nu'_t(u,t)=-u(1/p_0(t))'+O(1)$): 
$$
 n=\nu(u,t_1)-\frac{u p_0'(t_1)}{p_0(t_1)^2}\tau_1
 +O(u^{-1}).
$$
Hence the minimum admissible value of $|\tau_1|$ is
$$
 |\tau_1|=\ab{\nu(u,t_1)+O(u^{-1})}\cdot \frac{p_0(t_1)^2 }{ p_0'(t_1)}u^{-1}.
$$
Due to the continuity of the function $\ab{\cdot}$
and the estimate $t_1-t_0=O(u^{-1})$ we can write
$$
|\tau_1|=\ab{\frac{u-q_0(t_0)}{p_0(t_0)}}\cdot \frac{p_0(t_0)^2 }{ p_0'(t_0)} u^{-1}
+O(u^{-2}).
$$
The final substitution into $f(u)=v_1+\frac{b_2}{2}u\tau_1^2+O(u^{-2})$ leads to the 
complete asymptotic formula \eqref{asfabs} and the expression for $a_3$ as in \eqref{coefa2a3}.
%\end{proof}

\appndx{Proof of Proposition~\ref{prop:alpha}}{app:alpha}

%\begin{proof}[Proof of Theorem~\ref{thm:prop-alpha}]
The proof is based on a series of lemmas that are placed at the end of this Appendix.
Some of the qualitative results depend on numerical evaluations. Speaking on the matter of rigor, one might refer to interval analysis; however, the accuracy of approximation is never a critical issue and we work with rational functions of relatively low complexity. So
% at that stage 
when needed
we just refer to 
the computed data truncated to low digits without being too pedantic.

%The high-precision evaluations \eqref{num-alpha} 
%need a bit more care: we need to examine numerical stability of the recurrent procedure since 
%the required number of iterations is about 40. We say more about it in Appendix~A.

We will use the notation \eqref{deltan} and
\begin{equation}
\label{deltamn}
\delta_{n,m}(t)=\sum_{j=n}^{m-1} \frac{1}{\xi_j(t)}=\alpha_m(t)-\alpha_{n}(t)
\quad (n<m).
\end{equation}

%\smallskip
1. In Lemmas~\ref{lem:est-al-xi} through \ref{lem:alphaD}
we establish bounds, similar in nature to the simple estimates $2\leq\alpha_n(t)<3$ ($t\geq 1$) of Proposition~\ref{prop:xieta-basic}(b), here --- for $\alpha_n(z)$ and $\xi_n(z)$ with $z\in\Gamma$,
where $\Gamma$ is a certain contour in the complex 
plane whose interior contains the segment $[1,2]$.
It follows that the sequence of analytic functions
$\alpha_n(z)$ converges uniformly on $\Gamma$ and therefore its limit $\alpha_\infty(z)$ is analytic in the interior of $\Gamma$. In particular, $\alpha_\infty(t)$ is real analytic for $t\in[1,2]$.

\smallskip
2. We claim that $\alpha_n''(t)>0$ in $[1,2]$ for $n\in\NN\cup\{\infty\}$. Indeed,
$\alpha_1''(t)=2t^{-3}\geq 1/4$. For $n=2,3,4$ the claimed inequality is acsertained by the numerical minimization of the corresponding rational function.

Put
$$
 A=\min_{t\in[1,2]} t^3\alpha''_4(t).
$$
We have the numerical fact
\beq{estddalpha4}
 A>2.32.
\eeq
Now, 
$$
 \alpha_n(t)''=\alpha_4''(t)+\delta_{4,n}''(t)
$$
By Cauchy's formula, 
$$
 \delta_{4,n}''(t)=\frac{2}{2\pi i}\int_{\Gamma}
\frac{\delta_{4,n}(z)}{(z-t)^3}\,dz.
$$
Due to the choice of the contour $\Gamma$ we have
%(see Lemma~\ref{lem:}) 
$|z-t|\geq t/\sqrt{2}$ for any $t\in[1,2]$ and $z\in\Gamma$.
Therefore
$$
 |\delta_{4,n}''(t)|\leq \frac{2^{3/2}}{\pi t^3}\int_{\Gamma}|\delta_{4,n}(z)|\,|dz|.
$$
Put
\beq{deltamax}
\delta_n^*(z)=\sum_{j=n}^\infty\frac{1}{|\xi_j(z)|}.
\eeq
Clearly, $|\delta_{4,n}(z)|<\delta_4^*(z)$ for all $n>4$ (see \eqref{deltan}).

Define
\beq{deltaB}
 B=\int_{\Gamma}\delta_4^*(z)\,|dz|.
\eeq
For any $t\in[1,2]$ we have
$$
\alpha''_4(t)-|\delta''_4(t)|>\alpha''_4(t)
-\frac{2^{3/2}}{\pi t^3}B\geq \frac{1}{t^3}
\left(A-\frac{2^{3/2}}{\pi}B\right)
$$
Numerically, $B<2.48$ by Lemma~\ref{lem:estB}. Hence
$$
 A-\frac{2^{3/2}}{\pi}B>2.32-\frac{2^{3/2}}{\pi}\cdot 2.48> 0.08>0.
$$
The inequality $\alpha_n''(t)>0$ is proved; the limit case $\alpha_\infty''(t)>0$ also follows.

\smallskip
3. We have, numerically,
$$
 \alpha_4'(1)<-0.7,
\qquad
\alpha_4'(2)>0.48.
$$
Now, for $t\in[1,2]$,
$$
 |\delta_{4,n}'(t)|=\frac{1}{2\pi}\left|\int_{\Gamma}
\frac{\delta_{4,n}(z)}{(z-t)^2}\,dz\right|
<\frac{2B}{2\pi t^2}.
$$
Using again the estimate $B<2.48$ of Lemma~\ref{lem:estB}, we get 
and $|\delta_{4,n}'(2)|<0.2$, hence $\alpha_{n}'(2)=\alpha_{4}'(2)+\delta_{4,n}'(2)>0.28>0$
and $\alpha_\infty'(2)>0$. 

The same method yields $|\delta_{4,n}'(1)|<0.79$,
which is not enough to claim that
$\alpha_{n}'(1)<0$.
%, since $0.7<0.79$. 
We refer to the estimate for
\beq{B1}
 B_1=\int_{\Gamma}
\frac{\delta^*_{4}(z)}{|z-1|^2}\,|dz|
\eeq 
provided by the same Lemma~\ref{lem:estB}:
$B_1<4.2$. We get $|\delta'_{4,n}(1)|<B_1/(2\pi)<0.67$. Since $0.67<0.7$,
the estimate $\delta'_{4,n}(1)<0$ follows.

Thus, for any $n\geq 4$, the function $\delta'_{4,n}(t)$ is increasing in $[1,2]$ and changes sign. Hence it has the unique root $t_{n,o}$.
The same is true for $n=\infty$. 

\smallskip
4. We have $\alpha_2(2)=2+1/2+1/5=2.7$; $\xi_2(2)=27/2$; $\alpha_3(2)=27/10+2/27=749/270>2.774$, hence $\alpha_n(2)>e$ for all $n\geq 3$. 

\smallskip
5. To prove the inequality $\alpha(t_o)<e$ we use the evaluations
$$
\ba{l}
\xi_2(3/2)=193/24>8. %\\
\alpha_3(3/2)\in (2.5,2.6).  
\ea
$$
It follows by \eqref{recrelalxi} that
$\alpha_\infty(t_o)\leq \alpha_\infty(3/2)<2.6+1/12<e$. 

The proof of Proposition~\ref{prop:alpha} is complete.
\qed

%\medskip
%On the stability of recurrent procedure
%
%...........

%%
\begin{figure}[bht]
\begin{minipage}{0.95\textwidth}
{\maplecodestyle
\begin{lstlisting}
 > AlphaXi:=proc(t,n)    # Procedure to compute [alpha_n(t),xi_n(t)]
   local a,x,i;  a:=t; x:=t;
   for i from 1 to n do
      a:=a+1./x; 
      x:=x*a;
   od; 
   return [a,x]; end proc:
 > AlphaXi(t,1);  # test: symbolic computation of [alpha_1(t),xi_1(t)] 
\end{lstlisting}
}
\centerline{\mapleoutpstyle $
 \left[t+\frac{1}{t}, t\left(t+\frac{1}{t}\right)\right]
$}
{\maplecodestyle
\begin{lstlisting}
 > Optimization[Minimize](abs(AlphaXi(3.5*exp(I*phi),4)[1]),phi=0..Pi/4);
\end{lstlisting}
}
\centerline{\mapleoutpstyle $
 [3.43600269664558, [phi = .785398163397448]]
$}
{\maplecodestyle
\begin{lstlisting}
 > Optimization[Minimize](abs(AlphaXi(3.5*exp(I*phi),4)[2]),phi=0..Pi/4);
\end{lstlisting}
}
\centerline{\mapleoutpstyle $
 [503.687917872391, [phi = .785398163397448]]
$}
{\maplecodestyle
\begin{lstlisting}
 > sqrt(3.43600269664558); # min ((alpha_4(z))^(1/2)), z in Gamma_3
\end{lstlisting}
}
\centerline{\mapleoutpstyle $
 1.853645785
$}
{\maplecodestyle
\begin{lstlisting}
 > 1/sqrt(503.687917872391); # max((xi_4(z))^(-1/2)), z in Gamma_3
\end{lstlisting}
}
\centerline{\mapleoutpstyle $
 0.04455733765
$}
\end{minipage}

\begin{picture}(380,5)
\put(0,0){\framebox(380,250){}}
\end{picture}
\caption{A fragment of a computer algebra system (Maple) worksheet used in the computations for Proposition~\ref{prop:alpha} and Lemma~\ref{lem:raw-est-contour}}
\label{fig:CAScode}
\end{figure}

\begin{figure}[thb]
\begin{center}
\definecolor{wwwwww}{rgb}{0.4,0.4,0.4}
\definecolor{zzttqq}{rgb}{0.6,0.2,0}
\definecolor{xdxdff}{rgb}{0.49019607843137253,0.49019607843137253,1}
\begin{tikzpicture}[line cap=round,line join=round,>=triangle 45,x=0.15cm,y=0.15cm]
\clip(-5,-20) rectangle (33,20);
% segment [1,2]:
\draw [line width=2pt,color=zzttqq] (8,0)-- (16,0);
% \Gamma_1:
\draw [shift={(0.0730113,0.085204)},line width=1pt]  plot[domain=-0.84244:0.781975,variable=\t]({1*1.928890898211995*cos(\t r)+0*1.928890898211995*sin(\t r)},{0*1.928890898211995*cos(\t r)+1*1.928890898211995*sin(\t r)});
%
% \Gamma_3:
\draw [shift={(-0.1149326,0.026446)},line width=1pt]  plot[domain=-0.782187:0.7828295,variable=\t]({1*28.11494503749307*cos(\t r)+0*28.11494503749307*sin(\t r)},{0*28.11494503749307*cos(\t r)+1*28.11494503749307*sin(\t r)});
%
% \Gamma_2:
\draw [line width=1pt] (1.42,-1.42) -- (6.80,-6.80);
\draw [->,line width=1pt] (13.02,-13.02) -- (19.82,-19.82);
%
% \Gamma_4:
\draw [->,line width=1pt] (19.82,19.82) -- (1.42,1.42);
%
% \Gamma_{20}:
\draw [line width=2pt,dashed] (6.80,-6.80) -- (13.02,-13.02);
%
% Segment [t,z]
\draw [line width=1.2pt,dash pattern=on 2pt off 2pt,color=wwwwww] (7.82,7.84)-- (11.56,0);
%
% \arrows:
%\draw [->,line width=0.3pt] (1.98,0.2) -- (1.98,-0.65);
\draw [->,line width=1pt] (27.98,-0.5) -- (27.98,2);
%-------
\draw [fill=xdxdff] (1.356,-1.35) circle (0.5pt);
\draw (-0.7,0.45) node {$\Gamma_1$};
\draw [fill=xdxdff] (1.44,1.44) circle (0.5pt);
\draw [fill=zzttqq] (8,0) circle (2.5pt);
\draw (8,-2.3) node {$1$};
\draw [fill=zzttqq] (16,0) circle (2.5pt);
\draw (16,-2.3) node {$2$};
\draw [fill=xdxdff] (19.83,-19.79) circle (0.5pt);
\draw [fill=xdxdff] (19.82,19.86) circle (0.5pt);
\draw (14,-17.5) node {$\Gamma_{2}$};
\draw (12,-8) node {$\Gamma_{20}$};
\draw (31,2) node {$\Gamma_3$};
\draw (11,14.5) node {$\Gamma_4$};
\draw [fill=black] (11.56,0) circle (2pt);
\draw (12.3,2) node {$t$};
\draw [fill=black] (7.82,7.84) circle (2pt);
\draw (7,9.8) node {$z$};
\end{tikzpicture}
\end{center}
\caption{The contour $\Gamma$}
\label{fig:contour_gamma}
\end{figure}
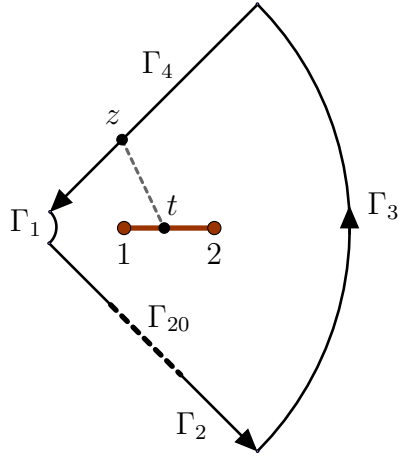

\medskip
Here are the lemmas
used in the proof of Proposition~\ref{prop:alpha}.

\begin{lemma}
\label{lem:est-al-xi}
Let us fix $n_0\in\ZZ_+$.
Suppose that 
\beq{cond_for_alxi_estimate}
|\alpha_{n_0}|^{1/2}-|\xi_{n_0}|^{-1/2}>1
\eeq
and $a>1$ is such that
$$
1+|\xi_{n_0}|^{-1/2}\leq a< |\alpha_{n_0}|^{1/2}
$$
Then the inequalities $|\alpha_{n}|>a$ and 
$|\xi_{n}|\geq|\xi_{n_0}|a^{n-n_0}$ hold true for all $n\geq n_0$.
Consequently,
$$
 \alpha_\infty=\lim_{n\to\infty} \alpha_n
$$ 
exists and $|\alpha_\infty|\geq a$.

The differences $\delta_{n,m}$ defined in \eqref{deltamn} satisfy the estimate
\beq{estdeltan}
|\delta_{n,m}|\leq|\xi_{n_0}|^{-1}\frac{a^{n_0-n}}{1-a^{-1}}.
%\leq (a+1)a^{n_0-n+1}.
\eeq
The same upper bound holds for $\delta_n$ defined in \eqref{deltan}.
\end{lemma}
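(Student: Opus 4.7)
The plan is to prove the two bounds $|\alpha_n|>a$ and $|\xi_n|\geq|\xi_{n_0}|\,a^{n-n_0}$ by simultaneous induction on $n\geq n_0$, using the recurrences $\alpha_{j+1}=\alpha_j+\xi_j^{-1}$ and $\xi_{j+1}=\alpha_{j+1}\xi_j$ from \eqref{recrelalxi}. The base case $n=n_0$ is immediate: the bound $|\alpha_{n_0}|>a^2>a$ holds because $a<|\alpha_{n_0}|^{1/2}$ and $a>1$, while the $\xi$-bound is a tautology.

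The multiplicative recurrence makes the $\xi$-bound trivial in the induction step: if $|\alpha_{k}|>a$ for all $n_0<k\leq n+1$ and $|\xi_n|\geq|\xi_{n_0}|a^{n-n_0}$, then
$$|\xi_{n+1}|=|\alpha_{n+1}|\,|\xi_n|> a\,|\xi_{n_0}|a^{n-n_0}=|\xi_{n_0}|a^{n+1-n_0}.$$
So the whole proof reduces to maintaining $|\alpha_n|>a$. For that I would not argue one step at a time (since $1/\xi_n$ can have arbitrary phase), but instead telescope: assuming the $\xi$-bound holds for all indices up to $n-1$, the triangle inequality and the geometric sum give
$$|\alpha_n|\geq|\alpha_{n_0}|-\sum_{j=n_0}^{n-1}|\xi_j|^{-1}\geq|\alpha_{n_0}|-|\xi_{n_0}|^{-1}\sum_{j=0}^{\infty}a^{-j}=|\alpha_{n_0}|-\frac{a}{(a-1)|\xi_{n_0}|}.$$
It then suffices to show that this lower bound strictly exceeds $a$. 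Rearranging, the required inequality is $(|\alpha_{n_0}|-a)(a-1)>a/|\xi_{n_0}|$. The hypothesis $a<|\alpha_{n_0}|^{1/2}$ gives $|\alpha_{n_0}|-a>a(a-1)$, while $a\geq 1+|\xi_{n_0}|^{-1/2}$ gives $(a-1)^2\geq|\xi_{n_0}|^{-1}$, so $(|\alpha_{n_0}|-a)(a-1)>a(a-1)^2\geq a/|\xi_{n_0}|$, closing the induction.

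The remaining claims follow formally. For the $\delta$-estimate, the triangle inequality plus the $\xi$-bound just proved gives
$$|\delta_{n,m}|\leq\sum_{j=n}^{m-1}|\xi_j|^{-1}\leq|\xi_{n_0}|^{-1}a^{n_0-n}\sum_{k=0}^{m-1-n}a^{-k}\leq\frac{|\xi_{n_0}|^{-1}a^{n_0-n}}{1-a^{-1}},$$
and the same computation with $m=\infty$ handles $\delta_n$. Finally, this bound shows $(\alpha_n)$ is Cauchy, so $\alpha_\infty=\lim\alpha_n$ exists; passing to the limit in $|\alpha_n|>a$ yields $|\alpha_\infty|\geq a$. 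The only real content is the telescoping step; I don't expect any genuine obstacle, just care in chaining the two scalar inequalities provided by the hypotheses on $a$.
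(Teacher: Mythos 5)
Your proof is correct and follows essentially the same route as the paper: a simultaneous induction where the $\alpha$-bound is maintained by telescoping $|\alpha_n|\geq|\alpha_{n_0}|-\sum_{j\geq n_0}|\xi_j|^{-1}$, bounding the tail by a geometric series, and invoking the two scalar hypotheses $|\alpha_{n_0}|>a^2$ and $|\xi_{n_0}|^{-1}\leq(a-1)^2$. The paper performs the same computation (writing it as $a^2-\frac{(a-1)^2}{1-a^{-1}}=a$) and derives the $\delta$-estimate by the same geometric-series argument.
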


\begin{proof}
The inequalities $|\alpha_{n}|>a$ and 
$|\xi_{n}|\geq|\xi_{n_0}|a^{n_0-n}$ follow by induction as soon as
we show that
$|\alpha_{n_0}|-\sum_{j=0}^\infty |\xi_{n_0}|^{-1} a^{-j}\geq a$.
And this is true, since
$$
|\alpha_{n_0}|-\sum_{j=0}^\infty |\xi_{n_0}|^{-1} a^{-j}\geq a^2-\frac{(a-1)^2}{1-a^{-1}}=a.
$$

The estimate \eqref{estdeltan} follows by a similar calculation.
\end{proof}

\begin{lemma}
\label{lem:raw-est-contour}
Consider the contour in the complex plane $\Gamma=\Gamma_1\cup\Gamma_2\cup\Gamma_3\cup\Gamma_4$, 
where $\Gamma_1$ and $\Gamma_3$ are the quarter-circle arcs (in polar coordinates
$z=re^{i\phi}$) 
$\Gamma_1$: $r=0.25$, $|\phi|<\pi/4$,
$\Gamma_3$: $r=3.5$, $|\phi|<\pi/4$,
$\Gamma_2$ is the linear segment
$0.25\leq r\leq 3.5$, $\phi=-\pi/4$, and
$\Gamma_4$ is complex-conjugate to $\Gamma_2$. {\rm (Fig.~\ref{fig:contour_gamma})}

%For any $t\in[1,2]$ and any $z\in\Gamma$ we have
%$$
% |z-t|\geq \frac{t}{\sqrt{2}}.
%$$

Furthermore, define the subset of $\Gamma_2$,
$$
 \Gamma_{20}=\Gamma_2\cap \{z\,:\,1.15\leq |z|\leq 2.3\}
$$
and the corresponding subset $\Gamma_{40}\subset\Gamma_4$. 

The following numerical estimates hold true:

$$
\ba{lll}
z\in\Gamma_1:\quad &
 |\alpha_4(z)|^{1/2}>2.2, & \quad |\xi_4(z)|^{-1/2}<0.1; \\[1ex]
z\in\Gamma_3:\quad &
 |\alpha_4(z)|^{1/2}>1.85, & \quad |\xi_4(z)|^{-1/2}<0.05; \\[1ex]
z\in\Gamma_{20}:\quad &
 |\alpha_4(z)|^{1/2}>1.31, & \quad |\xi_4(z)|^{-1/2}<0.28; 
\\[1ex]
z\in\Gamma_2\setminus\Gamma_{20}:\quad &
 |\alpha_4(z)|^{1/2}>1.47, & \quad |\xi_4(z)|^{-1/2}<0.27.
\ea
$$
On $\Gamma_{40}$ and $\Gamma_4\setminus\Gamma_{40}$
the estimates are the same as on $\Gamma_{20}$ and $\Gamma_2\setminus\Gamma_{20}$.
\end{lemma}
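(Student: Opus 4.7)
The proof is purely a numerical verification, since $\alpha_4(z)$ and $\xi_4(z)$ are explicit rational functions of $z$ with real coefficients, computable via the recurrence \eqref{recrelalxi}. My plan is to reduce each claimed inequality to a one-variable minimization of $|\alpha_4|$ or $|\xi_4|^{-1}$ over a compact interval (parametrized by $\phi$ on the arcs and by $r$ on the rays), and then verify by direct evaluation that the claimed bound holds with a comfortable margin.

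First, since $\alpha_n$ and $\xi_n$ are rational with real coefficients, $|\alpha_n(\bar z)|=|\alpha_n(z)|$ and likewise for $\xi_n$, so the estimates on $\Gamma_4$ follow from those on $\Gamma_2$, and it suffices to consider the four pieces $\Gamma_1$, $\Gamma_3$, $\Gamma_{20}$, $\Gamma_2\setminus\Gamma_{20}$. For each, parametrize the contour by a single real variable $s$ (either $\phi$ for the arcs or $r$ for the ray with $\phi=-\pi/4$). The functions $s\mapsto|\alpha_4(z(s))|$ and $s\mapsto|\xi_4(z(s))|$ are then real-analytic on the relevant closed interval, and finding their minima is a standard numerical problem. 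I would carry out this minimization (as indicated in Fig.~\ref{fig:CAScode}) for each of the eight inequalities and compare the outcome with the stated threshold.

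To make the numerics rigorous, the standard device is to exploit the fact that the minima are known to a few digits of accuracy while the claimed bounds are stated with healthy margin (the gap between the computed minima and the posted thresholds is of order $10^{-2}$ or more; for instance, on $\Gamma_3$ one gets $|\alpha_4|^{1/2}\geq\sqrt{3.436\ldots}>1.853$, well above $1.85$). Thus it is enough to evaluate $|\alpha_4|$ and $|\xi_4|$ on a sufficiently fine grid of the parameter $s$, together with an a priori Lipschitz estimate derived from $\max|\alpha_4'(z)|$ and $\max|\xi_4'(z)|$ on the contour (these in turn are crude because $\alpha_4, \xi_4$ are bounded rational functions whose derivatives are also bounded rational functions on a set keeping a positive distance from their poles). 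The spacing of the grid can be chosen so that the Lipschitz error is well below the observed margin.

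The main obstacle is therefore not conceptual but bookkeeping: one has to ensure that the grid resolution times the Lipschitz constant is much smaller than the observed slack, for all eight quantities, and to check that the behaviour on the short arc $\Gamma_1$ (near which $z$ is small and $|\xi_4(z)|$ might be feared to drop) is actually harmless --- this is the place where splitting $\Gamma_2$ into $\Gamma_{20}$ and its complement is needed, because the minimum of $|\alpha_4|$ along the ray is attained in $\Gamma_{20}$ and the two different thresholds precisely reflect this. With the split, the verification on each of the eight subarcs is routine and can be displayed by a short computer algebra table of the minima, in the spirit of Fig.~\ref{fig:CAScode}. \qed
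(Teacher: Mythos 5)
Your proposal is correct and takes essentially the same route as the paper: the paper's proof is literally "computation," pointing to the Maple worksheet in Fig.~\ref{fig:CAScode} which performs exactly the one-parameter numerical minimizations you describe, and the paper's own remarks at the start of Appendix~B explicitly concede the grid/interval-analysis issue and wave it off as non-critical. Your additional observations — the $\bar z\leftrightarrow z$ symmetry reducing $\Gamma_4$ to $\Gamma_2$, and the Lipschitz-plus-grid argument to make the numerics rigorous — are sound and consistent with the paper's intent, but they do not constitute a genuinely different method.
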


\begin{proof}[Proof:] computation. (E.g.: compare the numbers in the line ``$z\in\Gamma_3$'' above and in the last two lines of the listing, Fig.~\ref{fig:CAScode}.)
\end{proof}

\begin{lemma}
\label{lem:alphaD}
The functional sequence $\{\alpha_n(z)\}$ converges uniformly in the region $D=\{z\mid 0.25\leq|z|\leq 3.5,\;|\arg z|\leq\pi/4\}$, whose boundary is $\Gamma$,
and its limit $\alpha_\infty(z)$ is analytic in the interior of $D$.
\end{lemma}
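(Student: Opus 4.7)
The plan is to combine Lemma~\ref{lem:est-al-xi} applied with $n_0=4$ (which yields uniform convergence of $\{\alpha_n\}$ on the boundary $\Gamma$) with the maximum modulus principle (which transfers this convergence to $\bar D$ and delivers analyticity of the limit in the interior).

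\textbf{Step 1 (uniform Cauchy on $\Gamma$).} I would first verify that hypothesis~\eqref{cond_for_alxi_estimate} of Lemma~\ref{lem:est-al-xi} holds uniformly on $\Gamma$ with a single exponent $a>1$. Reading off Lemma~\ref{lem:raw-est-contour}, the tightest inequality occurs on $\Gamma_{20}\cup\Gamma_{40}$, where $|\alpha_4(z)|^{1/2}-|\xi_4(z)|^{-1/2}>1.31-0.28=1.03$; on the remaining pieces of $\Gamma$ the gap exceeds $1.20$. Choosing $a=1.30$ gives $1+|\xi_4(z)|^{-1/2}\le a<|\alpha_4(z)|^{1/2}$ at every $z\in\Gamma$. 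Lemma~\ref{lem:est-al-xi} then furnishes, uniformly on $\Gamma$, the bounds $|\xi_n(z)|\ge|\xi_4(z)|a^{n-4}$ and $|\delta_{4,n}(z)|\le Ca^{-n}$ for some constant $C$ depending only on $\min_{\Gamma}|\xi_4|$ (which is positive by Lemma~\ref{lem:raw-est-contour}). Hence $\{\alpha_n\}$ is uniformly Cauchy on $\Gamma$ and converges uniformly to a continuous limit.

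\textbf{Step 2 (analyticity in $D$).} Each $\alpha_n$ is a rational function of $z$, and I would show that each is holomorphic on $\bar D$. Via the telescoping $\xi_n=\prod_{k=1}^{n}\alpha_k$ and the recursion $\alpha_{n+1}=\alpha_n+\xi_n^{-1}$, the only poles of $\alpha_n$ in $\mathbb{C}$ come either from the initial denominators $z$ and $z^2+1$ appearing in $\alpha_1,\ldots,\alpha_4$ (whose zeros $0,\pm i$ lie outside $\bar D$ because $|\arg z|\le\pi/4$ excludes $\pm i$) or from zeros of earlier $\alpha_k$ in $\bar D$. I would therefore prove by induction that each $\alpha_n$ is nonvanishing on $\bar D$: the base cases $n\le 4$ follow by direct inspection of the explicit rational expressions displayed in the table after~\eqref{recreleta}, and the inductive step combines the boundary inequality $|\alpha_n|>a>1$ on $\Gamma$ (which excludes boundary zeros) with Rouch\'e's theorem applied to $\alpha_n$ and $\alpha_4$. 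Indeed, on $\Gamma$ one has $|\alpha_n-\alpha_4|=|\delta_{4,n}|\le Ca^{-n}$ while $|\alpha_4|\ge 1.31^2>1.7$, so for all sufficiently large $n$ the Rouch\'e inequality $|\alpha_n-\alpha_4|<|\alpha_4|$ holds on $\Gamma$ and $\alpha_n$ inherits the zero-count of $\alpha_4$, namely zero; the finitely many intermediate indices between $5$ and this threshold can be checked termwise. Once each $\alpha_n$ is holomorphic on $\bar D$, the differences $\alpha_n-\alpha_m$ are holomorphic on $D$ and continuous on $\bar D$, so the maximum modulus principle converts uniform smallness on $\Gamma$ into uniform smallness on $\bar D$, and Weierstrass's theorem identifies the uniform limit $\alpha_\infty$ as holomorphic in the interior of $D$.

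\textbf{Main obstacle.} The real technical work is Step~2: boundary estimates do \emph{not} by themselves exclude zeros of $\xi_n$ inside $\bar D$ (the minimum modulus principle converts existing nonvanishing into boundary attainment of the minimum, not the other way around), so a topological zero-counting argument via Rouch\'e's theorem is required, and this in turn demands a separate handling of the low-order cases. The quantitative margin in Step~1, tight as it is on $\Gamma_{20}\cup\Gamma_{40}$ with a gap of only $\approx 0.03$ above $1$, constrains the reference function that may be used in Rouch\'e and explains why the contour $\Gamma$ had to be tuned so carefully in Lemma~\ref{lem:raw-est-contour}.
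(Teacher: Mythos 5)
Your argument follows the same route the paper takes: uniform convergence on $\Gamma$ via Lemma~\ref{lem:est-al-xi}, then analyticity of each $\alpha_n$ inside $D$ (no poles, and no zeros by Rouch\'e comparison with $\alpha_4$), and finally the maximum principle plus Weierstrass to get analyticity of $\alpha_\infty$. One quantitative claim is off, though it does not break the proof: $|\delta_{4,n}(z)|$ is \emph{not} $O(a^{-n})$ --- the estimate \eqref{estdeltan} with $n_0=4$ gives only the $n$-independent bound $|\delta_{4,n}|\le|\xi_4|^{-1}(1-a^{-1})^{-1}$, since $\delta_{4,n}$ is an increasing partial sum --- but that constant is already below $\min_{\Gamma}|\alpha_4|$, so the Rouch\'e inequality $|\alpha_n-\alpha_4|<|\alpha_4|$ in fact holds on $\Gamma$ for \emph{every} $n>4$ and the ``threshold plus finitely many intermediate indices'' step is unnecessary.
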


\begin{proof}
The 
condition \eqref{cond_for_alxi_estimate}
is fulfilled on $\Gamma$ with $n_0=4$ according to
Lemma \ref{lem:raw-est-contour}. (We can even take the common value $a=0.29$ for all $z\in\Gamma$.)
The uniform convergence for $z\in\Gamma$ follows by Lemma~\ref{lem:est-al-xi} and for all $z\in D$.

To conclude that the sequence $\{\alpha_n(z)\}$ uniformly converges in $D$, it remains to show (due to the maximum principle) that the rational function $\alpha_n(z)$ is analytic for each $n\geq 4$ inside $\Gamma$. %, i.e.\ that it does not have poles.

A straightforward though a bit tedious 
check shows that the denominator of 
%the rational function 
$\alpha_4(z)$, equal to 
$$
z(z^2+1)(z^4+2z^2+z+1)(z^8+4z^6+2z^5+7z^4+4z^3+6z^2+2z+1),
$$
does not have roots in $D$. (The index of each polynomial factor along $\Gamma$ equals $0$.)
For the same reason, $\xi_3(z)$ does not have poles and zeros in $D$. 

Consider the induction hypothesis: $\alpha_n(z)$ and $\xi_{n-1}(z)$ do not have zeros and poles in $D$.  
The induction step $n\to n+1$ goes as follows.
By \eqref{recrelalxi}, $\xi_{n}(z)$ does not have zeros and poles in $D$, then $\alpha_{n+1}(z)$ does not have poles in $D$. Finally, 
the proof of Lemma~\ref{lem:est-al-xi} shows that
$|\delta_{4,n}(z)|<|\alpha_4(z)|$ on $\Gamma$ for any $n>4$. By Rouch\'{e}'s theorem,
$\alpha_n(z)$ also does not have zeros in $D$.
\end{proof}

For the purpose of integration in the proof of Proposition~\ref{prop:alpha} we assume 
that the contour $\Gamma$ is positively (i.e.\ counterclockwise) oriented. The estimates for complex integrals are based on the following bound for a real integral.

\begin{lemma}
\label{lem:estB}
The quantity $B$ defined in 
\eqref{deltamax}--\eqref{deltaB} is estimated as
$$
 B<2.48.
$$
The quantity $B_1$ defined in \eqref{B1} is estimated as
$$
 B_1<4.2.
$$
\end{lemma}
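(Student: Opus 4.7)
The plan is to control $\delta_4^*(z)$ pointwise on $\Gamma$ via Lemma~\ref{lem:est-al-xi}, then integrate piece by piece using the piecewise bounds on $|\alpha_4(z)|$ and $|\xi_4(z)|$ that Lemma~\ref{lem:raw-est-contour} already supplies. Specifically, on any arc where a value $a=a(z)$ satisfies $1+|\xi_4(z)|^{-1/2}\leq a<|\alpha_4(z)|^{1/2}$, the inductive proof of Lemma~\ref{lem:est-al-xi} yields the pointwise bound
$$
\delta_4^*(z)=\sum_{j=4}^\infty\frac{1}{|\xi_j(z)|}\leq \frac{1}{|\xi_4(z)|\,(1-a^{-1})},
$$
since $|\xi_j(z)|\geq |\xi_4(z)|\,a^{j-4}$ for $j\geq 4$. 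On each subarc I will fix a single admissible $a$ and a uniform upper bound on $|\xi_4(z)|^{-1}$, producing a constant upper bound for $\delta_4^*(z)$ on that piece.

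For $B$, I split $\Gamma$ into six pieces: $\Gamma_1$, $\Gamma_3$, $\Gamma_{20}$, $\Gamma_{40}$, $\Gamma_2\setminus\Gamma_{20}$, $\Gamma_4\setminus\Gamma_{40}$. The respective arc lengths are $\pi/8$, $7\pi/4$, $23/20$, $23/20$, $21/10$, $21/10$. Using the bounds in Lemma~\ref{lem:raw-est-contour} I take, for instance, $a\approx 2.1$ on $\Gamma_1$, $a\approx 1.8$ on $\Gamma_3$, $a\approx 1.30$ on $\Gamma_{20}\cup\Gamma_{40}$ (where $|\xi_4|^{-1/2}<0.28$ forces $a\geq 1.28$), and $a\approx 1.45$ on the remaining pieces of $\Gamma_2\cup\Gamma_4$. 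Multiplying each resulting uniform bound for $\delta_4^*$ by the corresponding arc length and summing gives a value comfortably below $2.48$; the dominant contributions come from $\Gamma_{20}\cup\Gamma_{40}$ and from $(\Gamma_2\cup\Gamma_4)\setminus(\Gamma_{20}\cup\Gamma_{40})$, each contributing less than one.

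For $B_1$ I use the same decomposition and additionally bound $|z-1|^{-2}$ uniformly on each piece. On $\Gamma_1$ and $\Gamma_3$ the estimates $|z-1|\geq 3/4$ and $|z-1|\geq 5/2$ are immediate. On $\Gamma_{20}$, parametrizing $z=re^{-i\pi/4}$ with $r\in[1.15,2.3]$, one has $|z-1|^2=r^2-r\sqrt2+1\geq 1.15^2-1.15\sqrt2+1>0.69$, so $|z-1|^{-2}<1.45$. The delicate pieces are the inner and outer portions of $\Gamma_2\setminus\Gamma_{20}$ (and their conjugates), where $|z-1|$ can drop to $1/\sqrt2$ near $r=\sqrt2/2$; I split these further into the subarcs $r\in[0.25,1.15]$ and $r\in[2.3,3.5]$, using $|z-1|^{-2}\leq 2$ on the former and $|z-1|^{-2}\leq 1/3$ on the latter. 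Summing the resulting products against the already-established bounds for $\delta_4^*$ keeps the total below $4.2$.

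The principal technical obstacle is tightness on $\Gamma_{20}\cup\Gamma_{40}$, where $|\xi_4|^{-1}$ is largest and $|\alpha_4|^{1/2}$ smallest, so that $a$ is close to $1$ and the geometric-series factor $(1-a^{-1})^{-1}$ is large. If the direct estimate is not tight enough for either $B$ or $B_1$, I would peel off the first few terms of $\delta_4^*(z)$ and evaluate $\sum_{j=4}^{N}|\xi_j(z)|^{-1}$ from the explicit rational expressions (via Lemma~\ref{lem:raw-est-contour}-type minimizations over the parameter), applying the geometric bound only to the tail $\sum_{j>N}|\xi_j(z)|^{-1}$. This finer accounting, if needed, preserves the structure of the argument but shrinks the critical constants on $\Gamma_{20}\cup\Gamma_{40}$.
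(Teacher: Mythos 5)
Your proposal is correct and follows essentially the same route as the paper: bound $\delta_4^*(z)$ pointwise by a geometric series via Lemma~\ref{lem:est-al-xi}, feed in the piecewise data from Lemma~\ref{lem:raw-est-contour}, and integrate arc by arc, with the $B_1$ estimate obtained by further bounding $|z-1|^{-2}$ on each piece. The only cosmetic differences are that the paper inserts an extra (conservative) factor $\sqrt{2}$ in the lengths of $\Gamma_2,\Gamma_4$ pieces and, for $B_1$, splits $\Gamma_2\setminus\Gamma_{20}$ at $|z|=\sqrt{2}$ using the cruder bounds $|z-1|^2>1/2$ and $|z-1|>1$, whereas you split at $r=1.15$ and $r=2.3$ with slightly sharper constants; both yield the stated inequalities.
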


\begin{proof}
We refer to the numerical data of Lemma~\ref{lem:raw-est-contour}.
For $z\in\Gamma_1$ we take $a=2.2$; then by
the estimate \eqref{estdeltan} of Lemma~\ref{lem:est-al-xi} $\delta_4^*(z)<0.1^2/(1-2.2^{-1})<0.02$
and
$$
 \int_{\Gamma_1}\delta_4^*(z)\,|dz|<\frac{\pi}{2}\cdot 0.25\cdot 0.02<0.008.
$$
For $z\in\Gamma_3$ we put $a=1.8$ and obtain
$$
 \int_{\Gamma_3}\delta_4^*(z)\,|dz|<\frac{\pi}{2}\cdot 3.5\cdot \frac{0.05^2}{1-1.85^{-1}}<0.03.
$$ 
For $z\in\Gamma_{20}$ we put $a=1.3$ and obtain
$$
 \int_{\Gamma_{20}}\delta_4^*(z)\,|dz|<(2.3-1.15)\sqrt{2}\cdot \frac{0.28^2}{1-1.31^{-1}}<0.54.
$$ 
For $z\in\Gamma_{2}\setminus\Gamma_{20}$ we put $a=1.47$ and obtain
$$
 \int_{\Gamma_{2}\setminus\Gamma_{20}}\delta_4^*(z)\,|dz|<(1.15-0.25+3.5-2.3)\sqrt{2}\cdot \frac{0.27^2}{1-1.47^{-1}}<0.68.
$$ 
The latter two estimates apply also to $\Gamma_{40}$
and $\Gamma_{4}\setminus\Gamma_{40}$ respectively.
Therefore
$$
\int_\Gamma\delta_4^*(z)\,|dz|<0.008+0.03+2\cdot(0.54+0.68)
<2.48.
$$

To obtain the estimate for $B_1$, we take into account the inequality $|z-1|^2>1/2$, $z\in\Gamma$, 
and its (very crude) improvement $|z-1|>1$ on the part $\{z\mid |z|>\sqrt{2}$ of $\Gamma$.
In fact, we employ the latter improvement just on the corresponding parts of $\Gamma_2\setminus\Gamma_{20}$
and $\Gamma_4\setminus\Gamma_{40}$. 
We have
$$
 \int_{\Gamma_2\setminus\Gamma_{20}}
 \frac{\delta_4^*(z)}{|z-1|^2}\,|dz|<
 \left(\frac{1.15-0.25}{1/2}+\frac{3.5-2.3}{1}\right)
\sqrt{2}\cdot\frac{0.27^2}{1-1.47^{-1}}<0.97.
$$
We arrive at the claimed estimate:
$$
 B_1<2(0.008+0.03+2\cdot 0.54)+2\cdot 0.97 <4.2.
\eqno\qedhere
$$
\end{proof}

\end{document}